\newtheorem{theorem}{Theorem}[section]
\newtheorem{lemma}[theorem]{Lemma}
\newtheorem{proposition}[theorem]{Proposition}
\theoremstyle{definition}
\newtheorem{definition}[theorem]{Definition}
\newtheorem{example}[theorem]{Example}
\newtheorem{corollary}[theorem]{Corollary}
\newtheorem*{definition*}{Definition}
\theoremstyle{remark}
\newtheorem{remark}[theorem]{Remark}
\newcommand{\calA}{{\mathcal{A}}}
\newcommand{\calB}{{\mathcal{B}}}
\newcommand{\calH}{{\mathcal{H}}}
\newcommand{\scrD}{{\mathscr D}}
\newcommand{\ip}[1]{\left\langle #1 \right\rangle}
\newcommand{\norm}[1]{\left\lVert #1 \right\rVert}
\newcommand{\abs}[1]{\left\lvert #1 \right\rvert}
\renewcommand{\le}{\leqslant}
\renewcommand{\ge}{\geqslant}
\newcommand{\R}{\mathbb R}
\newcommand{\Z}{\mathbb Z}
\newcommand{\C}{\mathbb C}
\newcommand{\D}{\mathbb D}
\newcommand{\T}{\mathbb T}
\numberwithin{equation}{section}
\begin{document}

\title[Weighted Dirichlet type spaces with poly-superharmonic weights]{Higher order weighted Dirichlet type spaces with poly-superharmonic weights and Dirichlet type operators of finite order}

\author{Ashish Kujur}
\address{School of Mathematics \\
Indian Institute of Science Education and Research \\
Thiruvananthapuram.}
\email{ashishkujur23@iisertvm.ac.in}
\thanks{}

\author{Md. Ramiz Reza}
\address{School of Mathematics \\
Indian Institute of Science Education and Research \\
Thiruvananthapuram.}
\email{ramiz@iisertvm.ac.in}
\thanks{}

\keywords {Poly-superharmonic weights, Weighted Dirichlet-type integrals, Littlewood-Paley formula, completely hyperexpansive operators, completely hypercontractive operators}
\subjclass[2010]{Primary 47A45, 47B38, 47B32 Secondary 46E20, 31C25 }

\begin{abstract}
We study higher-order weighted Dirichlet-type spaces on the unit disc associated with a class of poly-superharmonic weights. A higher-order Littlewood–Paley formula is established enabling the computation of higher-order weighted Dirichlet integrals and allowing us to relate iterates of the Laplacian of the weight to higher-order defect operators of the shift operator on these spaces. This leads to the introduction of Dirichlet-type operators of finite order, a class containing $m$-isometries as well as completely hyperexpansive and completely hypercontractive operators of finite order. We prove that every cyclic operator in this class admits a functional model as the shift on a suitable higher-order weighted Dirichlet-type space, thereby providing a unified extension of the model theories for cyclic completely hyperexpansive operators and cyclic $m$-isometries.
\end{abstract}

\maketitle

\section{Introduction and main results}

Let $\D$ denote the open unit disc, $\T$ the unit circle in the complex plane $\C$, and for $x\in \R$, let $\mathbb Z_{\geqslant x} := \mathbb Z \cap [x,\infty).$ We write $\operatorname{Hol}(\D)$ for the space of holomorphic functions on $\D$. For $k \in \mathbb Z_{\geqslant 1}$, a non-negative integrable function $w$ on $\D$, and $f\in \operatorname{Hol}(\D)$, the \emph{weighted Dirichlet integral of order $k$} is defined by  
\[
D_{w,k}(f) := \int_{\D} |f^{(k)}(z)|^2 w(z)\, dA(z),
\]
where $f^{(k)}$ denotes the $k$-th derivative of $f$ and $dA=\frac{1}{\pi} dx dy$ the normalized Lebesgue area measure on $\D$. The associated weighted Dirichlet-type space $\mathscr D_{w,k}$ consists of those $f\in \operatorname{Hol}(\D)$ with $D_{w,k}(f)<\infty$.

Weighted Dirichlet-type spaces $\mathscr D_{w,k}$ provide a flexible function-theoretic framework that has been extensively studied in connection with classical analytic function spaces and operator theory. For $k=1,$ such spaces arise naturally in the work of Richter \cite{Richter91}, where weights given by positive harmonic functions are used to model cyclic analytic $2$-isometries; see also \cite{RS1991LDI, RSMULTINV, SARA1997, Primerbook}.  This framework was subsequently extended by Aleman \cite{1993multiplication}, who considered weights given by positive superharmonic functions and showed that the corresponding spaces yield models for cyclic, analytic, completely hyperexpansive operators; see also \cite{zbMATH01797590, zbMATH06544404, MR2019HM, zbMATH07984371, zbMATH07962947} and references therein for works on super-harmonically weighted Dirichlet spaces. Recently, Rydhe \cite{Rydhe19} demonstrated that weighted Dirichlet spaces $\mathscr D_{w,k}$ weighted by polyharmonic functions play a central role in the model theory of cyclic $m$-isometries, showing that polyharmonic weights encode the structure of higher-order isometric behavior in a natural and intrinsic way, see also \cite{LuoRichter,GGR22,GGRLDF,LuoRydhe25}. Motivated by this progression, we initiate the study of poly-superharmonically weighted Dirichlet-type spaces for arbitrary orders of $k\geqslant 1.$ More specifically, we consider positive $k$-superharmonic weights $w,$ that is,
\begin{align*}
   (-1)^k\triangle_z^k(w(z)) \geqslant 0 \qquad \mbox{(in the distributional sense)},    
\end{align*}
where $\triangle_z$ denotes the complex Laplacian, and investigate the associated spaces $\mathscr D_{w,k}.$ 

In what follows, we provide examples of classical function spaces which arise as a weighted Dirichlet-type space $\mathscr D_{w,k}$ induced by poly-superharmonic weights. For each $\alpha \in \mathbb R,$ let $\mathscr D_{\alpha}$ denote the subspace of $\operatorname{Hol}(\D)$ given by
\begin{equation}
\mathscr D_{\alpha}:= \bigg\{f=\sum_{k=0}^\infty a_k z^k \,\in \operatorname{Hol}(\D)  \, \Big| \,  \|f\|^2_{_{\mathscr D_{\alpha}}} := \sum_{k=0}^\infty(k+1)^{\alpha} |a_k|^2 < \infty \bigg\},
\label{eq:D-alpha}
\end{equation}
see \cite{Taylor66} for a study of $\mathscr D_{\alpha}$ spaces. For $\alpha <0,$ an equivalent norm on  $\mathscr D_{\alpha}$ is given by 
\begin{align*}
    \|f\|^2:= \int_{\mathbb D} |f(z)|^2 (1-|z|^2)^{-\alpha-1}dA(z), 
\end{align*}
see \cite[Lemma 2]{Taylor66}. Now, consider $\alpha >0$ and assume that $\alpha\in (k-1,k]$ for some $k\in \mathbb Z_{\geqslant 1}.$ Since $f\in \mathscr D_{\alpha}$ if and only if $f'\in \mathscr D_{\alpha -2},$ it follows that $f\in \mathscr D_{\alpha}$ if and only if 
\begin{align*}
    \int_\mathbb D |f^{(k)}(z)|^2 (1-|z|^2)^{2k-\alpha-1} dA(z) < \infty.
\end{align*} This gives us that $\mathscr D_{\alpha}= \mathscr D_{w,k}$ as a set where $w$ is given by $w(z)= (1-|z|^2)^{2k-\alpha-1},$ for $z\in\D.$ A straightforward computation will give us that $w$ is a positive $k$-superharmonic function on $\D$ for $\alpha\in (k-1,k].$ Moreover for $\alpha=k,$ the function $w$ is a $k$-harmonic function on $\D,$ that is, $\triangle_z^{k}(w(z))=0$ on $\D.$ This motivates us to study the higher order weighted Dirichlet spaces $\mathscr D_{w,k},$ where $w$ is a positive $k$-superharmonic function on $\D.$

A fundamental class of $k$-superharmonic function on $\D$ is given by the generalized Green potentials associated with measures supported on $\overline{\mathbb D}=\D \cup \T.$ Let $\mu$ be a finite positive Borel measure supported on $\overline{\D}$, and decompose $\mu=\mu_1+\mu_2$ with $\mu_1=\mu|_{\D}$ and $\mu_2=\mu|_{\T}$. The corresponding generalized Green potential $U_{\mu,k}$ is defined by
\begin{align*}
    U_{\mu,k}(z) &:= \frac{1}{((k-1)!)^{2}} 
    \int_{\D} \frac{(-1)^{k+1} G_k(z,\zeta)}{(1-|\zeta|^{2})^{k}}\, d\mu_1(\zeta) 
    + \frac{1}{k!(k-1)!} \int_{\T} \frac{(1-|z|^{2})^{k}}{|z-\zeta|^{2}}\, d\mu_2(\zeta),\quad z\in \D,
\end{align*}
where $G_k(z,\zeta)$ is the polyharmonic Green–Almansi function
for $z, \zeta \in \overline{\mathbb{D}}$, $z \ne \zeta$  given by
\begin{align*}
G_k(z, \zeta) := |\zeta - z|^{2(k-1)} \log \left| \frac{1 - z\overline{\zeta}}{\zeta - z} \right|^2     + \sum_{l=1}^{k-1} \frac{(-1)^{l}}{l} |\zeta - z|^{2(k-1-l)} (1 - |\zeta|^2)^{l} (1 - |z|^2)^{l},
 \end{align*}
see \cite[Page 86]{Begehr07}. Then $U_{\mu,k}$ is a positive $k$-superharmonic function on $\D,$ and it is $k$-harmonic function on $\D$ if and only $\mu$ is supported on $\T,$ that is $\mu_1=0,$ see Section 4 for the details. 

In this article, we study properties of the weighted Dirichlet integral $D_{w,k}(f)$ and the associated weighted Dirichlet-type space $\mathscr D_{w,k}$ when $w=U_{\mu,k}.$ For notational convenience, we write $D_{\mu,k}(f)$ in place of $D_{w,k}(f)$ and $\mathscr D_{\mu,k}$ for the corresponding function space $\mathscr D_{w,k}$ when $w=U_{\mu,k}$.

 An important special case arises when $\mu=\delta_{\zeta},$ the Dirac measure at $\zeta\in \overline{\D}.$ In this case the corresponding weighted Dirichlet integral $D_{\delta_{\zeta},k}(f)$ is referred to as the \textit{local Dirichlet integral of order} $k$ for $f$ at $\zeta$ and the associated weighted Dirichlet space $\mathscr D_{\delta_{\zeta},k}$ is called the\textit{ local Dirichlet space of order} $k$ at $\zeta.$ For simplicity, we will often write $D_{\zeta,k}(f)$ and $\mathscr D_{\zeta,k}$ in place of $D_{\delta_{\zeta},k}(f)$ and $\mathscr D_{\delta_{\zeta},k}$ respectively. Finally, for a subset $X\subset \C$, we denote by $M_+(X)$ the class of all finite non-negative Borel measures supported on $X$.

For any finite positive Borel measure $\mu$ supported on $\overline{\D},$ by Fubini's Theorem, it follows that
\begin{align*}
     D_{\mu,k}(f)= \int_{\overline{\D}} D_{\zeta,k}(f) d\mu(\zeta).
\end{align*}
Thus, the study of weighted Dirichlet integrals $D_{\mu,k}(f)$ naturally reduces to the analysis of the local Dirichlet integrals $D_{\zeta,k}(f).$ For $\zeta\in \D,$ the local Dirichlet space $\mathscr D_{\zeta,1}$ coincides with the classical Hardy space $H^2,$ and by an application of Littlewood--Paley formula for the Hardy space, it follows that for any $f\in \operatorname{Hol} \left( \D \right),$ and $\zeta\in \D,$
\begin{align*}
  D_{\zeta,1}(f)= \left\| \frac{f(z)-f(\zeta)}{z-\zeta}\right \|_{H^2}^2,  
\end{align*}
see \cite[IV, Theorem 1.9]{1993multiplication}, \cite[VI, Lemma 3.1]{GarnettBAF}. Our first main result extends this identity to higher orders, yielding a generalized Littlewood–Paley formula for the higher order local Dirichlet integral. To state it, let $\sigma$ denotes the normalized Lebesgue arc-length measure on $\T$ and  set $D_{\sigma,0}(f):= \|f\|^2_{H^2}.$ 
\begin{theorem} \label{thm:Littlewood-Paley-Higher-Order}
    Let $k \in \mathbb Z_{\geqslant 1}$, $\zeta \in \mathbb D$ and $f\in \operatorname{Hol} \left( \D \right).$  Then we have that 
    \begin{equation*}
	       D_{\zeta, k} (f) =  D_{\sigma, k-1} \left( \frac{f(z)-f\left( \zeta \right)}{z-\zeta} \right).
    \end{equation*}
\end{theorem}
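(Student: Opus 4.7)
I would prove the identity in three stages: reduction to polynomials by density, reduction to $\zeta = 0$ by a Möbius change of variables, and direct verification at $\zeta = 0$ on monomials.

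Both $f \mapsto D_{\zeta,k}(f)$ and $f \mapsto D_{\sigma, k-1}(T_\zeta f)$, where $T_\zeta f(z) := (f(z)-f(\zeta))/(z-\zeta)$, are non-negative quadratic forms on $\operatorname{Hol}(\D)$. Approximating $f$ by its Taylor partial sums, uniform convergence on compacta together with Fatou's lemma reduces the identity to polynomial $f$, and by polarization further to the bilinear identity $D_{\zeta, k}(z^n, z^m) = D_{\sigma, k-1}(T_\zeta z^n, T_\zeta z^m)$ for all $n, m \geqslant 0$. Expanding $T_\zeta z^n = \sum_{j=0}^{n-1} \zeta^{n-1-j} z^j$ and using the Beta integral $\int_{\D} |z|^{2\alpha}(1-|z|^2)^\beta \, dA(z) = \alpha! \beta!/(\alpha + \beta + 1)!$ gives the RHS in closed form,
\[
D_{\sigma, k-1}(T_\zeta z^n, T_\zeta z^m) \;=\; \sum_{j=k-1}^{\min(n,m)-1} \binom{j}{k-1} \zeta^{n-1-j} \bar\zeta^{m-1-j}.
\]

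For the LHS, the key transformation is the identity $G_k(z, \zeta) = |1-z\bar\zeta|^{2(k-1)} \, G_k(\varphi_\zeta(z), 0)$, where $\varphi_\zeta(z) = (\zeta-z)/(1-z\bar\zeta)$ is the involution swapping $0$ and $\zeta$; this follows directly from $|\zeta - z|^2 = |1-z\bar\zeta|^2 \, |\varphi_\zeta(z)|^2$ and $(1-|\zeta|^2)(1-|z|^2) = |1-z\bar\zeta|^2(1-|\varphi_\zeta(z)|^2)$ substituted into the defining formula of $G_k$. Combined with the conformal factor $|\varphi_\zeta'(w)|^2 = (1-|\zeta|^2)^2/|1-w\bar\zeta|^4$ under $z = \varphi_\zeta(w)$ and the relation $(T_\zeta f)(\varphi_\zeta(z)) = -\frac{1-z\bar\zeta}{1-|\zeta|^2} \, T_0(f \circ \varphi_\zeta)(z)$, the substitution should reduce the identity to the case $\zeta = 0$. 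At $\zeta = 0$, rotational invariance makes both forms diagonal in $\{z^n\}$, reducing matters to verifying $D_{0, k}(z^n) = \binom{n-1}{k-1}$ for $n \geqslant k$. Using the explicit formula $G_k(z, 0) = |z|^{2(k-1)} \log(1/|z|^2) + \sum_{l=1}^{k-1} \frac{(-1)^l}{l} |z|^{2(k-1-l)}(1-|z|^2)^l$ together with $\int_{\D}|z|^{2j}\log(1/|z|^2)\,dA = 1/(j+1)^2$ and $\int_{\D}|z|^{2j}(1-|z|^2)^l\,dA = j!\,l!/(j+l+1)!$, this reduces to the combinatorial identity
\[
\frac{1}{n^2} + \sum_{l=1}^{k-1} \frac{(-1)^l (n-l-1)!\,(l-1)!}{n!} \;=\; \frac{(-1)^{k+1}(k-1)!\,(n-1)!\,(n-k)!}{(n!)^2},
\]
which I would verify by a short induction on $k$ (the inductive step being a Pascal-type telescoping).

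The main obstacle is the Möbius reduction in the second stage: the composition $f^{(k)}\circ\varphi_\zeta$ is, via Faà di Bruno, a linear combination of lower-order derivatives of $f$ with Möbius-dependent coefficients, and the transformation of $T_\zeta f$ on the RHS is similarly nontrivial; reconciling these chain-rule expansions with the clean $|1-z\bar\zeta|^{2(k-1)}$ scaling of $G_k$ is the delicate part. As a more computational but robust alternative, one can bypass the Möbius reduction entirely and verify the bilinear identity directly at general $\zeta$ by expanding the LHS integral via the explicit formula for $G_k$ in polar coordinates and matching term-by-term against the closed-form RHS obtained above.
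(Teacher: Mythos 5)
Your closed form for the right-hand side, the identity $D_{0,k}(z^n)=\binom{n-1}{k-1}$, and the two covariance formulas $G_k(z,\zeta)=|1-z\bar\zeta|^{2(k-1)}G_k(\varphi_\zeta(z),0)$ and $(T_\zeta f)\circ\varphi_\zeta=-\tfrac{1-z\bar\zeta}{1-|\zeta|^2}T_0(f\circ\varphi_\zeta)$ are all correct. The genuine gap is the M\"obius reduction itself, and it is more than ``delicate'': it fails for $k\geqslant 2$. After the substitution $z=\varphi_\zeta(w)$, the left-hand side becomes
\begin{equation*}
\frac{(1-|\zeta|^2)^k}{((k-1)!)^2}\int_{\D}\frac{|f^{(k)}(\varphi_\zeta(w))|^2}{|1-w\bar\zeta|^{2(k+1)}}\,(-1)^{k+1}G_k(w,0)\,dA(w),
\end{equation*}
and to identify this with $D_{0,k}$ of some auxiliary function one would need $(1-|\zeta|^2)^{k/2}f^{(k)}(\varphi_\zeta(w))/(1-w\bar\zeta)^{k+1}$ to be the $k$-th derivative of a holomorphic function. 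By Fa\`a di Bruno, $(f\circ\varphi_\zeta)^{(k)}(w)$ is a sum over $j=1,\ldots,k$ of $f^{(j)}(\varphi_\zeta(w))$ with distinct M\"obius weights; only the top term $j=k$ carries the factor you want, and the lower-order terms (for instance $f'(\varphi_\zeta(w))\cdot\bar\zeta/(1-w\bar\zeta)^3$ when $k=2$) do not disappear and do not absorb into $G_k(w,0)$. The case $k=1$ is special precisely because the chain rule produces a single term. On the right-hand side there is a parallel obstruction: $D_{\sigma,k-1}$ with its radial weight $(1-|w|^2)^{k-2}$ is not M\"obius-covariant for $k\geqslant 2$, so $D_{\sigma,k-1}(T_\zeta f)$ does not reduce to $D_{\sigma,k-1}(T_0(f\circ\varphi_\zeta))$ either. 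Thus the reduction to $\zeta=0$ that works at $k=1$ does not propagate, and the second stage of your argument cannot be carried out as stated.

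Your fallback (brute-force expansion of $\int_\D z^{n-k}\bar z^{m-k}G_k(z,\zeta)\,dA$ and term-by-term matching against the closed-form RHS) is viable in principle but not executed; the computation is substantial. For comparison, the paper avoids both the M\"obius reduction and the direct expansion by inducting on $k$: it uses the differential identity $\Delta_z G_k(z,\zeta)=(k-1)^2G_{k-1}(z,\zeta)-(k-1)H_{k-1}(z,\zeta)$ and an integration-by-parts lemma to peel one derivative off $D_{\zeta,k}(z^m,z^n)$, producing the $D_{\zeta,k-1}$ term (handled by the induction hypothesis) plus a correction integral against $H_{k-1}$. That correction integral is computed in closed form by recognizing it as an inner product in a weighted Bergman space with a telescoping series, and a short combinatorial identity (Pascal-type, Lemma 3.4) reconciles the two sides. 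This organizes essentially the same arithmetic you propose in your alternative, but in a way that stays at general $\zeta$ throughout and replaces the M\"obius bookkeeping with a one-step Laplacian reduction.
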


We remark that a corresponding identity also holds when $\zeta \in \mathbb{T}.$  In the case $k=1$, this is the classical \emph{local Douglas formula}, established by Richter and Sundberg \cite[Proposition 2.2]{RS1991LDI} (see also \cite[Proposition 1]{SARA1997}). Higher-order analogues for $\zeta \in \mathbb{T}$ were obtained by Ghara et al. in \cite[Theorem 1.1]{GGRLDF}. 

Using the higher-order Littlewood–Paley formula and the local Douglas formula, we next analyze the multiplication operator $M_z$ acting on the weighted Dirichlet-type space $\mathscr{D}_{\mu,k}.$
To describe its structure, recall that for a non-negative integer $n\in \mathbb Z_{\geqslant 0}$ and a bounded operator $T$ acting on a Hilbert space $\mathcal H,$  the $n$-th backward difference $B_n(T)$ is defined by
\begin{align*}
    B_n(T)= \sum\limits_{j=0}^n (-1)^j\binom{n}{j}{T^*}^j T^j.
\end{align*} 
Let $\mu\in M_+(\overline{\D})$ and $k\in \mathbb Z_{\geqslant 1}.$ We find that the operator $M_z$ on $\mathscr D_{\mu,k}$ satisfies the following operator inequalities: 
\begin{align*}
    (-1)^kB_{n} \left( M_z \right) \ge 0, \,\,\mbox{for every\,\,} n \ge k,
\end{align*}
 Moreover, the operator $M_z$ on $\mathscr D_{\mu,k}$ is a $(k+1)$-isometry, that is, $B_{k+1} \left( M_z \right)=0$ if and only if the measure $\mu$ is supported on $\T,$ see  Proposition \ref{thm:shift-completely-hyperstuff}. In particular, $M_z$ is completely hyperexpansive operator of order $k$ when $k$ is odd and completely hypercontractive operator of order $k$ when $k$ is even, a notion studied by Chavan and Sholapurkar in \cite[Sec.~3]{CS17} and \cite[Sec.~4]{CMF}. Motivated by this dichotomy, and following the terminology of \cite[IV, Definition~1.2]{1993multiplication}, we introduce the notion of Dirichlet-type operators of finite order.

\begin{definition*}[Dirichlet type operator of finite order]
Let $k\in\mathbb Z_{\geqslant 1}.$ An operator $T$ in $\calB \left( \calH \right)$ is said to be a Dirichlet type operator of order $k$ if $(-1)^kB_{n} \left( T \right) \ge 0$ for all $n \ge k.$ 
\end{definition*}

Therefore, the operator $M_z$ on $\mathscr D_{\mu,k}$ is a Dirichlet type operator of order $k$ for any $\mu\in M_+(\overline{\D})$ and $k\in\mathbb Z_{\geqslant 1}.$ For another class of examples, note that for $k\in\mathbb Z_{\geqslant 1},$  the operator $M_z$ on $\mathscr D_{\alpha}$ is of Dirichlet type of order $k$ if $\alpha\in (k-1,k],$ see Remark \ref{another example}. It also follows from \cite[Theorem 2.5]{2015Gu}, that every $(k+1)$-isometry is necessarily a Dirichlet type operator of order $k.$ 

Motivated by the influential work of Agler and Stankus on $m$-isometry in \cite{Agler1,Agler2,Agler3}, Rydhe provided a model for every cyclic $m$-isometry on a Hilbert space in terms of shift operator on higher order weighted Dirichlet type spaces, see \cite[Theorem 3.1]{Rydhe19}. In this article, we develop an analogous model for cyclic Dirichlet-type operators of finite order. 

To state the result, let $\mathcal D$ be the Fr\'{e}chet space of $C^{\infty}$ functions on $\T$ and $\mathcal D'$ be the topological dual of $\mathcal D,$  the space of distributions on the unit circle $\mathbb T.$ For each \textit{allowable normalized} tuple $\overrightarrow{\mu}= (\mu_0,\mu_1,\ldots,\mu_{m})\in (\mathcal D')^{m}\times M_+(\overline{\D}),$ we introduce a  Hilbert space $\mathscr D_{\overrightarrow{\mu}}$ which serves as a model space for realizing a cyclic Dirichlet type operators of finite order. We refer the reader to Section 5 for the terminologies and the details. The second main result of this article is the following model theorem for cyclic Dirichlet type operators of finite order.

\begin{theorem}\label{Model Theorem}
    Let $m\in\mathbb Z_{\geqslant 1}.$ If $\overrightarrow{\mu}= (\mu_0,\mu_1,\ldots,\mu_{m})\in (\mathcal D')^{m}\times M_+(\overline{\D})$ is a normalized allowable $(m+1)$-tuple, then the operator $M_z$ on  $\mathscr D_{\overrightarrow{\mu}}$ is a cyclic Dirichlet type operator of order $m.$ Conversely if $T\in \mathcal B(\mathcal H)$ is a Dirichlet type operator of order $m$ with cyclic unit vector $e,$ then the tuple $\overrightarrow{\mu}= (\mu_0,\mu_1,\ldots,\mu_{m})\in (\mathcal D')^{m}\times M_+(\overline{\D})$ given by 
    \begin{align*}
        \hat{\mu_j}(k)= \overline{\hat{\mu_j}(-k)}&= \langle \beta_j(T)e, T^ke\rangle_{\mathcal H},\quad k\in \mathbb Z_{\geqslant 0} , \quad j=0,1,\ldots,m-1,\\
         \int_{\mathbb{\overline{\D}}} z^{k} \overline{z}^{l} \mathrm{d}\mu_{m} (z) &= \ip{\beta_{m} (T) T^k e, T^l e}_{\calH},
    \end{align*} is a normalized allowable $(m+1)$-tuple, and there exists a unitary $U:\mathcal H \to \mathscr D_{\overrightarrow{\mu}}$ such that $UT=M_zU$ and $Ue=1.$
    
    Moreover if $T_j\in \mathcal B(\mathcal H_j),$ for $j=1,2,$ are cyclic and  Dirichlet type operators of order $m$ with cyclic vector $e_j,$ then the associated normalized allowable $(m+1)$-tuple coincides if and only if there exists a unitary $V:\mathcal H_1\to \mathcal H_2$ such that $VT_1=T_2V$ and $Ve_1=e_2.$ 
\end{theorem}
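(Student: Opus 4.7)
The plan is to prove the forward direction, the converse, and uniqueness in turn; the single algebraic engine is the telescoping identity $D_{\mu,k+1}(zf)-D_{\mu,k+1}(f)=D_{\mu,k}(f)$ from the introduction, which is valid for $\mu\in M_+(\overline{\D})$ and, by a direct Poisson-kernel computation, for $\mu\in \mathcal D'$ as well (since $|z|^2=1$ on $\T$ absorbs the $k=0$ boundary term). Induction on $n$ then gives on $\mathscr D_{\overrightarrow{\mu}}$ the formula
\begin{align*}
\ip{\beta_n(M_z)p,p}_{\overrightarrow{\mu}} \;=\; \sum_{j=n}^m D_{\mu_j,\,j-n}(p), \qquad 0\le n\le m,
\end{align*}
and for $n>m$ the cascade bottoms out in $(-1)^{n-m}\int_{\overline{\D}}(1-|z|^2)^{n-m}|p|^2\,d\mu_m\ge 0$. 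Since $\beta_n=(-1)^n B_n$, this yields $(-1)^m B_n(M_z)\ge 0$ for every $n\ge m$, completing the forward direction.

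For the converse, given cyclic $T$ of Dirichlet type of order $m$ with cyclic unit vector $e$, set $\gamma_n:=(-1)^{n-m}\beta_n(T)\ge 0$ for $n\ge m$, and note the recursion $\gamma_n=\gamma_{n-1}-T^*\gamma_{n-1}T$ forces $\gamma_{n+1}\le \gamma_n$. Define $\mu_0,\ldots,\mu_{m-1}$ and $\mu_m$ by the stated moment data. To see $\mu_j\in \mathcal D'$ for $j<m$, I observe that the $k$-finite-differences $\Delta^n\norm{T^ke}^2=\ip{\beta_n(T)T^ke,T^ke}$ have alternating signs for $n\ge m$, making $\Delta^m\norm{T^ke}^2$ a completely monotone sequence, hence a bounded Hausdorff moment sequence, which upon $m$-fold summation forces $\norm{T^ke}^2=O(k^m)$; Cauchy--Schwarz then yields polynomial growth of $\hat{\mu_j}(k)=\ip{\beta_j(T)e,T^ke}$. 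The principal technical step is producing $\mu_m$ as a genuine finite positive Borel measure on $\overline{\D}$: the identity $\ip{\gamma_{m+k}(T)p(T)e,p(T)e}\ge 0$ should equal $\int_{\overline{\D}}|p(z)|^2(1-|z|^2)^k\,d\mu_m(z)$, supplying the Riesz--Haviland positivities on the cone generated by $|p|^2$ and $1-|z|^2$; a Positivstellensatz-style argument on the compact set $\overline{\D}$ then extends this functional to the desired measure.

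The cohesive step is then the norm identity $\norm{p(T)e}_{\calH}^2=\sum_{j=0}^m D_{\mu_j,j}(p)$ for $p\in\mathbb C[z]$, proved by reverse induction on $n$ of
\begin{align*}
\ip{\beta_n(T)p(T)e,p(T)e}_{\calH}=\sum_{j=n}^m D_{\mu_j,\,j-n}(p),
\end{align*}
with base case $n=m$ immediate from the moment definition of $\mu_m$ and inductive step matching the operator recursion $\beta_n(T)=T^*\beta_{n-1}(T)T-\beta_{n-1}(T)$ with the Dirichlet-integral recursion, the constant of integration being fixed by $\ip{\beta_{n-1}(T)e,T^ke}=\hat{\mu_{n-1}}(k)$. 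The $n=0$ instance simultaneously gives normalized allowability of $\overrightarrow{\mu}$ and the isometry $p(T)e\mapsto p$, which by cyclicity of $e$ extends to the advertised unitary $U:\calH\to\mathscr D_{\overrightarrow{\mu}}$ intertwining $T$ with $M_z$ and sending $e$ to $1$; the uniqueness statement follows because identical tuples yield identical polynomial norms, so $p(T_1)e_1\mapsto p(T_2)e_2$ extends to the desired intertwining unitary. I expect the main obstacle to be the construction of $\mu_m$: unlike the $m$-isometric case (where $\beta_m(T)=0$, so no $\mu_m$ is needed) or the harmonically weighted case (a single derivative on $\T$), here the moment problem lives on the full closed disc and demands a two-variable positivity argument, with the alternating hierarchy $\gamma_{m+k}(T)\ge 0$ being precisely what a Haviland-type extension requires.
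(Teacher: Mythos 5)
Your overall architecture mirrors the paper's: establish the forward direction by pushing the telescoping identity through the weighted sum $\sum_j D_{\mu_j,j}$, then build the tuple from $\beta_j(T)$ moments and prove the norm identity $\|p(T)e\|^2=\sum_j D_{\mu_j,j}(p)$ to obtain the unitary. Two comments are worth making, one minor and one substantive.

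Minor: in the forward direction your displayed sign is off. For $n>m$ the cascade gives $\ip{B_n(M_z)p,p}_{\overrightarrow{\mu}}=(-1)^m\int_{\D}(1-|z|^2)^{n-m}|p|^2\,d\mu_m|_{\D}$, so what is nonnegative is $(-1)^m\ip{B_n(M_z)p,p}$, not $(-1)^{n-m}\int(\cdots)$ as you wrote; your stated conclusion $(-1)^mB_n(M_z)\ge 0$ is right but the intermediate sign isn't. Your growth bound $\|T^ke\|^2=O(k^m)$ via ``$\Delta^m\psi$ is completely monotone, hence a bounded Hausdorff moment sequence'' is correct and is in fact cleaner than the paper's Lemma~\ref{order of CAk}, which derives the same bound from the explicit structure theorems for completely alternating/monotone sequences of finite order in \cite{CS17,CMF}.

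Substantive: the construction of $\mu_m$ as you sketch it has a genuine gap. You observe, correctly, that the functional $L(z^k\bar z^l)=\ip{\beta_m(T)T^ke,T^le}$ satisfies $L(|p|^2(1-|z|^2)^j)\ge 0$ for all analytic $p$ and all $j\ge 0$, and you propose to conclude via ``Riesz--Haviland on the cone generated by $|p|^2$ and $1-|z|^2$.'' But Riesz--Haviland requires $L\ge 0$ on \emph{all} real polynomials nonnegative on $\overline{\D}$, and the relevant Positivstellensatz (Schm\"udgen/Putinar) produces decompositions $\sigma_0+\sigma_1(1-|z|^2)$ with $\sigma_i$ \emph{real} sums of squares, not hermitian sums $\sum|p_i|^2$ of analytic polynomials. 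These cones are genuinely different: $(z+\bar z)^2$ is a real square nonnegative on $\overline{\D}$ but not a sum of hermitian squares. So positivity of $L$ on the hermitian cone alone does not feed into Haviland's hypothesis, and the ``Positivstellensatz-style'' step does not go through as stated. The paper circumvents this by a different mechanism: since $\beta_m(T)\ge 0$, the form $\ip{p,q}_{\beta_m(T)}:=\ip{\beta_m(T)p(T)e,q(T)e}$ gives a pre-Hilbert space on which $M_z$ satisfies $B_n(M_z)\ge 0$ for \emph{every} $n\ge 0$ (this is exactly what $T$ being Dirichlet type of order $m$ buys after shifting by $\beta_m$). Agler's hypercontraction theorem then says $M_z$ extends to a subnormal contraction, and $\mu_m$ is the scalar spectral measure of the minimal normal extension, automatically a finite positive measure on $\overline{\D}$. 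This invocation of subnormality is precisely the content your positivity hierarchy encodes, but it is a substantial theorem and not something a direct Haviland argument recovers; you should cite it rather than attempt to route around it.

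The rest of your converse — the reverse induction on $\ip{\beta_n(T)p(T)e,p(T)e}=\sum_{j\ge n}D_{\mu_j,j-n}(p)$ with the ``constant of integration'' fixed by $\ip{\beta_{n-1}(T)e,T^ke}=\hat{\mu}_{n-1}(k)$, and the uniqueness via polynomial norms — is correct in spirit and matches the paper's Lemma~\ref{moment through diff} together with the one-step-up identity, though the paper phrases the induction through the shift-sum operator $S$ rather than your ``reverse induction with initial data.''
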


We remark that if $\overrightarrow{\mu}= (\mu_0,\mu_1,\ldots,\mu_{m})\in (\mathcal D')^{m}\times M_+(\overline{\D})$ is a normalized allowable $(m+1)$-tuple, then the operator $M_z$ on  $\mathscr D_{\overrightarrow{\mu}}$ is an $(m+1)$-isometry if and only if $\mu_m$ is supported on $\T,$ that is, $\mu_m\in M_+(\T).$ And in that case the model spaces $\mathscr D_{\overrightarrow{\mu}}$ coincides with the model spaces for cyclic $(m+1)$-isometries as described by Rydhe in \cite[Theorem 3.1]{Rydhe19}. Moreover the case of $m=1,$ gives us the model for every cyclic completely hyperexpansive operators on a Hilbert spaces extending the model theorem by S. Richter in \cite[Theorem 5.1]{Richter91} and A. Aleman in \cite[IV, Theorem 2.5]{1993multiplication}. In this way, our results provide a natural extension and unification of the existing model theories for cyclic completely hyperexpansive operators, and cyclic $m$-isometries within a single higher-order framework.

The organization of the paper is as follows. In Section~2, we collect basic properties of the polyharmonic Green–Almansi function $G_k(z,\zeta)$ and use them to derive key properties of the associated weighted Dirichlet integrals $D_{\zeta,k}(f)$ for $\zeta \in \mathbb{D}.$

In Section~3, we introduce the local Dirichlet spaces $\mathscr D_{\zeta,k}$ for $\zeta \in \mathbb{D}$, and establish a higher-order version of the Littlewood–Paley formula for $D_{\zeta,k}(f).$

Section~4 is devoted to the study of higher-order weighted Dirichlet-type spaces $\mathscr{D}{\mu,k}$ associated with measures $\mu \in M^+(\overline{\mathbb{D}}),$ including an analysis of the multiplication operator $M_z.$

Finally, in Section~5, we introduce the model spaces $\mathscr{D}_{\overrightarrow{\mu}}$ and prove the model theorem for cyclic Dirichlet-type operators of finite order.

\section{Preliminaries on polyharmonic Green function}
We will be using the following notation throughout the article. For two family of parametrized set $\left\{A_{i,j}: i\in I,j\in J\right\}$, $
\left \{B_{i,j}: i\in I,j\in J\right \} \subseteq [0,\infty),$ we write $A_{i,j} \lesssim_{j} B_{i,j}$ to denote that there exists constants $C_j >0$ (independent of $i$) such that $A_{i,j} \leqslant C_j B_{i,j}$ for every $i\in I,j\in J.$ If $A_{i,j} \lesssim_{j} B_{i,j}$ and $B_{i,j} \lesssim_{j} A_{i,j},$ we write $A_{i,j} \thickapprox_{j} B_{i,j}.$ We define the Laplacian to be 
\begin{equation*}
    \Delta_{z} = \frac{\partial^2}{\partial z \partial \overline{z}} = \frac{1}{4} \left( \frac{\partial ^{2}}{\partial x^{2}} + \frac{\partial ^{2}}{\partial y^{2}} \right), \qquad  z=x+iy.
\end{equation*}

Following \cite[Definition 1]{Begehr2002}, we consider the polyharmonic Green-Almansi function $G_k(z,\zeta)$ for $z, \zeta \in \overline{\mathbb{D}}$, $z \ne \zeta$ and $k \ge 1$, given by
\begin{equation*}
G_k(z, \zeta) = |\zeta - z|^{2(k-1)} \log \left| \frac{1 - z\overline{\zeta}}{\zeta - z} \right|^2 + \sum_{l=1}^{k-1} \frac{(-1)^{l}}{l} |\zeta - z|^{2(k-1-l)} (1 - |\zeta|^2)^{l} (1 - |z|^2)^{l}.
 \end{equation*}
We note down a few important properties of the polyharmonic Green-Almansi function $G_k$ on the unit disc $\mathbb D,$ see \cite[Page 86]{Begehr07}). 

\begin{proposition}\label{Basic properties of Green's function}
    The following statements holds.
\begin{enumerate}
    \item[(a)] For a fixed $\zeta \in \D$, $G_k (z, \zeta)$ is $k$-harmonic, that is, $\Delta_z^k(G_k(z,\zeta))=0,$ for $z \in \D \setminus \{ \zeta \},$
    \item[(b)] For a fixed  $\zeta \in \D$, $G_k (z, \zeta) + \abs{\zeta - z}^{2(k-1)} \log \abs{\zeta - z}^2$ is $k$-harmonic in $z \in \D$, 
    \item[(c)] For a fixed  $\zeta \in \D$, $\Delta_z ^j G_k (z, \zeta) = 0$ for $0 \le 2j \le k-1$ for $z \in \partial\D$ and $\zeta \in \D$,
    \item[(d)] $\frac{\partial}{\partial n} \Delta_z ^j G_k (z, \zeta) = 0$ for $0\le 2j \le k-2$ for $z\in \T$ and $\zeta \in \D$,
    \item[(e)] $G_k (z, \zeta) = G_k (\zeta , z)$ for $z, \zeta \in \D$ with $z\ne \zeta$.
    \item[(f)] $\triangle_z^k G_k(z,\zeta)= -((k-1)!)^2 \delta_{\zeta}$ (in distributional sense) for each $\zeta \in \mathbb D$.
\end{enumerate}
\end{proposition}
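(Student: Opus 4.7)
My plan is to verify these classical identities for the polyharmonic Green--Almansi function by direct calculation from its explicit formula, following Begehr~\cite{Begehr07}. The elementary facts I will use repeatedly are: $\log|1 - z\overline{\zeta}|^2$ is harmonic in $z$ on $\D$ (being the real part of $2\log(1-z\overline{\zeta})$); $\log|\zeta - z|^2$ is harmonic in $z$ on $\D\setminus\{\zeta\}$ with distributional Laplacian (with respect to $dA$) equal to $\delta_\zeta$; the boundary identity $|1 - z\overline{\zeta}| = |\zeta - z|$ whenever $|z| = 1$; and the symmetry of $|\zeta - z|$, $|1 - z\overline{\zeta}|$, and $(1-|z|^2)(1-|\zeta|^2)$ under $z \leftrightarrow \zeta$. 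Applied termwise to the formula for $G_k$, the last observation gives assertion (e) on inspection.

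The cleanest route to (a), (b), and (f) uses the factorization $\Delta_z^k = \partial_z^k\,\partial_{\overline{z}}^k$ together with the Leibniz rule. Rewriting
\[
G_k(z,\zeta) = |\zeta-z|^{2(k-1)}\log|1-z\overline{\zeta}|^2 \;-\; |\zeta-z|^{2(k-1)}\log|\zeta-z|^2 \;+\; \sum_{l=1}^{k-1}\frac{(-1)^l}{l}|\zeta-z|^{2(k-1-l)}(1-|\zeta|^2)^l(1-|z|^2)^l,
\]
each summand has the form $P(z,\overline{z})\,H(z)$ with $P$ a polynomial in $(z,\overline{z})$ of bidegree at most $(k-1,k-1)$ (check separately for the three groups, using $|\zeta-z|^{2a}(1-|z|^2)^b$ having bidegree $(a+b, a+b)\le (k-1,k-1)$) and $H$ harmonic (on $\D$ for the first and third groups, on $\D\setminus\{\zeta\}$ for the middle one). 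Expanding $\partial_z^k\,\partial_{\overline{z}}^k(PH)$ by Leibniz, any surviving contribution from the $P$-factor uses at most $k-1$ derivatives in each of $\partial_z,\partial_{\overline{z}}$, so at least one $\partial_z$ and one $\partial_{\overline{z}}$ must act on $H$; since $H$ is a sum of a holomorphic and an antiholomorphic function, such a mixed derivative kills it. This proves $\Delta_z^k G_k(\cdot,\zeta) = 0$ on $\D\setminus\{\zeta\}$, which is (a); the middle summand is the only source of singularity at $\zeta$, so removing it leaves a smooth $k$-harmonic function on all of $\D$, which is (b).

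For (f), combining (a) and (b) gives in the distributional sense on $\D$
\[
\Delta_z^k G_k(z,\zeta) = -\Delta_z^k\bigl(|\zeta-z|^{2(k-1)}\log|\zeta-z|^{2}\bigr).
\]
A direct computation: $\partial_z^k\bigl((z-\zeta)^{k-1}(\overline{z}-\overline{\zeta})^{k-1}\log|\zeta-z|^2\bigr)$ simplifies via Leibniz and the identity $\sum_{m=0}^{k-1}\binom{k}{m}(-1)^{k-m-1}=1$ to $(k-1)!\,(\overline{z}-\overline{\zeta})^{k-1}/(z-\zeta)$; applying $\partial_{\overline{z}}^{k-1}$ yields $((k-1)!)^2/(z-\zeta)$, and a final $\partial_{\overline{z}}$ produces $((k-1)!)^2\,\pi\delta_\zeta$, which equals $((k-1)!)^2\,\delta_\zeta$ with respect to $dA$. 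Hence $\Delta_z^k G_k(z,\zeta) = -((k-1)!)^2\,\delta_\zeta$.

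For (c) and (d), the boundary analysis uses that on $|z| = 1$ both $\log|(1 - z\overline{\zeta})/(\zeta - z)|^2 = 0$ and $(1 - |z|^2)^l = 0$, giving $G_k(\cdot, \zeta)|_{\partial\D} = 0$ (the $j = 0$ case of (c)). For larger $j$ I would compute $\Delta_z^j G_k$ using the same Leibniz expansion and track how many factors of $(1-|z|^2)$ or $\log|(1-z\overline{\zeta})/(\zeta-z)|^2$ survive in each resulting summand: for $2j \leq k-1$ at least one such factor remains and vanishes on $\partial\D$, giving (c); for $2j \leq k-2$ at least one remains even after one normal differentiation, giving (d). This meticulous accounting---showing that the specific coefficients $(-1)^l/l$ and powers $(1-|z|^2)^l$ in the definition of $G_k$ deliver exactly the right cancellations to realize the iterated Dirichlet--Neumann boundary conditions---is the main computational obstacle; I would proceed by induction on $k$, mirroring Begehr's construction \cite{Begehr07}.
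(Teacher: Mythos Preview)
The paper does not actually prove this proposition: it simply records the properties with a reference to Begehr~\cite[p.~86]{Begehr07}, and appends a one-line remark deriving (f) from the identity $\Delta_z^k\bigl(\int_\D G_k(z,\zeta)g(\zeta)\,dA(\zeta)\bigr)=-((k-1)!)^2 g(z)$ for $g\in C_c^\infty(\D)$. Your proposal is therefore already more detailed than what the paper provides.

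Your arguments for (a), (b), (e), and (f) are correct and self-contained. The Leibniz/bidegree argument for (a)--(b) is clean: every summand of $G_k$ is (polynomial of bidegree $\le(k-1,k-1)$)$\times$(harmonic), so $\partial_z^k\partial_{\overline z}^k$ forces a mixed derivative onto the harmonic factor, which vanishes. Your computation for (f) is also right, including the binomial cancellation $\sum_{m=0}^{k-1}\binom{k}{m}(-1)^{k-m-1}=1$ and the normalization $\partial_{\overline z}\bigl(1/(z-\zeta)\bigr)=\delta_\zeta$ with respect to $dA=\pi^{-1}\,dx\,dy$; this is more explicit than the paper's remark, though equivalent to it. Symmetry (e) is indeed immediate from the formula.

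For (c) and (d) you honestly flag the remaining work as a bookkeeping exercise and propose to do it by induction on $k$ following Begehr. That is precisely what the paper (implicitly) does by citing Begehr, so there is no gap relative to the paper's own treatment. If you want a slightly faster route than tracking Leibniz expansions termwise, note that item~(4) of the paper's Proposition~\ref{prop:properties-of-Green-functions} (also taken from Begehr) gives $\Delta_z G_k = (k-1)^2 G_{k-1} - (k-1)H_{k-1}$, and both $G_{k-1}$ and $H_{k-1}$ visibly carry enough powers of $(1-|z|^2)$ to make the induction for (c)--(d) short.
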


We remark that item $(f)$ of the previous proposition follows from the fact that 
\begin{equation*}
    \Delta_{z}^{k} \left( \int_{\D}  G_k (z, \zeta) g(\zeta) \, dA(\zeta) \right) = - ((k-1)!)^2g(z)
\end{equation*}
for each $g \in C_c^{\infty} (\D),$ the set of compactly supported smooth functions on $\D,$ see \cite[Page~95]{Begehr07}.

In the following proposition, we establish identities and estimates for the Green function $G_k$, which will be useful later in the article. While the bi-harmonic Green's function has been extensively studied (see \cite{Garbedian64, Begehr2002, DurenBergman, TheoryofBergmanJhu, AH2002} and the references therein), not many results for its higher-order counterparts appear in the literature, to the best of our knowledge. We therefore present and prove some of the analogous results for the higher-order case for the sake of completion. 

\begin{proposition}
The following statements are true:
\begin{enumerate}
    \item For $k\ge 2$, $z \in \D$ and $\zeta \in \overline{\mathbb{D}}$, we have 
	\begin{equation*}
	    G_{k} \left( z, \zeta \right) = \abs{z-\zeta}^{2} G_{k-1} \left( z, \zeta \right) + \frac{\left( -1 \right)^{k-1}}{k-1} \left( 1-\abs{z}^{2} \right)^{k-1} \left( 1-\abs{\zeta}^{2} \right)^{k-1}.
	\end{equation*}

    \item Let $\varphi_{a} \left( w \right) = \frac{a-w}{1-\overline{a}w}, a\in \D, w \in \D$. Then we have that 
	\begin{equation}
	    G_{k} \left( \varphi_{a} \left( z \right) , \varphi_{a} \left( \zeta \right) \right) = \left( \frac{1-\abs{a}^{2}}{\abs{1-\overline{a}z} \abs{1-\overline{a}\zeta}} \right)^{2\left( k-1 \right)} G_{k} \left( z, \zeta \right), \qquad  z, \zeta \in \D.
	    \label{eqn:Green-Mobius}
	\end{equation}
    \item  For $k\ge 2$, $z \in \D$ and $\zeta \in \mathbb{D}$, 
	\begin{equation}
	    \frac{1}{k} \frac{ \left( \left( 1-\abs{\zeta}^{2} \right) \left( 1- \abs{z}^{2} \right)\right) ^{k} }{\abs{1-\overline{\zeta}z}^{2}} \le \left( -1 \right)^{k+1} G_{k} \left( z, \zeta \right) \le \frac{1}{k-1}  \frac{ \left( \left( 1-\abs{\zeta}^{2} \right) \left( 1- \abs{z}^{2} \right)\right) ^{k} }{\abs{1-\overline{\zeta}z}^{2}}.
        \label{eqn:Green-estimate}
	\end{equation}
	Hence, we have that for each $\zeta,z \in \D$ and $k \ge 2$, the $\left( -1 \right)^{k+1} G_{k} \left( z, \zeta \right)  \ge 0$ and 
	\begin{equation}
	    \left( -1 \right)^{k+1} G_{k} \left( z, \zeta \right) \thickapprox_{\zeta, k} \left( 1- \abs{z}^{2} \right)^{k}, \qquad  z\in \mathbb D.
	    \label{eqn:G_k-growth}
	\end{equation}
    \item For $k\ge 2$, $z \in \D$ and $\zeta \in \overline{\mathbb{D}}$, we have
    \begin{equation*}	
	\Delta_{z} G_{k} \left( z, \zeta \right) = \left( k-1 \right)^{2} G _{k-1} \left( z, \zeta \right) - \left( k-1 \right) H_{k-1} \left( z, \zeta \right),
    \end{equation*} where the function $H_n(z,\zeta)$ for $z\in\D$ and $\zeta\in\overline{\D}$ is given by
    \begin{equation*}
    H_{n} \left( z, \zeta \right) := \left (-1\right)^{n-1}\left( 1-\abs{\zeta}^{2} \right)^{n} \left( 1-\abs{z}^{2}  \right)^{n-1}  \left( \frac{1-\abs{z\zeta}^{2}}{\abs{1-\overline{z}\zeta}^{2}} \right),\quad  n\in \mathbb Z_{\geqslant 1}.
\end{equation*}
    \end{enumerate}
    
    \label{prop:properties-of-Green-functions}
\end{proposition}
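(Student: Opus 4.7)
Part (1) is a direct rearrangement of the defining sum: separating the $l=k-1$ term of the sum in $G_k$ and factoring $|z-\zeta|^2$ out of the remaining summands identifies them as $|z-\zeta|^2 G_{k-1}(z,\zeta)$. For part (2), I will substitute the standard identities
\begin{align*}
    \varphi_{a}(\zeta)-\varphi_{a}(z) &= -\tfrac{(1-|a|^2)(\zeta-z)}{(1-\bar a z)(1-\bar a\zeta)}, &
    1-|\varphi_{a}(z)|^2 &= \tfrac{(1-|a|^2)(1-|z|^2)}{|1-\bar a z|^2},\\
    1-\varphi_a(z)\overline{\varphi_a(\zeta)} &= \tfrac{(1-|a|^2)(1-z\bar\zeta)}{(1-\bar a z)(1-a\bar\zeta)}
\end{align*}
into the explicit formula for $G_k(\varphi_a(z),\varphi_a(\zeta))$. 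Each monomial term $|\cdot|^{2(k-1-l)}(1-|\cdot|^2)^l(1-|\cdot|^2)^l$ and the prefactor $|\varphi_a(\zeta)-\varphi_a(z)|^{2(k-1)}$ of the log collect the same overall factor $\bigl((1-|a|^2)/(|1-\bar a z||1-\bar a\zeta|)\bigr)^{2(k-1)}$, while the argument of the logarithm is invariant in modulus because the extra factors in numerator and denominator are unimodular phase ratios. Collecting produces (2.3).

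\paragraph*{Part (3).} Using part (2) with $a=\zeta$ and setting $w=\varphi_\zeta(z)$, $s=1-|w|^2 = (1-|\zeta|^2)(1-|z|^2)/|1-\bar\zeta z|^2$, we have $(-1)^{k+1}G_k(z,\zeta) = |1-\bar\zeta z|^{2(k-1)}J_k(s)$ with $J_k(s) := (-1)^{k+1}G_k(w,0)$, so the inequality reduces to $s^k/k \le J_k(s) \le s^k/(k-1)$. The key step is the closed form
\begin{equation*}
    J_k(s) = \int_0^s \frac{(s-u)^{k-1}}{1-u}\,du,
\end{equation*}
which I will prove by induction. For $k=1$ this is $-\log(1-s)$, which agrees with $G_1(w,0)=\log|1/w|^2$. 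For the step, part (1) with $\zeta=0$ gives $J_k = -(1-s)J_{k-1} + s^{k-1}/(k-1)$; the splitting $(s-u)^{k-1} = -(1-s)(s-u)^{k-2} + (1-u)(s-u)^{k-2}$ shows the integral obeys the same recurrence. Substituting $v=(s-u)/s$ yields $J_k(s)/s^k = \int_0^1 v^{k-1}/((1-s)+sv)\,dv$. Differentiation under the integral sign gives a non-negative integrand, so $s\mapsto J_k(s)/s^k$ is increasing on $[0,1]$ with $F(0)=1/k$ and $F(1)=1/(k-1)$ (for $k\ge 2$), producing both constants at once. Positivity $(-1)^{k+1}G_k(z,\zeta)\ge 0$ and the asymptotic (2.5) are immediate consequences.

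\paragraph*{Part (4).} For the Laplacian identity I will compute $\Delta_z G_k$ directly from the decomposition $G_k=L+M$, where $L = |z-\zeta|^{2(k-1)}\log|(1-z\bar\zeta)/(\zeta-z)|^2$ and $M$ is the polynomial sum. Away from $z=\zeta$, the function $A(z):=\log|1-z\bar\zeta|^2 - \log|\zeta-z|^2$ is harmonic, so applying the Leibniz rule $\Delta(fg) = g\Delta f + f\Delta g + (\partial_z f)(\partial_{\bar z}g) + (\partial_{\bar z}f)(\partial_z g)$ with $\Delta_z|z-\zeta|^{2m}=m^2|z-\zeta|^{2(m-1)}$ gives
\begin{equation*}
    \Delta_z L = (k-1)^2|z-\zeta|^{2(k-2)}A(z) + (k-1)|z-\zeta|^{2(k-2)}\bigl[(z-\zeta)\partial_z A + (\bar z-\bar\zeta)\partial_{\bar z}A\bigr].
\end{equation*}
Plugging in $\partial_z A = -\bar\zeta/(1-z\bar\zeta) - 1/(z-\zeta)$ (and its conjugate) and simplifying with $|1-\bar z\zeta|^2 + 1 - |z\zeta|^2 = 2(1-\operatorname{Re}(z\bar\zeta))$ reduces the bracket to a multiple of $(1-|\zeta|^2)(1-|z\zeta|^2)/|1-\bar z\zeta|^2$. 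For $\Delta_z M$, I expand each summand using the same Leibniz rule together with $\Delta_z(1-|z|^2)^n = n(1-|z|^2)^{n-2}(n|z|^2-1)$, which produces three families of terms per summand; after reindexing, the leading $(k-1)^2|z-\zeta|^{2(k-2)}A(z)$ piece from $\Delta_z L$ combines with the reindexed polynomial terms into $(k-1)^2 G_{k-1}(z,\zeta)$, and the residual contributions assemble into $-(k-1)H_{k-1}(z,\zeta)$.

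\paragraph*{Main obstacle.} The principal obstacle is the combinatorial bookkeeping in part (4): correctly matching the many cross terms from $\Delta_z M$ against the shift of index $l\mapsto l$ in $G_{k-1}$ and identifying the Poisson-type residue as $H_{k-1}$. The crucial algebraic step is the identity on $1-|z\zeta|^2$ and $|1-\bar z\zeta|^2$ noted above. A viable backup plan is induction on $k$: the base case $k=2$ is verified by hand, and the inductive step applies $\Delta_z$ to the recursion in part (1) via $\Delta_z[|z-\zeta|^2\phi] = \phi + (z-\zeta)\partial_z\phi + (\bar z-\bar\zeta)\partial_{\bar z}\phi + |z-\zeta|^2\Delta_z\phi$, substituting the inductive hypothesis for $\Delta_z G_{k-1}$ and using part (1) in reverse to re-express $|z-\zeta|^2 G_{k-2}$ in terms of $G_{k-1}$.
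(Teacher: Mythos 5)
Parts (1) and (2) follow the same route as the paper: (1) is a direct rearrangement of the defining sum, and (2) substitutes the standard M\"obius identities.

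For part (3) your argument is correct but genuinely different from the paper's. After reducing to $\zeta=0$ via part (2), you derive the integral representation
$J_k(s)=\int_0^s (s-u)^{k-1}/(1-u)\,du$
by checking the base case $k=1$ and the recurrence $J_k=-(1-s)J_{k-1}+s^{k-1}/(k-1)$ obtained from part (1), and then substitute $v=(s-u)/s$ to get $J_k(s)/s^k=\int_0^1 v^{k-1}/\bigl((1-s)+sv\bigr)\,dv$, which is visibly increasing in $s$. The paper instead proves by induction the explicit power-series identity $(-1)^{k+1}f_k(1-x)=(k-1)!\sum_{n\ge0}x^{n+k}/(n+1)^{\overline{k}}$, divides by $(1-x)^k$, and observes that the resulting series in $x$ has positive coefficients, hence is decreasing. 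The two are equivalent --- expanding $1/(1-u)$ in your integral and integrating termwise with the Beta function reproduces the paper's series --- but your integral form makes the monotonicity and the endpoint values $1/k$ and $1/(k-1)$ immediate without the combinatorial induction. One small remark: the value ``$F(1)=1/(k-1)$'' should be read as $\lim_{s\to 1^-}$; the integrand has an integrable endpoint singularity for $k\ge 2$, which is exactly where the hypothesis $k\ge 2$ enters.

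For part (4) there is a genuine gap. The paper does not prove this item either --- it only cites \cite[Lemma 3]{Begehr2002} --- so the bar for a proof here is higher than a sketch. Your direct computation decomposes $G_k=L+M$ and describes the cross terms, but you explicitly flag the bookkeeping as unresolved. Your backup induction plan has a structural problem: the expansion $\Delta_z[|z-\zeta|^2\phi]=\phi+(z-\zeta)\partial_z\phi+(\bar z-\bar\zeta)\partial_{\bar z}\phi+|z-\zeta|^2\Delta_z\phi$ requires knowledge of the first-order derivatives $\partial_z G_{k-1}$ and $\partial_{\bar z}G_{k-1}$, but the stated inductive hypothesis supplies only $\Delta_z G_{k-1}$. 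You would have to either carry the first-order derivative formulas as part of the induction or compute them separately from the closed form of $G_{k-1}$; as written, the induction does not close. This needs to be filled in (or, like the paper, the claim should be attributed to \cite{Begehr2002}).
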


\begin{proof}
    \begin{enumerate}
	\item For $k \ge 2$, observe that 
	    \begin{align*}
		& G_{k} \left( z, \zeta \right) - \frac{\left( -1 \right)^{k-1}}{k-1} \left( 1- \abs{z}^{2} \right)^{k-1} \left( 1- \abs{\zeta}^{2} \right)^{k-1} \\
&= |\zeta - z|^{2(k-1)} \log \left| \frac{1 - z\overline{\zeta}}{\zeta - z} \right|^2 + \sum_{l=1}^{k-1} \frac{(-1)^{l}}{l} |\zeta - z|^{2(k-1-l)}  (1 - |\zeta|^2)^{l} (1 - |z|^2)^{l}   \\ 
& \hspace{50mm} - \frac{\left( -1 \right)^{k-1}}{k-1} \left( 1- \abs{z}^{2} \right)^{k-1} \left( 1- \abs{\zeta}^{2} \right)^{k-1} \\
 &= |\zeta - z|^{2(k-1)} \log \left| \frac{1 - z\overline{\zeta}}{\zeta - z} \right|^2 + \sum_{l=1}^{k-2} \frac{(-1)^{l}}{l} |\zeta - z|^{2(k-1-l)}  (1 - |\zeta|^2)^{l} (1 - |z|^2)^{l}  \\
 &= \abs{\zeta - z}^{2} \left( |\zeta - z|^{2(k-2)} \log \left| \frac{1 - z\overline{\zeta}}{\zeta - z} \right|^2 + \sum_{l=1}^{k-2} \frac{(-1)^{l}}{l} |\zeta - z|^{2(k-2-l)}  (1 - |\zeta|^2)^{l} (1 - |z|^2)^{l}  \right) \\
 &= \abs{z-\zeta}^{2} G_{k-1} \left( z, \zeta \right).
	    \end{align*}
	\item This follows by direct substitution using the identities below:
	    \begin{align*}
		\varphi_{a} \left( z \right) - \varphi_{a} \left( \zeta \right) &= \frac{( 1-\abs{a}^{2} ) \left( z-\zeta \right)}{\left( 1-\overline{a}z \right) \left( 1-\overline{a}\zeta \right)},\qquad\abs{\frac{1-\varphi_{a} \left( z \right) \overline{\varphi_{a} \left( \zeta \right)}}{\varphi_{a} \left( z \right) - \varphi_{a} \left( \zeta \right)}} = \abs{\frac{1-z\overline{\zeta}}{z-\zeta}}, \\
		  1-\abs{\varphi_{a} \left( z \right)}^{2} &= \frac{\left( 1-\abs{a}^{2} \right) \left( 1-\abs{z}^{2} \right)}{\abs{1-\overline{a}z}^{2}},\,\,\,\qquad a,z,\zeta \in \D.
	    \end{align*}
	    
	\item We prove it for the case $\zeta =0$. We would be then done by item (2) of this Proposition. For $k \ge 2,$ consider the function $f_{k} : \left( 0, \infty \right) \to \R ,$ given by
	\begin{align*}
	    f_{k}\left(x\right) = 
        -x^{k-1}\ln x+\sum_{n=1}^{k-1}\frac{\left(-1\right)^{n}}{n}x^{k-1-n}\left(1-x\right)^{n},\,\, \qquad  x > 0.
	\end{align*}
    We make two observations at this point: 
    \begin{enumerate}[label=(\roman*)]
        \item $G_{k} \left( z, 0 \right) = f_{k} \left( \abs{z}^{2} \right)$ for $z \in \mathbb D$, and, 
        \item $f_k (x) = x f_{k-1} (x) + \frac{(-1)^{k-1}}{k-1} (1-x)^{k-1}$ for $k \ge 3$ and $x \ge 0$.
    \end{enumerate}
Note that the second observation also implies item $(1)$ of this Proposition. We now claim that 
    \begin{equation*}
        (-1)^{k+1} f_k (1-x) = (k-1)! \sum_{n=0}^{\infty} \frac{x^{n+k}}{(n+1)^{\overline{k}}}, \qquad  k \ge 2 ,\,\, |x| \le 1, x \ne 1 .
    \end{equation*} where $(x)^{\overline{m}}= x(x+1)\cdots(x+m-1)$ denotes the rising factorial. The proof follows using the induction argument. We first consider the base case $k=2$. First observe that
    \begin{equation*}
        f_2 (x) =
	-x \ln x - (1-x) \qquad \left( x > 0 \right).
    \end{equation*}
and consequently, we have that for $x\in [-1, 1)$, we have
    \begin{equation*}
        -f_2 (1-x) = (1-x) \ln (1-x) + x 
                    = (1-x) \left( - \sum_{n=1}^{\infty} \frac{x^{n}}{n} \right) + x
                    = \sum_{n=0}^{\infty} \frac{x^{n+2}}{(n+1)(n+2)}.
    \end{equation*}
Now assume the induction hypothesis holds for $l=k-1 \ge 2$. To prove it for $l=k,$ observe that for $\abs{x} \le 1, x \ne 1$, we have
    \begin{align*}
        (-1)^{k+1} f_{k} (1-x) &= (x-1) \left( (-1)^{k} f_{k-1} (1-x) \right) + \frac{1}{k-1} x^{k-1} \\
        &= (k-2)! (x-1) \sum_{n=0}^{\infty} \frac{x^{n+k-1}}{(n+1)^{\overline{k-1}}} + \frac{x^{k-1}}{k-1} \\
        &= (k-2)! \left[\sum_{n=0}^{\infty} \frac{x^{n+k}}{(n+1)^{\overline{k-1}}} - \sum_{n=1}^{\infty} \frac{x^{n+k-1}}{(n+1)^{\overline{k-1}}} \right]\\
        &= (k-2)! \sum_{n=0}^{\infty} x^{n+k} \left[ \frac{1}{(n+1)^{\overline{k-1}}} - \frac{1}{(n+2)^{\overline{k-1}}} \right] \\
        &= (k-1)! \sum_{n=0}^{\infty} \frac{x^{n+k}}{(n+1)^{\overline{k}}}.
    \end{align*}
    This completes the proof of the claim. Now consider the function $g_{k} : \left( 0, 1 \right) \to \R$ given by $g_{k} \left( x \right) = \frac{\left( -1 \right)^{k+1}f_{k} \left( x \right)}{\left( 1- x \right)^{k}}$. Observe that from the previous claim, we have that 
    \begin{equation*}
        g_k (1-x) = (k-1)! \sum_{n=0}^{\infty} \frac{x^n}{(n+1)(n+2) \cdots (n+k)}.
    \end{equation*}
This shows that $x \mapsto g_k (x)$ is decreasing function on $(0,1)$ and  moreover we have $\lim\limits_{x\to 1^{-}}g_k (x) = \frac{1}{k}$ and $\lim\limits_{x\to 0^{+}} g_k (x) = \frac{1}{k-1}$.
 Thus, we have that 
	\begin{equation*}
	    \frac{1}{k} \left( 1-\abs{z}^{2} \right)^{k} \le \left( -1 \right)^{k+1} G_{k} \left( z, 0 \right) \le \frac{1}{k-1} \left( 1-\abs{z}^{2} \right)^{k},\qquad z\in \D\setminus \{0\}.
	\end{equation*}
The general case follows by using equation \eqref{eqn:Green-Mobius}. We omit the details. The equation \eqref{eqn:G_k-growth}
    follows from \eqref{eqn:Green-estimate} and the following estimate:     \begin{equation*}
	    \frac{1}{(1+\abs{\zeta})^2} \le \frac{1}{\abs{1-\overline{\zeta}z}^2} \le \frac{1}{(1-\abs{\zeta})^2}, \qquad  z, \zeta \in \D.
    \end{equation*}.

    \item See \cite[Lemma 3]{Begehr2002} for a proof.
\end{enumerate}
This completes the proof of the proposition.
    \end{proof}
For a fixed $\zeta \in \mathbb{D}$ and $k \ge 1$, we define  
\begin{equation}
    U_{\zeta, k}(z) =  \frac{(-1)^{k+1} G_k(z, \zeta)}{((k-1)!)^{2} (1-\abs{\zeta}^2)^{k}} , \qquad  z \in \mathbb{D}.
    \label{eqn:local-weight-in-open-disc}
\end{equation}

By Proposition \ref{prop:properties-of-Green-functions}, we have $U_{\zeta, k}(z) > 0$ for all $z \in \mathbb{D}$. Furthermore, for any function $f \colon \D \to \C$, we define the dilated function $f_r$ for each $r \in [0,1]$ as  
\begin{equation*}
    f_r(z) := f(rz), \quad z \in \D.
\end{equation*}
The following proposition generalizes \cite[Proposition 2.4]{ARS96}. 
 \begin{proposition}
\label{prop:analytic-Gk-properties}
Let $f\in \operatorname{Hol} \left( \D \right).$  Then the following properties hold for $k \ge 2$:

\begin{enumerate}[label=(\roman*)]
\item[\rm (i)] For every $\zeta \in \D$, we have the radial limit equality:
\begin{equation*}
\lim_{r\to 1^-} \int_{\D} \Delta_z \left( |f_r(z)|^2 \right) U_{\zeta, k} \, dA(z) = \int_{\D} \Delta_z \left( |f(z)|^2 \right) U_{\zeta, k} \, dA(z).
\end{equation*}

\item[\rm(ii)] The Laplacian-Green's function exchange formula holds:
\begin{equation*}
\int_{\D} \Delta_z \left( |f(z)|^2 \right) U_{\zeta, k} \, dA(z) = \int_{\D} \Delta_z U_{\zeta, k} (z) |f(z)|^2 \, dA(z).
\end{equation*}
\end{enumerate}
  \end{proposition}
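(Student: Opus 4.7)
The plan is to establish (i) via a change of variables $w = rz$ followed by dominated convergence, using Proposition \ref{prop:properties-of-Green-functions}(3) to dominate $U_{\zeta,k}(w/r)$ by $U_{\zeta,k}(w)$; part (ii) is then deduced by applying Green's second identity to the dilated function $f_r$ (which is smooth on $\overline{\mathbb{D}}$) and passing $r \to 1^-$ using (i).

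For (i), since $\Delta_z(|f_r|^2)(z) = |f_r'(z)|^2 = r^2|f'(rz)|^2$, the substitution $w = rz$ recasts the left-hand side as $\int_{r\mathbb{D}}|f'(w)|^2 U_{\zeta,k}(w/r)\,dA(w)$. With $\zeta \in \mathbb{D}$, both $|1 - \bar\zeta w|$ and $|1 - \bar\zeta w/r|$ lie in $[1-|\zeta|, 1+|\zeta|]$ for $r > |\zeta|$, so \eqref{eqn:Green-estimate} yields $U_{\zeta,k}(w) \thickapprox_{\zeta,k} (1-|w|^2)^k$ and $U_{\zeta,k}(w/r) \thickapprox_{\zeta,k} (1-|w/r|^2)^k$. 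Combined with the elementary inequality $1-|w/r|^2 \le (1-|w|^2)/r^2$ for $|w| < r$, this gives the pointwise majoration $U_{\zeta,k}(w/r) \le C_{\zeta,k}\,U_{\zeta,k}(w)$ uniformly on $r\mathbb{D}$ for $r$ close to $1$, while continuity of $U_{\zeta,k}$ on $\mathbb{D}$ (for $k \ge 2$ the factor $|z-\zeta|^{2(k-1)}$ neutralizes the logarithmic term at $z = \zeta$) delivers $U_{\zeta,k}(w/r) \to U_{\zeta,k}(w)$ pointwise. Lebesgue's dominated convergence theorem then handles the case $\int_{\mathbb{D}}|f'|^2 U_{\zeta,k}\,dA < \infty$, and Fatou's lemma forces both sides to be $+\infty$ otherwise.

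For (ii), fix $r \in (0,1)$ so that $f_r$ is holomorphic in $(1/r)\mathbb{D} \supset \overline{\mathbb{D}}$. Green's second identity applied to $u = |f_r|^2$ and $v = U_{\zeta,k}$ on $\mathbb{D}$ produces a boundary integral whose integrand involves only $U_{\zeta,k}|_{\dou\mathbb{D}}$ and $\dou_n U_{\zeta,k}|_{\dou\mathbb{D}}$, both of which vanish for $k \ge 2$ by Proposition \ref{Basic properties of Green's function}(c)-(d) taken at $j = 0$. This gives the pre-limit identity
\begin{equation*}
\int_{\mathbb{D}} U_{\zeta,k}\,\Delta(|f_r|^2)\,dA = \int_{\mathbb{D}} |f_r|^2\,\Delta U_{\zeta,k}\,dA.
\end{equation*}
As $r \to 1^-$, the left-hand side tends to $\int_{\mathbb{D}} U_{\zeta,k}\,\Delta(|f|^2)\,dA$ by (i). For the right-hand side, Proposition \ref{prop:properties-of-Green-functions}(4) writes $\Delta U_{\zeta,k}$ as the signed combination $(-1)^{k+1}[(k-1)^2 G_{k-1}(\cdot,\zeta) - (k-1)H_{k-1}(\cdot,\zeta)]$; its two summands have opposite sign and are each bounded in absolute value by $C_{\zeta,k}(1-|z|^2)^{k-2}$ on $\mathbb{D}$, with only a locally-integrable logarithmic singularity at $z = \zeta$ when $k = 2$. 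Splitting the right-hand side into its positive and negative parts and applying dominated convergence to each using the pointwise bound $|f_r(z)|^2 \le \max_{|w|\le|z|}|f(w)|^2$ identifies the limit as $\int_{\mathbb{D}}|f|^2 \Delta U_{\zeta,k}\,dA$.

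The principal obstacle lies in (ii): the sign-indefiniteness of $\Delta U_{\zeta,k}$ blocks a direct monotone-convergence passage to the limit on the right-hand side. This is resolved by the explicit sign decomposition provided by Proposition \ref{prop:properties-of-Green-functions}(4), which separates $\Delta U_{\zeta,k}$ into a non-positive term coming from $G_{k-1}(\cdot,\zeta)$ and a non-negative term coming from $H_{k-1}(\cdot,\zeta)$; dominated convergence then applies to each piece individually, with the $(1-|z|^2)^{k-2}$ boundary-decay estimate serving as the integrable dominant.
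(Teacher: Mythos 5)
Your argument for part (i) is correct and matches the paper's intent: both rely on the two-sided growth estimate \eqref{eqn:Green-estimate}--\eqref{eqn:G_k-growth} to obtain a pointwise dominant after the change of variables $w=rz$, and then invoke dominated convergence (with Fatou for the divergent case). This is exactly the argument in \cite[Proposition 2.4]{ARS96} transplanted to the higher-order Green--Almansi function, which is what the paper intends by ``mutatis mutandis.''

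For part (ii), your overall plan---Green's second identity applied to $f_r$, boundary terms killed by Proposition \ref{Basic properties of Green's function}(c)--(d) at $j=0$, the sign decomposition of $\Delta_z U_{\zeta,k}$ from Proposition \ref{prop:properties-of-Green-functions}(4), and passage to the limit---is the same skeleton the paper cites from \cite[Proposition~2.4]{ARS96} and \cite[Ch.3, Prop.~3.18]{TheoryofBergmanJhu}. However, the final limiting step as you have written it is not sound. You propose dominated convergence on each signed part of $\int_{\D}|f_r|^2\,\Delta U_{\zeta,k}\,dA$ with the dominant $\max_{|w|\le|z|}|f(w)|^2\cdot|\Delta U_{\zeta,k}(z)|$, but this dominant need not be integrable: the radial maximum modulus $M_f(|z|)=\max_{|w|\le|z|}|f(w)|$ can be much larger than $|f(z)|$ off a thin set, and there is no hypothesis guaranteeing $\int_{\D} M_f(|z|)^2(1-|z|^2)^{k-2}\,dA(z)<\infty$. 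Even in the case where the left-hand side $\int_{\D} U_{\zeta,k}\Delta(|f|^2)\,dA$ is finite---which by the $(1-|z|^2)^k$ comparison forces $\int_{\D}|f|^2(1-|z|^2)^{k-2}\,dA<\infty$---the corresponding integral of $M_f^2$ may diverge, so DCT does not apply as stated. One should instead exploit the finiteness of the left side more directly: for instance, combine uniform convergence of $|f_r|^2\to|f|^2$ on compacts with a uniform-in-$r$ smallness estimate of the tail $\int_{\rho<|z|<1}|f_r|^2|\Delta U_{\zeta,k}|\,dA$, which follows from comparing $|\Delta U_{\zeta,k}|$ to the radial weight $(1-|z|^2)^{k-2}$ and using the monotonicity in $r$ of circular means of $|f|^2$; or argue via a Fatou/monotone-convergence splitting rather than a single dominant. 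A secondary, more minor point: Green's second identity on $\D$ with $v=U_{\zeta,k}$ is not immediate because $\Delta U_{\zeta,k}$ has a (log) singularity at $z=\zeta$ when $k=2$; one must first excise a small disc around $\zeta$ and check that the boundary contributions there vanish. You acknowledge the log singularity later when bounding $\Delta U_{\zeta,k}$, but not at the point where Green's identity is first invoked.
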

\begin{proof}

    \begin{enumerate}[label=(\roman*)]
        \item  In view of \eqref{eqn:G_k-growth}, the proof follows mutatis mutandis as in \cite[Proposition 2.4]{ARS96}; see also \cite[Ch. 9, Lemma 6]{DurenBergman} for a similar version.
        \item Since $G_k$ for $k \geq 2$ enjoys the same properties as $G_2$, specifically, $G_k(z, \zeta) = \frac{\partial G_k}{\partial n}(z, \zeta) = 0$ for each $z \in \T$ and $\zeta \in \D$ (see Proposition \ref{Basic properties of Green's function}, items (c) and (d)) and item (4) of Proposition \ref{prop:properties-of-Green-functions}, the proof in \cite[Proposition~2.4]{ARS96} applies. See also \cite[Ch.3, Proposition~3.18]{TheoryofBergmanJhu}. We omit the details.
    \end{enumerate}
\end{proof}

The following proposition generalizes \cite[Proposition 2.6]{ARS96}.
\begin{proposition}
    If $g$ is a non-negative measurable function on $\D$ then for each $\zeta \in \D$ and $r \in [0,1]$, we have that 
    \begin{equation*}
        r^{2k-1} \int_{\D} g_r (z) U_{\zeta, k} (z) \, dA (z) \le \frac{k}{k-1} \int_{\D} g (z) U_{\zeta, k }(z) \, dA(z).
    \end{equation*}
\end{proposition}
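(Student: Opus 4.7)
The natural first step is a change of variables $w=rz$ in the left-hand side, which yields
\[
r^{2k-1}\int_\D g(rz)\,U_{\zeta,k}(z)\,dA(z) \;=\; r^{2k-3}\int_{r\D} g(w)\,U_{\zeta,k}(w/r)\,dA(w).
\]
Since $r\D\subseteq\D$ and $g\ge 0$, the claimed integral bound follows at once from the pointwise inequality
\[
r^{2k-3}\,U_{\zeta,k}(w/r)\;\le\;\tfrac{k}{k-1}\,U_{\zeta,k}(w),\qquad w\in r\D,\ r\in[0,1],\ \zeta\in\D.
\]

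To prove this pointwise inequality, I would invoke the two-sided bound~\eqref{eqn:Green-estimate} of Proposition~\ref{prop:properties-of-Green-functions}(3): apply its upper bound to $U_{\zeta,k}(w/r)$ and its lower bound to $U_{\zeta,k}(w)$. The constants $1/(k-1)$ and $1/k$ combine with the prefactor $k/(k-1)$ so that every numerical factor cancels, and after using $1-|w/r|^2=(r^2-|w|^2)/r^2$ and $|1-\overline\zeta w/r|=|r-\overline\zeta w|/r$, the problem reduces to the algebraic inequality
\[
(r^2-|w|^2)^k\,|1-\overline\zeta w|^2 \;\le\; r\,(1-|w|^2)^k\,|r-\overline\zeta w|^2.
\]

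To handle this, set $\alpha=\overline\zeta w$, so $|\alpha|\le|w|\le r$. A direct computation gives the $\operatorname{Re}(\alpha)$-derivative of $(\mathrm{RHS}-\mathrm{LHS})$ as $2[(r^2-|w|^2)^k - r^2(1-|w|^2)^k]\le 0$ (the last estimate uses $r^2(1-|w|^2)\ge r^2-|w|^2$, valid for $r\le 1$). Hence the worst case is $\alpha=|\alpha|\ge 0$ real; moreover the ratio $\mathrm{LHS}/\mathrm{RHS}$ as a function of $|\alpha|\in[0,|w|]$ is increasing in $|\alpha|$, so the single critical case is $\alpha=|w|$. Factoring $r^2-|w|^2=(r-|w|)(r+|w|)$ and $1-|w|^2=(1-|w|)(1+|w|)$, the surviving one-variable inequality collapses to
\[
\Bigl(\tfrac{r-|w|}{1-|w|}\Bigr)^{k-2}\cdot\tfrac{(r+|w|)^k}{r(1+|w|)^k}\;\le\;1,
\]
which I would verify via the elementary estimates $(r-|w|)/(1-|w|)\le r$ and $(r+|w|)/(1+|w|)\le 2r/(1+r)$, reducing everything to the single-variable inequality $2^k r^{2k-3}\le (1+r)^k$ on $[0,1]$, a short calculus exercise with equality at $r=1$.

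The main obstacle is the algebraic inequality itself: a naive pointwise comparison of the individual factors fails, because $|r-\overline\zeta w|^2/|1-\overline\zeta w|^2$ can be arbitrarily small even when $|r-|w||$ is bounded below, so uniform ratio bounds for the geometric factors are unavailable. The argument therefore hinges on the simultaneous reduction in $\operatorname{Re}(\alpha)$ and $|\alpha|$, pinning down the single critical configuration $\alpha=|w|$, at which both $r^2-|w|^2$ on the left and $|r-\overline\zeta w|^2$ on the right become small together and a clean cancellation of factors $(r-|w|)^2$ becomes possible.
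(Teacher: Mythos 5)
Your proposal is correct, and it agrees with the paper on the opening moves: the change of variables $w=rz$ picks up the factor $r^{-2}$, and the two-sided estimate from Proposition~\ref{prop:properties-of-Green-functions}(3) lets both you and the paper trade $U_{\zeta,k}$ for $\tilde H_k(z,\zeta)=\bigl((1-|\zeta|^2)(1-|z|^2)\bigr)^k/|1-\overline\zeta z|^2$, at the cost of exactly the ratio $\frac{k}{k-1}$. After that the two arguments come down to the same core inequality,
\[
r^{2k-3}\,\tilde H_k(w/r,\zeta)\;\le\;\tilde H_k(w,\zeta),\qquad w\in r\D,
\]
but prove it quite differently. The paper factors
\[
r^{2k-3}\tilde H_k(z/r,\zeta)=\bigl(r\,\tilde H_2(z/r,\zeta)\bigr)\,(1-|\zeta|^2)^{k-2}(r^2-|z|^2)^{k-2}
\]
and invokes the monotonicity in $r$ of $r\mapsto r\,\tilde H_2(z/r,\zeta)$ from \cite[Proposition 2.6]{ARS96}, thereby reducing all orders to the biharmonic case; this is short but leans on an external reference. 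You instead clear denominators to the polynomial inequality $(r^2-|w|^2)^k|1-\overline\zeta w|^2\le r(1-|w|^2)^k|r-\overline\zeta w|^2$ and settle it with a self-contained critical-point argument plus the one-variable estimate $2^kr^{2k-3}\le(1+r)^k$, which indeed holds for $k\ge 2$ by AM--GM ($2\sqrt r\le 1+r$ together with $r^{2-3/k}\le r^{1/2}$). What your route buys is independence from ARS96 and a completely elementary verification; what it costs is a longer calculation. One small wording issue worth cleaning up: when you say ``the $\operatorname{Re}(\alpha)$-derivative of $(\mathrm{RHS}-\mathrm{LHS})$ is $2[(r^2-|w|^2)^k-r^2(1-|w|^2)^k]$,'' you are differentiating at fixed modulus $|\alpha|$ (using $|1-\alpha|^2=1-2\operatorname{Re}\alpha+|\alpha|^2$), not at fixed $\operatorname{Im}\alpha$; that is what makes the two-step reduction --- first push $\alpha$ onto the positive real axis at fixed $|\alpha|$, then increase $|\alpha|$ to $|w|$ --- go through, and the statement should say so explicitly.
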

\begin{proof}
We define
\begin{equation*}
    \tilde{H}_k (z,\zeta) :=  \frac{ \left( \left( 1-\abs{\zeta}^{2} \right) \left( 1- \abs{z}^{2} \right) \right) ^{k} }{\abs{1-\overline{\zeta}z}^{2}}, \qquad z, \zeta \in \D.
\end{equation*}
First, we remark that the proof of \cite[Proposition 2.6]{ARS96} shows that for fixed $z, \zeta \in \D$, the function
\begin{equation*}
    r \mapsto r \tilde{H}_2 (z/r, \zeta), \qquad \abs{z} < r < 1,
\end{equation*}
is increasing. A quick computation shows that for $k \ge 2$,
\begin{equation*}
    r^{2k-3} \tilde{H}_k (z/r,\zeta) = \left(r \tilde{H}_2 (z/r, \zeta)\right) (1 - \abs{\zeta}^2)^{k-2}(r^2 - \abs{z}^2)^{k-2}.
\end{equation*}
Consequently, for fixed $z, \zeta \in \D$, the function
\begin{equation*}
    r \mapsto r^{2k-3} \tilde{H}_k (z/r, \zeta), \qquad \abs{z} < r < 1
\end{equation*}
is also increasing. Now observe that for each $\zeta \in \D$, 
\begin{align*}
    r^{2k-1} \int_{\D} g(rz) (-1)^{k+1}G_k(z,\zeta) \, dA(z) & \le \frac{1}{k-1} r^{2k-1} \int_{\D} g(rz) \tilde{H}_{k} (z, \zeta) \, dA (z) \\
    &= \frac{1}{k-1} \int_{r\D} g(z) \left( r^{2k-3} \tilde{H}_{k} (z/r , \zeta) \right) \, dA(z) \\
    &\le \frac{1}{k-1} \int_{r\D} g(z) \tilde{H}_{k} (z, \zeta) \, dA(z) \\
     &\le \frac{1}{k-1} \int_{\D} g(z) \tilde{H}_{k} (z, \zeta) dA(z)\\
    &\le \frac{k}{k-1} \int_{\D} g(z) (-1)^{k+1}G_k(z,\zeta) dA(z).
\end{align*}
We remark that the first and last inequality follows from equation \eqref{eqn:Green-estimate}. This completes the proof.
\end{proof}
The following corollary is now immediate from the previous proposition and will be useful later.

\begin{corollary}
    If $f$ is analytic on $\D$, then for each $\zeta \in \D$ and $r \in [0,1]$, we have
    \begin{equation*}
        \int_{\D} \abs{f_r^{(k)}(z)}^2 U_{\zeta, k}(z) \, dA(z) \le \frac{k}{k-1} \int_{\D} \abs{f^{(k)}(z)}^2 U_{\zeta, k}(z) \, dA(z).
    \end{equation*}
    \label{cor:f_r}
\end{corollary}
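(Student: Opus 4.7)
The plan is to deduce this directly from the preceding proposition by taking the non-negative measurable function $g(z) := |f^{(k)}(z)|^2$ on $\D$. The only input beyond the proposition is the chain-rule identity for the dilation, namely $(f_r)^{(k)}(z) = r^k f^{(k)}(rz)$, which gives $|(f_r)^{(k)}(z)|^2 = r^{2k}\, g_r(z)$.

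With this in hand, the argument is a single chain of inequalities. First I would write
\begin{equation*}
\int_{\D} |f_r^{(k)}(z)|^2\, U_{\zeta,k}(z)\, dA(z)
\;=\; r^{2k} \int_{\D} g_r(z)\, U_{\zeta,k}(z)\, dA(z)
\;=\; r \cdot \Bigl( r^{2k-1} \int_{\D} g_r(z)\, U_{\zeta,k}(z)\, dA(z)\Bigr).
\end{equation*}
Then I would invoke the preceding proposition to bound the parenthesized quantity by $\frac{k}{k-1} \int_{\D} g(z) U_{\zeta,k}(z)\, dA(z)$, and finally use $r \leqslant 1$ to absorb the leading factor of $r$. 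Unwinding $g = |f^{(k)}|^2$ yields exactly the claimed inequality.

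There is no real obstacle here; this is a bookkeeping corollary that packages the proposition in the analytic form that will be needed later (presumably when passing from $f_r$ to $f$ in the Local Douglas formula for $D_{\zeta,k}$). The only point that warrants care is making sure the power of $r$ matches: the proposition carries a factor of $r^{2k-1}$ on the left, while the chain rule produces $r^{2k}$, and the bookkeeping must account for this one extra factor of $r \leqslant 1$.
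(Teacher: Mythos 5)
Your argument is correct and is exactly what the paper intends — it labels the corollary "immediate from the previous proposition" and gives no separate proof, and the intended route is precisely the substitution $g = |f^{(k)}|^2$ together with the chain rule $(f_r)^{(k)}(z) = r^k f^{(k)}(rz)$ and the bound $r \leqslant 1$. Your bookkeeping on the one extra factor of $r$ is the only nontrivial point, and you handle it correctly.
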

\section{Local Dirichlet Spaces and a generalized Littlewood-Paley Formula}

For each $\zeta \in \overline{\D }$ and $k \ge 1$, we consider the family of semi-norm $\sqrt{D _{\zeta, k } \left( \cdot \right)}$ on $\operatorname{Hol} \left( \D \right),$  defined by
\begin{equation*}
   D_{\zeta,k} (f) := \begin{dcases} 
        \frac{1}{( 1-\abs{\zeta}^{2} )^{k} ( ( k-1 )! ) ^{2}} \int_{\D} \abs{f^{\left( k \right)}\left( z\right) }^2 U_{\zeta, k} \left( z \right) \mathrm{d}A (z), & \zeta \in \D \\
        \frac{1}{k! (k-1)!} \int_{\D}  \abs{f^{\left( k \right)}\left( z\right) }^2 P_{\delta_{\zeta}}(z) (1-\abs{z}^2)^{k-1} \, \mathrm{d}A(z), & \zeta \in \T.
   \end{dcases}
\end{equation*}
For $k\in \mathbb Z_{\geqslant 1},$ we define the \textit{local Dirichlet space} $\mathscr D_{\zeta,k }$ by
\begin{equation*}
  \mathscr D_{\zeta,k } := \left\{ f \in \operatorname{Hol} \left( \D \right) \, : \, D _{\zeta, k}(f)< \infty \right\}.
\end{equation*}

We now explore the relationship between the Local Dirichlet space $\mathscr D_{\zeta,k }$ and the classical weighted Hardy space $\mathscr{D} _{\alpha}.$
Recall that for $\alpha \in  \mathbb R$ the $\mathscr{D} _{\alpha}$ space is defined by
\begin{equation*}
    \mathscr{D}_{\alpha} :=  \left\{ f = \sum_{n \ge 0} a_{n} z^{n} \in \operatorname{Hol} \left( \D \right)\, : \, \norm{f}_{\alpha}^{2} = \sum_{n \ge 0} \left( n+1 \right)^{\alpha} \abs{a_{n}}^{2} < \infty  \right\}.
    \end{equation*}
Let $k\in \mathbb Z_{\geqslant 1.}$ We consider the semi-norm $D_{\sigma,k}(f)$ on $\operatorname{Hol}(\D)$ given by 
\begin{align*}
    D_{\sigma,k}(f)= \frac{1}{k! (k-1)!}\int_{\D} |f^{(k)}(z)|^2 (1-|z|^2)^{k-1}\, dA(z).
\end{align*}
Let $f\in \operatorname{Hol}(\D)$ and $f(z)= \sum_{n\geqslant 0} a_n z^n.$ A straightforward computation will give us that 
\begin{align*}
    D_{\sigma,k}(f)= \sum\limits_{n=k}^{\infty} \binom{n}{k}|a_n|^2,
\end{align*} 
see \cite[Lemma 3.2]{Rydhe19}. Therefore it follows that $D_{\sigma,k}(f) < \infty$ if and only if $f\in \mathscr D_k.$  For $k \in \mathbb Z_{\geqslant 1},$  we put an equivalent norm on $\mathscr D _{k},$  by
\begin{equation*}
    \norm{f}_{\mathscr D _{k}} ^{2} = \norm{f}_{H^{2}}^{2} + D_{\sigma, k} \left( f \right), \qquad f \in \mathscr D_{k}.
\end{equation*}

In the next proposition, we find that $\mathscr D_{\zeta,k } =\mathscr D_{k-1}$ for each $k \ge 1$ and $\zeta \in \mathbb D$ and consequently, the following norm on $\mathscr D_{\zeta, k}$ given by
\begin{equation*}
    \norm{f}_{\mathscr D{_{\zeta, k}}} ^{2}  := \norm{f}_{H^{2}}^{2} +D_{\zeta,k} \left( f \right), \qquad f\in \operatorname{Hol} \left( \D \right) .
\end{equation*}
defines an equivalent norm on $\mathscr D_{k-1}$.

\begin{proposition} \label{prop:equivalence-of-norm}
    For each $k \in \mathbb Z_{\geqslant 1}$ and $\zeta \in \mathbb D$, we have that $\mathscr D_{ \zeta, k} = \mathscr{D}_{k-1} \subset H^{2}$ and there are constants $c_{k, \zeta}, C_{k, \zeta} > 0$ such that 
    \begin{equation}
	c_{k, \zeta}  \norm{f}_{\mathscr D{_{\zeta, k}}} ^{2}  \le \norm{f}_{\mathscr{D}_{k-1}}^{2} \le C_{k, \zeta}  \norm{f}_{\mathscr D{_{ \zeta, k}}} ^{2},  \qquad f\in \operatorname{Hol} \left( \D \right).
    \label{eqn:equivalent-norms-on-Dk}
    \end{equation}
\end{proposition}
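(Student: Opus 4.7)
The plan is to reduce $D_{\zeta,k}(f)$ to a tractable weighted Bergman integral with the clean weight $(1-|z|^2)^k$, and then use a Parseval plus Beta-function computation to compare both sides of \eqref{eqn:equivalent-norms-on-Dk} to a common weighted $\ell^2$-sum of Taylor coefficients of $f$.

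For $k \ge 2$, the two-sided estimate \eqref{eqn:G_k-growth} from Proposition \ref{prop:properties-of-Green-functions} gives $U_{\zeta,k}(z) \thickapprox_{\zeta,k} (1-|z|^2)^k$ on $\D$, so absorbing the $(1-|\zeta|^{2})^{-k}((k-1)!)^{-2}$ prefactor into the $\zeta,k$-constants yields
\[
D_{\zeta,k}(f) \thickapprox_{\zeta,k} \int_{\D} |f^{(k)}(z)|^2 (1-|z|^2)^k \, dA(z).
\]
Expanding $f(z)=\sum_{n \ge 0} a_n z^n$, applying Parseval on circles of radius $r$, and using the Beta identity $\int_0^1 u^{n-k}(1-u)^k\,du=\frac{(n-k)!\,k!}{(n+1)!}$ give the exact formula
\[
\int_{\D} |f^{(k)}(z)|^2 (1-|z|^2)^k \, dA(z) = k!\sum_{n \ge k} \frac{n!}{(n-k)!(n+1)} |a_n|^2.
\]
Since $\frac{n!}{(n-k)!(n+1)}=\frac{n(n-1)\cdots(n-k+1)}{n+1}\thickapprox_k (n+1)^{k-1}$ for $n \ge k$, this sum is comparable to $\sum_{n \ge k}(n+1)^{k-1}|a_n|^2$. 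Because $(n+1)^{k-1}$ is bounded by a $k$-dependent constant for $0 \le n < k$, adding $\|f\|_{H^2}^2=\sum_{n \ge 0}|a_n|^2$ absorbs the missing low-order terms and produces
\[
\|f\|_{H^2}^2 + D_{\zeta,k}(f) \thickapprox_{\zeta,k} \sum_{n \ge 0}(n+1)^{k-1}|a_n|^2 = \|f\|_{\mathscr{D}_{k-1}}^2
\]
for every $f \in \operatorname{Hol}(\D)$. This simultaneously yields the identification $\mathscr{D}_{\zeta,k}=\mathscr{D}_{k-1}$, the inclusion $\mathscr{D}_{\zeta,k}\subset H^2$ (since $(n+1)^{k-1}\ge 1$ forces $\|f\|_{H^2}\le \|f\|_{\mathscr{D}_{k-1}}$), and the norm comparison \eqref{eqn:equivalent-norms-on-Dk}.

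The main obstacle is the case $k=1$, for which the estimate \eqref{eqn:G_k-growth} is not available since $U_{\zeta,1}=G_{1}(\cdot,\zeta)$ has a logarithmic singularity at $\zeta$ and only decays like $(1-|z|^2)$ near $\T$. I would handle this by first using the Mobius-invariance \eqref{eqn:Green-Mobius} (which for $k=1$ reads $G_{1}(\varphi_a(z),\varphi_a(\zeta))=G_{1}(z,\zeta)$) together with a change of variable $w=\varphi_\zeta(z)$ to reduce $D_{\zeta,1}(f)$ to $D_{0,1}(f\circ\varphi_\zeta)$ up to constants depending on $\zeta$. Since $G_{1}(z,0)=\log(1/|z|^2)$, the classical Littlewood–Paley identity $\int_{\D}|g'(z)|^2\log(1/|z|^2)\,dA(z)=\|g\|_{H^2}^2-|g(0)|^2$ then gives $D_{0,1}(g)=\|g\|_{H^2}^2-|g(0)|^2$ directly, and transferring back via $\varphi_\zeta$ (which is a bounded invertible composition operator on $H^2$) yields $D_{\zeta,1}(f)\thickapprox_\zeta \|f\|_{H^2}^2-|f(\zeta)|^2$. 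Combined with $\|f\|_{H^2}^2$, this produces the desired equivalence with $\|f\|_{\mathscr{D}_0}^2=\|f\|_{H^2}^2$ and completes the $k=1$ case.
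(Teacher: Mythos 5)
Your proof is correct, and for $k \ge 2$ it takes a genuinely different route from the paper. Both proofs start from the same key input, the two-sided estimate \eqref{eqn:G_k-growth} giving $U_{\zeta,k}(z) \thickapprox_{\zeta,k}(1-|z|^2)^k$. The paper uses it only to establish the set equality $\mathscr D_{\zeta,k}=\mathscr D_{k-1}$ (via the characterization that $f\in\mathscr D_\alpha$ iff $f'\in\mathscr D_{\alpha-2}$), and then proves the two norm inequalities separately: the inequality $D_{\zeta,k}(f)\lesssim_{k,\zeta}D_{\sigma,k-1}(f)$ is obtained through the Laplacian--Green exchange formula of Proposition \ref{prop:analytic-Gk-properties}(ii) and the identity for $\Delta_z G_k$ in Proposition \ref{prop:properties-of-Green-functions}(4), and the reverse inequality is obtained abstractly by establishing completeness of $\mathscr D_{\zeta,k}$ and invoking the Bounded Inverse Theorem. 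Your approach instead carries the two-sided estimate all the way through: the Parseval plus Beta-function computation
$\int_{\D}|f^{(k)}|^2(1-|z|^2)^k\,dA = k!\sum_{n\ge k}\frac{n!}{(n-k)!(n+1)}|a_n|^2 \thickapprox_k \sum_{n\ge k}(n+1)^{k-1}|a_n|^2$
delivers both inequalities simultaneously with explicit constants and avoids any appeal to completeness or the closed graph/open mapping machinery, which is a cleaner and more constructive argument. For $k=1$ the paper is actually slightly shorter, quoting the Littlewood--Paley identity directly as $D_{\zeta,1}(f)=\|(f-f(\zeta))/(z-\zeta)\|_{H^2}^2\lesssim_\zeta\|f\|_{H^2}^2$, whereas your M\"obius change of variable is a small detour. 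One minor imprecision in your $k=1$ paragraph: the conclusion $D_{\zeta,1}(f)\thickapprox_\zeta\|f\|_{H^2}^2-|f(\zeta)|^2$ does not literally follow from $\|f\circ\varphi_\zeta\|_{H^2}^2\thickapprox_\zeta\|f\|_{H^2}^2$, since two-sided comparability is not preserved under subtraction of the common term $|f(\zeta)|^2$; what you actually have is the exact identity $D_{\zeta,1}(f)=\frac{1}{1-|\zeta|^2}\bigl(\|f\circ\varphi_\zeta\|_{H^2}^2-|f(\zeta)|^2\bigr)$, from which the needed one-sided bound $D_{\zeta,1}(f)\lesssim_\zeta\|f\|_{H^2}^2$ (and hence the norm equivalence with $\|f\|_{H^2}^2$, since $D_{\zeta,1}(f)\ge 0$) follows directly. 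With that wording cleaned up, the argument is complete.
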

\begin{proof}
The fact that $\mathscr D_{\zeta, 1} = \scrD_{0}$ follows from \cite[Proposition 2.6]{1993multiplication}. Now, we show that $\mathscr D_{ \zeta, k} = \scrD_{k-1}$ for every $k\ge 2.$ First, we note that an application of \eqref{eqn:G_k-growth} shows that for a holomorphic function $f$ on $\D$, the semi-norm $D_{\zeta, k} (f) < \infty$ if and only if the integral $\int_{\D} \abs{f^{(k)} (z)}^2 (1-\abs{z}^2)^{k} \mathrm{d}A(z) < +\infty.$ To see $\mathscr D_{\zeta, k} \subset \scrD_{k-1}$, take any $f \in \mathscr D_{\zeta, k}$. Consider a $g \in \operatorname{Hol} (\D)$ such that $g'=f$. Since $D_{\zeta,k } (f) < \infty$, we have 
\begin{align*}
    \int_{\D} \abs{f^{(k)} (z)}^2 (1-\abs{z}^2)^{k} \mathrm{d}A(z) = \int_{\D} \abs{g^{(k+1)} (z)}^2 (1-\abs{z}^2)^{k} < +\infty
\end{align*}
 Thus, $g^{(k+1)}\in \scrD_{-k-1},$ see \cite[p. 229]{Taylor66}. Consequently, we have that $f =g' \in \scrD_{k-1}$. The last statement is true because of the fact that $f \in \scrD_{\alpha}$ if and only if $f' \in \scrD_{\alpha-2}$. Therefore we find that $\mathscr D_{\zeta,k}\subset \mathscr D_{k-1}.$ The reverse inclusion follows by reversing the argument.

    We proceed to prove inequalities in \eqref{eqn:equivalent-norms-on-Dk}.
        Fix $\zeta \in \mathbb D$. The case $k=1$ can be found in \cite{1993multiplication} but we sketch the proof here for the sake of completeness. Observe that for $f \in \operatorname{Hol} \left( \D \right)$, we have 
    \begin{align*}
	D_{\zeta, 1}(f) &= \frac{1}{(1-|\zeta|^2)} \int_{\D} \abs{f'\left( z \right)}^{2} G_{1} \left( z, \zeta \right) dA \left( z \right) \\
	&= \norm{\frac{f-f\left( \zeta \right)}{z-\zeta}}_{H^{2}}^{2} \lesssim_{\zeta} \norm{f}_{H^{2}}^{2} = D_{\sigma, 0} \left( f \right).
    \end{align*}
    We remark that the second equality is true by Littlewood-Paley theorem for Hardy spaces (see \cite[Chapter VI, Lemma 3.1]{GarnettBAF}). Consequently, we have that
    \begin{equation*}
	\norm{f}_{H^{2}}^{2} \le \norm{f}_{\mathscr D_{\zeta, 1}}^{2} \lesssim_{\zeta} \norm{f}_{H^{2}}^{2}, \qquad  f \in \operatorname{Hol} \left( \D \right).
    \end{equation*} 
Now assume $k \ge 2$. First note that 
     \begin{equation}
         \frac{1-\abs{\zeta}^2}{(1+\abs{\zeta})^2} \le \frac{1-\abs{z\zeta}^2}{\abs{1-\overline{z}\zeta}^{2}} \le \frac{1}{(1-\abs{\zeta})^2}, \qquad z, \zeta\in \D.
         \label{eqn:bound-Hk}
    \end{equation}
    We also note that that it follows from \eqref{eqn:bound-Hk} that  $D_{\sigma, k-1} (f) < + \infty$ if and only if for each $\zeta \in \D$, $$\int_{\D} \abs{f^{(k-1)} \left( z \right)}^2 \left( (-1)^{k} H_{k-1} \left( z, \zeta \right) \right)\mathrm{d}A\left( z \right) < +\infty.$$
    Now we will establish the first inequality of equation \eqref{eqn:equivalent-norms-on-Dk}. If $D_{\sigma, k-1}(f) = +\infty$, then there is nothing to do. So, we may assume $D_{\sigma, k-1} (f) < +\infty.$  Now to establish the first inequality, take any $f \in \operatorname{Hol} \left( \D \right)$ and consider the following:
    \begin{align*}
	D_{\zeta, k} \left( f \right) &= \frac{1}{(1-\abs{\zeta}^2)^{k} ((k-1)!)^2} \int_{\D} \abs{f ^{\left( k \right)} (z)} ^{2} U_{\zeta, k} (z) \mathrm{d}A(z) \\
	&= \frac{\left( k-1 \right)^{2}}{(1-\abs{\zeta}^2)^{k} ((k-1)!)^2} \int_{\D} \abs{f^{(k-1)} \left( z \right)}^2 \left((-1)^{k+1} G_{k-1} \left( z, \zeta \right)  \right) \mathrm{d}A\left( z \right) \\ & \hspace{10mm}+ \frac{\left( k-1 \right)}{(1-\abs{\zeta}^2)^{k} ((k-1)!)^2} \int_{\D} \abs{f^{(k-1)} \left( z \right)}^2 \left( (-1)^{k} H_{k-1} \left( z, \zeta \right) \right)\mathrm{d}A\left( z \right) \\
	&\le  \frac{\left( k-1 \right)}{(1-\abs{\zeta}^2) ((k-1)!)^2} \int_{\D} \abs{f^{\left( k-1 \right)}\left( z \right)}^{2} \left( 1-\abs{z}^{2} \right)^{k-2} \left( \frac{1-\abs{z\zeta}^{2}}{\abs{1-\overline{z}\zeta}^{2}} \right) \mathrm{d}A\left( z \right) \\
	&\lesssim_{k, \zeta} D_{\sigma, k-1} \left( f \right).
    \end{align*}
    
    We remark that the second equality is true by item $(ii)$ of Proposition \ref{prop:analytic-Gk-properties}. The third inequality holds true because $(-1)^{k+1} G_{k-1} (z, \zeta) \le 0$ by Proposition \ref{prop:properties-of-Green-functions} (3). The last inequality follows from equation \eqref{eqn:bound-Hk}. This establishes our first inequality of \eqref{eqn:equivalent-norms-on-Dk}.

        Now, we proceed to show the second inequality.  We show that $\left( \mathscr{D}_{\zeta, k}, \lVert \cdot \rVert _{\mathscr D_{\zeta, k}} \right)$ is Hilbert space. Then we will obtain the second inequality by virtue of the Bounded Inverse Theorem. However, this proof is similar to that of \cite[Theorem 1.6.3]{Primerbook}, so, we omit the details. 
\end{proof}

\begin{corollary}
    Polynomials are dense in $\mathscr D_{\zeta, k}$.
\end{corollary}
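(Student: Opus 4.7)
The plan is to read off the corollary directly from Proposition \ref{prop:equivalence-of-norm}. That proposition identifies $\mathscr D_{\zeta,k}$ with $\mathscr D_{k-1}$ as a vector space and gives equivalent norms on it, so it is enough to verify that polynomials are dense in $(\mathscr D_{k-1}, \|\cdot\|_{k-1})$, which is the standard weighted $\ell^2$ space.

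Given $f = \sum_{n\ge 0} a_n z^n \in \mathscr D_{k-1}$, I would take the Taylor partial sums $s_N f := \sum_{n=0}^N a_n z^n$, each of which is a polynomial. Because $\|f\|_{k-1}^2 = \sum_{n\ge 0}(n+1)^{k-1}|a_n|^2 < \infty$, the remainder
\[
\|f - s_N f\|_{k-1}^2 = \sum_{n > N}(n+1)^{k-1}|a_n|^2
\]
is the tail of a convergent series and therefore tends to zero as $N \to \infty$. Plugging $f - s_N f$ into the left-hand inequality of \eqref{eqn:equivalent-norms-on-Dk} then yields
\[
\|f - s_N f\|_{\mathscr D_{\zeta,k}}^2 \le c_{k,\zeta}^{-1}\,\|f - s_N f\|_{k-1}^2 \longrightarrow 0,
\]
which proves density.

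There is essentially no obstacle here, since the genuine work has already been carried out in Proposition \ref{prop:equivalence-of-norm}. A more intrinsic route that avoids comparison with $\mathscr D_{k-1}$ would first show that the dilates $f_r$ converge to $f$ in the $\mathscr D_{\zeta,k}$-norm as $r \to 1^-$, using Corollary \ref{cor:f_r} together with a dominated convergence argument applied to the pointwise-vanishing difference $f^{(k)}(z) - r^k f^{(k)}(rz)$ against the weight $U_{\zeta,k}$, and then approximate each $f_r$ uniformly on $\overline{\D}$ (together with all derivatives) by its Taylor polynomials. This alternative fits nicely with the dilation estimates established just above, but is longer than necessary given the norm equivalence already in hand.
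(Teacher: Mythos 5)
Your argument is correct and is exactly the route the paper takes: the paper's one-line proof also reads the density off from Proposition \ref{prop:equivalence-of-norm}, transferring the (standard) density of polynomials in $\mathscr D_{k-1}$ to $\mathscr D_{\zeta,k}$ via the norm equivalence. Your proposal simply spells out the Taylor-partial-sum tail estimate that the paper leaves implicit.
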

\begin{proof}
    Since polynomials are dense in $\mathscr{D}_{k}$ spaces, the previous proposition implies that the polynomials are also dense in $\mathscr D_{\zeta, k}.$
\end{proof}

We proceed to prove the main result of this section, namely Theorem \ref{thm:Littlewood-Paley-Higher-Order}, which can be thought as a generalized Littlewood-Paley formula for the higher order weighted Dirichlet integral. First we mention a  combinatorial identity and a lemma which will help us in establishing the main result.

\begin{lemma} For $r \in \R$ and for $k, m \in \mathbb Z _{\ge 0}$ with $k\le m$, we have
    \begin{equation*}
    \sum _{l=k}^m \binom{l}{k} r^{m-l}+(1-r) \sum _{l=k+1}^m \binom{l}{k+1} r^{m-l} = \binom{m+1}{k+1}.
\end{equation*}
\label{lem:sum-lem}
\end{lemma}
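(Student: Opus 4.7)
The plan is to prove the identity directly by a Pascal-rule telescoping; induction on $m$ is an equally clean alternative backup plan.

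For the telescoping approach, I rewrite $\binom{l}{k} = \binom{l+1}{k+1} - \binom{l}{k+1}$ in the first sum. This splits it as
\[
\sum_{l=k}^{m} \binom{l}{k} r^{m-l} = \sum_{l=k}^{m} \binom{l+1}{k+1} r^{m-l} - \sum_{l=k}^{m} \binom{l}{k+1} r^{m-l}.
\]
I would then reindex the first piece with $j = l+1$ so that it becomes
\(\binom{m+1}{k+1} + r \sum_{j=k+1}^{m} \binom{j}{k+1} r^{m-j},\)
and note that in the second piece the $l=k$ term vanishes since $\binom{k}{k+1}=0$, leaving $\sum_{l=k+1}^{m}\binom{l}{k+1}r^{m-l}$. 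Combining,
\[
\sum_{l=k}^{m} \binom{l}{k} r^{m-l} = \binom{m+1}{k+1} - (1-r)\sum_{l=k+1}^{m} \binom{l}{k+1} r^{m-l},
\]
which rearranges to exactly the claimed identity.

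As a backup, an induction on $m \ge k$ also goes through painlessly. The base case $m=k$ is immediate since the left-hand side collapses to $1 = \binom{k+1}{k+1}$. For the inductive step, I would peel off the $l = m+1$ terms to write the left-hand side at stage $m+1$ as $r \cdot (\text{LHS}_m) + \binom{m+1}{k} + (1-r)\binom{m+1}{k+1}$; applying the inductive hypothesis collapses this to $r\binom{m+1}{k+1} + \binom{m+1}{k} + (1-r)\binom{m+1}{k+1} = \binom{m+1}{k+1} + \binom{m+1}{k}$, which equals $\binom{m+2}{k+1}$ by Pascal's rule.

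I do not anticipate any real obstacle: this is a short algebraic verification. The only care required is keeping the boundary conventions straight (using $\binom{k}{k+1}=0$ to shift the lower limit of summation) and correctly tracking the single boundary term $\binom{m+1}{k+1}$ that emerges from the reindexing.
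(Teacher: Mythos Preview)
Your proposal is correct. Your backup induction on $m\ge k$ using Pascal's identity is exactly the argument the paper gives; your primary telescoping proof is a clean non-inductive rearrangement of the same Pascal step, so the two approaches are essentially the same.
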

\begin{proof}
    
The proof of this lemma follows immediately by fixing $k \in \mathbb{Z}_{\ge 0}$ and performing induction on $m \ge k$ and using Pascal's identity.

\end{proof}

 \begin{lemma}\label{lem:H_k-space}
    Let $m, n, k \in \mathbb{Z}_{\ge 0}$ with $k \le m \le n$. For $f(z) = z^m$ and $g(z) = z^n$, we have
    \begin{equation*}
        (-1)^{k-1} \int_{\D} f^{(k)}(z) \overline{g^{(k)}(z)}  H_k(z, \zeta) \, dA(z) =  (1-|\zeta|^2)^{k} \overline{\zeta}^{n-m} \frac{m! (k-1)!}{(m-k)!}.
    \end{equation*}
\end{lemma}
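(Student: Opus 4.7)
The plan is to compute the integral directly by expanding the kernel $\tfrac{1-|z\zeta|^{2}}{|1-\bar z\zeta|^{2}}$ as a bilateral Fourier series in the angular variable of $z$ and then applying orthogonality together with a Beta integral for the radial part.

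First, I would substitute the explicit formula
\begin{equation*}
(-1)^{k-1}H_{k}(z,\zeta) = (1-|\zeta|^{2})^{k}(1-|z|^{2})^{k-1}\frac{1-|z\zeta|^{2}}{|1-\bar z\zeta|^{2}}
\end{equation*}
and the derivatives $f^{(k)}(z)=\tfrac{m!}{(m-k)!}z^{m-k}$, $g^{(k)}(z)=\tfrac{n!}{(n-k)!}z^{n-k}$. Pulling the constants and the $(1-|\zeta|^{2})^{k}$ factor outside, the claim reduces to proving
\begin{equation*}
\int_{\D}z^{m-k}\bar z^{n-k}(1-|z|^{2})^{k-1}\frac{1-|z\zeta|^{2}}{|1-\bar z\zeta|^{2}}\,dA(z) = \bar\zeta^{n-m}\frac{(n-k)!(k-1)!}{n!}.
\end{equation*}

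Next I would use the (elementary) partial-fraction identity
\begin{equation*}
\frac{1-|z\zeta|^{2}}{|1-\bar z\zeta|^{2}} = \frac{1}{1-\bar z\zeta}+\frac{z\bar\zeta}{1-z\bar\zeta} = 1 + \sum_{l=1}^{\infty}\bigl[\zeta^{l}\bar z^{l}+\bar\zeta^{l}z^{l}\bigr],
\end{equation*}
which follows from $1-z\bar\zeta\cdot\bar z\zeta=(1-z\bar\zeta)+z\bar\zeta(1-\bar z\zeta)$ and converges uniformly on $r\overline{\D}$ for each $r<1$ with $\zeta\in\D$ fixed. In polar coordinates $z=re^{i\theta}$ with $dA=\tfrac{1}{\pi}r\,dr\,d\theta$, the factor $z^{m-k}\bar z^{n-k}$ contributes the angular exponential $e^{-i(n-m)\theta}$, so $\theta$-orthogonality isolates exactly the $l=n-m$ term of the series (the constant $1$ if $n=m$, and $\bar\zeta^{n-m}z^{n-m}$ otherwise), both producing the factor $\bar\zeta^{n-m}r^{n-m}e^{i(n-m)\theta}$. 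After $\theta$-integration the $2\pi/\pi$ cancels and the remaining radial integral is
\begin{equation*}
2\bar\zeta^{n-m}\int_{0}^{1}r^{2(n-k)+1}(1-r^{2})^{k-1}\,dr = \bar\zeta^{n-m}B(n-k+1,k) = \bar\zeta^{n-m}\frac{(n-k)!(k-1)!}{n!},
\end{equation*}
using the substitution $u=r^{2}$.

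Finally, multiplying back by the prefactors $(1-|\zeta|^{2})^{k}\cdot\tfrac{m!n!}{(m-k)!(n-k)!}$ cancels $n!$ against $(n-k)!$ and yields the asserted value $(1-|\zeta|^{2})^{k}\bar\zeta^{n-m}\tfrac{m!(k-1)!}{(m-k)!}$. The only non-routine step is the partial-fraction decomposition, which eliminates the denominator $|1-\bar z\zeta|^{2}$ and turns the problem into elementary Fourier orthogonality plus a Beta integral; everything else is bookkeeping, and uniform convergence of the series on the relatively compact disks $r\overline{\D}$ (combined with monotone convergence as $r\to 1^{-}$) justifies the termwise integration.
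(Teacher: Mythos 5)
Your proof is correct, and it travels a somewhat different road than the paper. The paper rewrites the integral (up to the sign $(-1)^{k-1}$ and the constant $\tfrac{m!n!}{(m-k)!(n-k)!}$) as an inner product $\bigl\langle \tfrac{z^{m-k}}{1-\overline\zeta z},\, \tfrac{z^{n-k}}{1-\overline\zeta z}\bigr\rangle$ in the weighted Bergman space $\calA(w_k)$ with weight $w_k(z)=(1-|z|^2)^{k-1}(1-|z\zeta|^2)$; it then expands each argument as $(\overline\zeta)^{-m+k}\sum_{j\geq m-k}(\overline\zeta z)^j$, uses orthogonality of monomials in $\calA(w_k)$ together with the explicit norm $\|z^n\|^2_{\calA(w_k)}=\tfrac{n!(k-1)!}{(n+k)!}\bigl(1-\tfrac{(n+1)|\zeta|^2}{n+k+1}\bigr)$, and finally evaluates the resulting telescoping sum. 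You instead apply a partial-fraction identity to the kernel $\tfrac{1-|z\zeta|^2}{|1-\bar z\zeta|^2}$, turning it into a bilateral Fourier series, so that $\theta$-orthogonality isolates the single frequency $l=n-m$ and a Beta integral finishes. The two routes are close in spirit --- both rest on orthogonality of powers of $z$ after a geometric-series expansion --- but yours avoids introducing the auxiliary Hilbert space $\calA(w_k)$ and computing its monomial norms, and it replaces the telescoping sum by a direct Euler Beta integral, which is arguably more elementary. One small caveat on your convergence justification: since the series terms are complex-valued, monotone convergence in $r$ does not apply to them termwise. Cleaner is to observe that for fixed $\zeta\in\D$ the kernel $\tfrac{1-|z\zeta|^2}{|1-\bar z\zeta|^2}$ is bounded on $\D$, so the integrand is absolutely integrable, and the series $\sum_{l\geq1}(\zeta^l\bar z^l+\overline{\zeta}^l z^l)$ is dominated by $\tfrac{2|\zeta z|}{1-|\zeta z|}$, which immediately justifies Fubini--Tonelli and termwise integration. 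This is cosmetic; the computation is sound.
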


\begin{proof}

    First, we define the weight $w_{k} \left( z \right) = \left( 1-\abs{z}^{2}  \right)^{k-1} \left( 1-\abs{z\zeta}^{2} \right)$ and consider the weighted Bergman space given by 
    \begin{equation*}
        \calA (w_{k}) := \left\{ h \in \operatorname{Hol}\left( \D \right) \, : \, \norm{h}^{2} = \int_{\D} \abs{h\left( z \right)}^{2} w_{k} \left( z \right) dA\left( z \right) < \infty \right\}.
    \end{equation*}

    It is straightforward to verify that $\{z^n: n\in \mathbb Z_{\geqslant 0}\}$ is a family of orthogonal vectors in $ \calA (w_{k})$ and 
    \begin{equation*}
        \norm{z^{n}}_{\calA \left( w_{k} \right)} ^{2} =  \frac{n! (k-1)!}{(n+k)!} \left( 1- \frac{\left( n+1 \right) \abs{\zeta}^{2}}{n+k+1} \right),\qquad n\in \mathbb Z_{\geqslant 0}.
    \end{equation*}

    Now, let $m, n, k \in \mathbb{Z}_{\ge 0}$ with $k \le m \le n$. For $f(z) = z^m$ and $g(z) = z^n$, consider the following:
    \begin{equation}
        \int_{\D} f^{(k)}\left( z \right) \overline{g^{(k)} \left( z \right)} \frac{H_{k} \left( z, \zeta \right)}{ ( 1- \abs{\zeta}^{2} )^{k}} \mathrm{d}A\left( z \right)
         =  \frac{m! n!}{(m-k)! (n-k)!} \int_{\D} \frac{z^{m-k}}{1-\overline{\zeta} z} \overline{\left(  \frac{z^{n-k}}{1-\overline{\zeta} z} \right)} w_{k} \left( z \right) \mathrm{d}A \left( z \right).
         \label{eqn:monomial_H_k-1}
    \end{equation}

    Moreover,
    \begin{align}
        \begin{split}
            \int_{\D} \frac{z^{m-k}}{1-\overline{\zeta} z} \overline{\left(  \frac{z^{n-k}}{1-\overline{\zeta} z} \right)} w_{k} \left( z \right) \mathrm{d}A \left( z \right) 
            &= \ip{\frac{z^{m-k}}{1 -\overline{\zeta} z},\frac{z^{n-k}}{1-\overline{\zeta} z}}_{\calA (w_k)} \\
            &= \ip{ (\overline{\zeta})^{-m+k} \sum_{j \ge m-k} \left( \overline{\zeta}z \right)^j , (\overline{\zeta})^{-n+k} \sum_{l \ge n-k} \left( \overline{\zeta}z \right)^l }_{\calA (w_k)} \\
            &= (\overline{\zeta})^{-m+k} (\zeta)^{-n+k} \sum_{j \ge n-k} \abs{\zeta}^{2j} \frac{j! (k-1)!}{(j+k)!} \left( 1- \frac{\left( j+1 \right) \abs{\zeta}^{2}}{j+k+1} \right) \\
            &=  (\overline{\zeta})^{-m+k} (\zeta)^{-n+k} \left( \abs{\zeta}^{2(n-k)} \frac{(n-k)! (k-1)!}{n!} \right) \\
            &=  \frac{(n-k)! (k-1)!}{n!} (\overline{\zeta})^{n-m}.
        \end{split}
        \label{eqn:monomial_H_k-2}
    \end{align}

    We remark that the fourth equality of \eqref{eqn:monomial_H_k-2} holds because the sum is telescoping and $\zeta \in \D$. The computations in equations \eqref{eqn:monomial_H_k-1} and \eqref{eqn:monomial_H_k-2} complete the proof.
\end{proof}

We can now start the proof of Theorem \ref{thm:Littlewood-Paley-Higher-Order}.  We will use the notation $D _{\zeta, k } \left( \cdot, \cdot \right) $ and $D _{\sigma, k-1 } \left( \cdot, \cdot \right) $ to denote the semi-inner products induced by the semi-norms $\sqrt{D _{\zeta, k } \left( \cdot \right)}$ and  $\sqrt{D _{\sigma, k-1 } \left( \cdot \right)}$ respectively.

\begin{proof}[Proof of Theorem \ref{thm:Littlewood-Paley-Higher-Order}]
        We first claim that for $m, n, k \in \Z_{\ge 0}$ and $\zeta \in \D$,  
    \begin{equation}  
        D_{\zeta, k} (z^m, z^n) = D_{\sigma, k-1} \left( \frac{z^m - \zeta^m}{z-\zeta}, \frac{z^n - \zeta^n}{z-\zeta} \right).  
        \label{eqn:local-douglas-for-monomials}  
    \end{equation}  
    
    We prove this claim by induction on $k$. Without loss of generality, we may assume $k \le m \le n$; otherwise, if either $m$ or $n$ is strictly less than $k$, both sides of \eqref{eqn:local-douglas-for-monomials} vanish, and the result holds trivially. The base case $k = 1$ follows from the Littlewood--Paley Theorem for Hardy spaces (see \cite[Chapter VI, Lemma 3.1]{GarnettBAF}).

    Assume that the theorem holds for some $k-1$, where $k \ge 2$. By induction hypothesis, we have that for all $m, n \in \mathbb Z _{\ge 0}$ satisfying $k-1 \le m \le n$,
    \begin{align}
        \begin{split}
            D_{\zeta, k-1} (z^m, z^n) &= D_{\sigma, k-2} \left( \frac{z^m - \zeta^m}{z-\zeta}, \frac{z^n - \zeta^n}{z-\zeta} \right) \\
            &= D_{\sigma, k-2} \left( \sum_{l=0}^{m-1} \zeta ^{m-1-l} z^{l}, \sum_{l=0}^{n-1} \zeta ^{n-1-l} z^{l} \right) \\
            &= \sum_{l=k-2}^{m-1} \binom{l}{k-2} \abs{\zeta}^{2\left( m-1-l \right)}
        \end{split}
        \label{split:ind-hypo}
    \end{align}
    
    By definition, we have for all $m, n \in \mathbb Z _{\ge 0}$ satisfying $k \le m \le n$,
    \begin{align}
    \begin{split}
        D_{\sigma, k-1} \left( \frac{z^m - \zeta ^m}{z-\zeta}, \frac{z^n - \zeta ^n}{z-\zeta} \right) &= D_{\sigma, k-1} \left( \sum_{l=0}^{m-1} \zeta^{m-1-l} z^{l}, \sum_{l=0}^{n-1} \zeta^{n-1-l} z^{l} \right) \\
        &=  \left( \overline{\zeta} \right)^{n-m} \sum_{l=k-1}^{m-1} \binom{l}{k-1} \abs{\zeta}^{2\left( m-1-l \right)}.   
    \end{split}
    \label{split:eq2}
    \end{align} 

    Now, for $m, n \in \mathbb Z _{\ge 0}$ satisfying $k \le m \le n$, set $f(z) = z^m$ and $g(z) = z^n$ and observe that, by definition,
    \begin{align*}
    \begin{split}
        ( 1-\abs{\zeta}^{2} )^{k} ( ( k-1 )! ) ^{2} D_{\zeta, k} (f, g) &= \int_{\D} f^{\left( k \right)}\left( z\right) \overline{g^{(k)} (z)} U_{\zeta, k} \left( z \right) \, \mathrm{d}A (z)
    \end{split}
    \end{align*}
    and, 
    \begin{align}
        \begin{split}
            \int_{\D} f^{\left( k \right)}\left( z\right) \overline{g^{(k)} (z)} U_{\zeta, k} \left( z \right) \, \mathrm{d}A (z)
            &= (-1)^{k+1} \left( k-1 \right)^{2} \int_{\D} f^{\left( k-1 \right)}\left( z\right) \overline{g^{(k-1)} (z)}   G _{k-1} \left( z, \zeta \right) \, dA(z) \\ &\quad - (-1)^{k+1} \left( k-1 \right) \int_{\D} f^{\left( k-1 \right)}\left( z\right) \overline{g^{(k-1)} (z)}  H_{k-1} \left( z, \zeta \right) \, dA(z).
        \end{split}
        \label{split:Green-f-and-g}
    \end{align}
    We remark that that the equality in \eqref{split:Green-f-and-g} holds because of item $(ii)$ of Proposition \ref{prop:analytic-Gk-properties}, item (4) of Proposition \ref{prop:properties-of-Green-functions} and the polarisation identity.

    Now, observe that 
    \begin{align}
        \begin{split}
            & (-1)^{k+1} \left( k-1 \right)^{2} \int_{\D} f^{\left( k-1 \right)}\left( z\right) \overline{g^{(k-1)} (z)}   G _{k-1} \left( z, \zeta \right) \, dA(z) \\
            &\qquad= - ((k-1)!)^{2} (1-\abs{\zeta}^2)^{k-1} D_{\zeta, k-1} (f,g) \\
            &\qquad= - ((k-1)!)^2 (1-\abs{\zeta}^2)^{k-1} (\overline{\zeta})^{n-m} \sum_{l=k-2}^{m-1} \binom{l}{k-2} \abs{\zeta}^{2\left( m-1-l \right)}
        \end{split}
        \label{split:eq1}
    \end{align}
    where the last equality holds because of equation \eqref{split:ind-hypo}.
Furthermore, we have from Lemma \ref{lem:H_k-space} that

    \begin{align}
        \begin{split}
             (-1)^{k+1} \left( k-1 \right) \int_{\D} f^{\left( k-1 \right)}\left( z\right) \overline{g^{(k-1)} (z)}  H_{k-1} \left( z, \zeta \right) \, dA(z) 
            \\
            = -(1-\abs{\zeta}^2)^{k} (\overline{\zeta})^{n-m} \frac{m! (k-1)!}{(m-k+1)!}
        \end{split}
        \label{split:eq3}
    \end{align}

    Since we wish to show that \eqref{eqn:local-douglas-for-monomials} holds, it suffices to show that 
    \begin{align*}
    \begin{split}
               (-1)^{k+1} \left( k-1 \right)^{2} \int_{\D} f^{\left( k-1 \right)}\left( z\right) \overline{g^{(k-1)} (z)}   G _{k-1} \left( z, \zeta \right) \, dA(z)
               \\ - (1-\abs{\zeta}^2)^{k} ((k-1)!)^{2} D_{\sigma, k-1} (f,g) \\
         = (-1)^{k+1} \left( k-1 \right) \int_{\D} f^{\left( k-1 \right)}\left( z\right) \overline{g^{(k-1)} (z)}  H_{k-1} \left( z, \zeta \right) \, dA(z).
    \end{split}
    \end{align*}

    To prove this, observe that 
    \begin{align*}
    \begin{split}
         & (-1)^{k+1} \left( k-1 \right)^{2} \int_{\D} f^{\left( k-1 \right)}\left( z\right) \overline{g^{(k-1)} (z)}   G _{k-1} \left( z, \zeta \right) \, dA(z) \\
        & \hspace{40mm} - (1-\abs{\zeta}^2)^{k} ((k-1)!)^{2} D_{\sigma, k-1} (f,g) \\
        & = - ((k-1)!)^2 (1-\abs{\zeta}^2)^{k-1} \left( \overline{\zeta} \right)^{n-m} \sum_{l=k-2}^{m-1} \binom{l}{k-2} \abs{\zeta}^{2\left( m-1-l \right)}  \\ & \hspace{15mm}  - (1-\abs{\zeta}^2)^{k} ((k-1)!)^{2} \left( \overline{\zeta} \right)^{n-m} \sum_{l=k-1}^{m-1} \binom{l}{k-1} \abs{\zeta}^{2\left( m-1-l \right)} \\
        & = -((k-1)!)^{2} (1-\abs{\zeta}^2)^{k-1} \left( \overline{\zeta} \right)^{n-m} \left( 1-\abs{\zeta}^{2} \right) \left( \sum_{l=k-1}^{m-1} \binom{l}{k-1} \abs{\zeta}^{2\left( m-1-l \right)} \right) \\
        & \hspace{30mm} + \sum_{l=k-2}^{m-1} \binom{l}{k-2} \abs{\zeta}^{2\left( m-1-l \right)} \\
        & = -((k-1)!)^{2} (1-\abs{\zeta}^2)^{k-1} \left( \overline{\zeta} \right)^{n-m} \binom{m}{k-1} \\
        &= - \left( 1-\abs{\zeta}^{2} \right)^{k-1} \left( \overline{\zeta} \right)^{n-m} \frac{(k-1)! m!}{(m-k+1)!} \\
        & = (-1)^{k+1} \left( k-1 \right) \int_{\D} f^{\left( k-1 \right)}\left( z\right) \overline{g^{(k-1)} (z)}  H_{k-1} \left( z, \zeta \right) \, dA(z).
    \end{split}
    \end{align*}
    We remark that first equality is by equations \eqref{split:eq1} and \eqref{split:eq2} and the third holds by Lemma \ref{lem:sum-lem}, and the last by \eqref{split:eq3}. This proves our claim \eqref{eqn:local-douglas-for-monomials} for monomials. Since both $D_{\zeta,k}(\cdot,\cdot)$ and $D_{\sigma,k-1}(\cdot,\cdot)$ are sesquilinear forms, it follows that \eqref{eqn:local-douglas-for-monomials} holds for all polynomials. Hence, for any polynomials $p,q$, we have
        \begin{equation*}
        D_{\sigma,k-1}\left(\frac{p-p(\zeta)}{z-\zeta}-\frac{q-q(\zeta)}{z-\zeta}\right) = D_{\zeta,k}(p-q).
        \end{equation*}
        Therefore, the map
            \begin{equation*}
            T_\zeta p:=\frac{p-p(\zeta)}{z-\zeta}
            \end{equation*}
        is an isometry from the polynomials equipped with the seminorm $D_{\zeta,k}^{1/2}$ into the polynomials equipped with the seminorm $D_{\sigma,k-1}^{1/2}$. Since polynomials are dense in $D_{\zeta,k}$ by Proposition \ref{prop:equivalence-of-norm}, the identity extends by continuity to every $f\in D_{\zeta,k}$, and we obtain
            \begin{equation*}
            D_{\zeta,k}(f)=D_{\sigma,k-1}\left(\frac{f-f(\zeta)}{z-\zeta}\right), \qquad f\in D_{\zeta,k}.
            \end{equation*}
        Finally, let $f\in \operatorname{Hol}(\mathbb D)$. If $f\in D_{\zeta,k}$, then the above identity has already been proved. Conversely, if
            \begin{equation*}
            D_{\sigma,k-1}\left(\frac{f-f(\zeta)}{z-\zeta}\right)<\infty,
            \end{equation*}
        then $\frac{f-f(\zeta)}{z-\zeta}\in D_{k-1}$, and hence
            \begin{equation*}
            f(z)=f(\zeta)+(z-\zeta)\frac{f(z)-f(\zeta)}{z-\zeta}\in D_{k-1}.
            \end{equation*}
        By Proposition 3.1, this implies that $f\in D_{\zeta,k}$. Therefore, the identity holds for every $f\in \operatorname{Hol}(\mathbb D)$. This completes the proof.
\end{proof}

A version of Theorem \ref{thm:Littlewood-Paley-Higher-Order} also holds for $\zeta \in \T$, as shown by S. Ghara et al. \cite{GGRLDF}:

\begin{theorem}[S. Ghara et al., \cite{GGRLDF}]
    Let $k \ge 1$, $\zeta \in \T$, and $f \in \operatorname{Hol} (\D)$. Then $f \in \mathscr D_{\zeta, k}$ if and only if $f = \alpha + (z-\zeta) g$ for some $g \in \operatorname{Hol} (\D)$ with $D_{\sigma, k-1} (f) < + \infty$ and $\alpha \in \C$. Moreover, in this case, the following statements hold:
    \begin{enumerate}[label=(\roman*)]
        \item $D_{\zeta, k} (f) = D_{\sigma, k-1} (g)$,
        \item $f(z) \to \alpha$ as $z \to \zeta$ in each oricyclic approach region $\abs{z-\zeta} < \kappa (1-\abs{z}^2)^{\frac{1}{2}}$, $\kappa > 0$. In particular, $f^{*} (\lambda)$ exists and is equal to $\alpha$.
    \end{enumerate}
\end{theorem}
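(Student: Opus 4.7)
\bigskip

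\noindent\textbf{Proof plan.} The plan is to prove the identity first for monomial pairs $(z^m,z^n)$ using induction on $k$, then extend it by sesquilinearity to polynomials, and finally pass to all of $\operatorname{Hol}(\D)$ by the polynomial density already established in $\mathscr D_{\zeta,k}=\mathscr D_{k-1}$ (Proposition \ref{prop:equivalence-of-norm} and its corollary). Polarizing, I would work with the sesquilinear forms $D_{\zeta,k}(\cdot,\cdot)$ and $D_{\sigma,k-1}(\cdot,\cdot)$ and aim to establish
\[
D_{\zeta,k}(z^m,z^n)=D_{\sigma,k-1}\!\left(\tfrac{z^m-\zeta^m}{z-\zeta},\tfrac{z^n-\zeta^n}{z-\zeta}\right),\qquad k\le m\le n,
\]
noting that when $\min(m,n)<k$ both sides vanish trivially. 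The base case $k=1$ is exactly the classical Littlewood--Paley formula for the Hardy space (cf.\ \cite[Ch.~VI, Lemma 3.1]{GarnettBAF}).

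For the inductive step, the key computation is to reduce $D_{\zeta,k}(f,g)$ with $f(z)=z^m$, $g(z)=z^n$ to integrals against $G_{k-1}$ and $H_{k-1}$. First I would invoke the Laplacian-Green exchange formula (Proposition \ref{prop:analytic-Gk-properties}(ii)), which is applicable because $G_k$ and $\partial G_k/\partial n$ vanish on $\T$ (Proposition \ref{Basic properties of Green's function}(c)--(d)); this lets me pass one Laplacian onto $|f^{(k-1)}|^2$ at the cost of replacing $U_{\zeta,k}$ by $\Delta_z U_{\zeta,k}$. Combining with the recursion
\[
\Delta_z G_k(z,\zeta)=(k-1)^2G_{k-1}(z,\zeta)-(k-1)H_{k-1}(z,\zeta)
\]
from Proposition \ref{prop:properties-of-Green-functions}(4), I get $D_{\zeta,k}(f,g)$ expressed as a sum of a $G_{k-1}$-integral and an $H_{k-1}$-integral (after a polarization). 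The $G_{k-1}$-piece is $D_{\zeta,k-1}$ up to a normalizing factor and is handled by the induction hypothesis together with the explicit expansion $\frac{z^m-\zeta^m}{z-\zeta}=\sum_{l=0}^{m-1}\zeta^{m-1-l}z^l$; the $H_{k-1}$-piece is evaluated in closed form by Lemma \ref{lem:H_k-space}, yielding $(1-|\zeta|^2)^{k-1}(\overline\zeta)^{n-m}\frac{(k-1)!m!}{(m-k+1)!}$.

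It remains to match the resulting expression with the RHS, which by direct expansion equals $(\overline\zeta)^{n-m}\sum_{l=k-1}^{m-1}\binom{l}{k-1}|\zeta|^{2(m-1-l)}$. This is where the main obstacle lies: an algebraic identity must close the induction. The induction hypothesis contributes $\sum_{l=k-2}^{m-1}\binom{l}{k-2}|\zeta|^{2(m-1-l)}$, and one must show that
\[
\sum_{l=k-2}^{m-1}\binom{l}{k-2}|\zeta|^{2(m-1-l)}+(1-|\zeta|^2)\sum_{l=k-1}^{m-1}\binom{l}{k-1}|\zeta|^{2(m-1-l)}=\binom{m}{k-1},
\]
which is precisely Lemma \ref{lem:sum-lem} applied with $r=|\zeta|^2$. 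I expect that tracking the numerous numerical factors $(k-1)^2$, $((k-1)!)^2$, $(1-|\zeta|^2)^k$ etc.\ coming out of the definitions is the most error-prone part; organizing everything around the target identity above, and invoking Lemma \ref{lem:sum-lem} at the very end, is the cleanest route.

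Once \eqref{eqn:local-douglas-for-monomials} is verified, the extension to polynomials is immediate from sesquilinearity, and the extension to arbitrary $f\in\operatorname{Hol}(\D)$ follows by polynomial density in $\mathscr D_{\zeta,k}$ combined with the observation that both $D_{\zeta,k}(f)<\infty$ and $D_{\sigma,k-1}\!\left(\frac{f-f(\zeta)}{z-\zeta}\right)<\infty$ characterize membership in the same space $\mathscr D_{k-1}$ (again by Proposition \ref{prop:equivalence-of-norm}), so the identity holds unconditionally, with both sides being $+\infty$ in the complementary case.
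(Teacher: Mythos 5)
Your proposal addresses the wrong theorem. What you have written is, essentially verbatim, the paper's proof of Theorem~\ref{thm:Littlewood-Paley-Higher-Order}, which is the Local Douglas formula for an \emph{interior} point $\zeta\in\D$. The statement you were asked to prove is Theorem~\ref{thm:local-douglas-formula-for-circle}, the boundary case $\zeta\in\T$, which the paper does not prove at all but simply cites from \cite{GGRLDF}.

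The two cases are not minor variations of each other, and nearly every step of your plan breaks down at the boundary. First, for $\zeta\in\T$ the local integral $D_{\zeta,k}$ is defined via the Poisson kernel $P_{\delta_\zeta}$, not via the Green--Almansi function $G_k(z,\zeta)$; the latter degenerates as $\zeta\to\T$ (its kernel factor $(1-|\zeta|^2)^{-k}$ blows up), so the Laplacian--Green exchange (Proposition~\ref{prop:analytic-Gk-properties}(ii)), the recursion $\Delta_z G_k=(k-1)^2G_{k-1}-(k-1)H_{k-1}$, and Lemma~\ref{lem:H_k-space} are all unavailable. Second, the density argument you invoke relies on Proposition~\ref{prop:equivalence-of-norm}, which asserts $\mathscr D_{\zeta,k}=\mathscr D_{k-1}$ \emph{only for $\zeta\in\D$}; for $\zeta\in\T$, $\mathscr D_{\zeta,k}$ is a proper subspace of $\mathscr D_{k-1}$ (indeed, even of $\mathscr D_{\sigma,k-1}$) whose membership is exactly what the theorem characterizes. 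You cannot pass to general $f$ by ``both sides are $+\infty$ on the complement of $\mathscr D_{k-1}$'': there are functions in $\mathscr D_{\sigma,k-1}$ with $D_{\zeta,k}(f)=\infty$, so the equivalence is genuinely two-sided and nontrivial. Third, the theorem is not a plain formula but an ``if and only if'' characterization together with the existence of a boundary value $\alpha$ and the oricyclic convergence $f(z)\to\alpha$ in part (ii); your proposal makes no contact with any of this, and there is no interior analogue from which it could follow. A correct blind proof would have to follow the boundary route of Richter--Sundberg and \cite{GGRLDF}: establish that $f\in\mathscr D_{\zeta,k}$ forces a factorization $f=\alpha+(z-\zeta)g$, prove the oricyclic (and hence radial) limit at $\zeta$, and then derive the identity $D_{\zeta,k}(f)=D_{\sigma,k-1}(g)$ by a limiting/Fourier-coefficient argument adapted to the Poisson-kernel weight, none of which appears in your proposal.
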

As a consequence, we have that for $f \in H^2$, $\zeta \in \overline{\D}$, and $k \ge 1$,  
\begin{equation}  
    D_{\zeta, k} (f) = D_{\sigma, k-1} \left( \frac{f(z) - f^{*} (\zeta)}{z-\zeta} \right)
    \label{eqn:local-douglas-formula}
\end{equation}  
where $f^{*} (\zeta) = f(\zeta)$ if $\zeta \in \D$; else, if $\zeta \in \T$, $f^{*} (\zeta)$ is the radial limit provided that it exists.  

\section{Higher Order weighted Dirichlet Spaces \texorpdfstring{$\mathscr D_{\mu,k}$}{D(mu,k)} with \texorpdfstring{$\mu$}{mu} supported on \texorpdfstring{$\overline{\mathbb D}$}{bar D}}

Let $k \in \mathbb Z _{\ge 1}$, and let $\mu\in M_+(\overline{\D}).$ We write $\mu = \mu_1+\mu_2$ where $\mu_1= \mu|_{\D}$ and $\mu_2= \mu|_{\mathbb T}.$ So for each Borel measurable subset $A \subset \C$, we have 
\begin{equation*}
    \mu (A) = \mu_1 (A \cap \D) + \mu_2 (A \cap \T).
\end{equation*} 
We consider the function $U_{\mu, k} : \mathbb D \to \mathbb C$ given by 
\begin{align*}
    U_{\mu,k}(z) &:= \frac{1}{((k-1)!)^{2}} 
    \int_{\D} \frac{(-1)^{k+1} G_k(z,\zeta)}{(1-|\zeta|^{2})^{k}}\, d\mu_1(\zeta) 
    + \frac{1}{k!(k-1)!} \int_{\T} \frac{(1-|z|^{2})^{k}}{|z-\zeta|^{2}}\, d\mu_2(\zeta),\quad z\in \D,
\end{align*}

Observe that if $\zeta \in \D$, and $\mu=\delta_{\zeta},$  then  $U_{\delta_{\zeta}, k} =U_{\zeta, k}$ as given by \eqref{eqn:local-weight-in-open-disc}. Since $U_{\zeta, k}$ is strictly positive for every $\zeta\in \overline{\D},$ it follows that $U_{\mu, k}(z)\geqslant 0$ for every $z\in \D.$ Note that $U_{\mu,k}= U_{\mu_{1},k} +U_{\mu_2,k},$ where $U_{\mu_2,k}(z)=\frac{1}{k!(k-1)!} (1-|z|^2)^{k-1} P_{\mu_2}(z)$ for $z\in \D.$  Since $P_{\mu_2},$ the Poisson integral of the measure $\mu_2,$ is a harmonic function on $\mathbb D$, the function $U_{\mu_2,k}$ is a $k$-harmonic function on $\D$, that is, $\Delta ^{k} U_{\mu_2, k}= 0.$ It also follows from item $(f)$ of Proposition \ref{Basic properties of Green's function} that $ \left ( -\Delta \right )^{k} U_{\mu_{1}, k}(z) = \frac{d\mu_1(z)}{(1-|z|^2)^k} \geqslant 0$ (in the sense of distributions), hence, $U_{\mu,k}$ is a $k$-superharmonic function. 

\begin{definition}[Higher order weighted Dirichlet type integrals]
Let $k \in \mathbb Z_{\ge 1}$, and let $\mu\in M_+(\overline{\D}).$ The \textit{$k$th order weighted Dirichlet integral} and the associated \textit{$k$th order weighted Dirichlet type space} are defined respectively by
\begin{align*}
\begin{split}
D_{\mu, k} (f) &:= \int_{\D} \lvert f^{(k)} (z) \rvert^2 U_{\mu, k} (z) \, dA(z), \qquad f \in \operatorname{Hol} (\D), \\
\mathscr D_{\mu, k} &:= \left \{ f \in \operatorname{Hol} (\D) \, : \, D_{\mu, k} (f) < \infty \right \}.
\end{split}
\end{align*}
\end{definition}
Let $\mu\in M_+(\overline{\D}).$ For $f\in \operatorname{Hol} (\D)$, we define the $0$-th order weighted Dirichlet integral $D_{\mu,0}(f)$  
\begin{align*}
    D_{\mu,0}(f) := \int_{\mathbb D} |f(z)|^2 \, d\mu_1(z) + \lim\limits_{r \to 1^{-}}
        \int_{\mathbb T} |f(r\zeta)|^2 P_{\mu_2}(r\zeta) \,d\sigma(\zeta),
\end{align*}
whenever the latter limit exists. For the normalized  Lebesgue measure $\sigma$ on the circle $\T,$ it is straightforward to see that $D_{\sigma,0}(f)= \|f\|_{H^2}^2$ and we define the space $\mathscr D_{\sigma,0}= H^2,$ the Hardy space.  We remark that if $\mu$ is a measure supported on $\overline{\D}$, $f\in \operatorname{Hol} (\D)$ and $k \ge 1$, then, 
\begin{equation} \label{eqn:local-dirichlet-integral}
    D_{\mu, k} (f) = \int_{\overline{\D}} D_{\zeta, k} (f) \, d\mu (\zeta).
\end{equation}

For $0<r<1,$ the $r$-dilation $f_r$ of a function $f\in  \operatorname{Hol} (\D),$ is given by $f_r(z)=f(rz)$ for $z\in \D.$  We show that $f_r$ converges to the function $f$ as $r\to 1^{-}. $ in $\mathscr D_{\mu, k}.$ In this regard, the following proposition extends \cite[Lemma 4.1]{1993multiplication}, \cite[Corollary 1.5]{MR2019HM}.
\begin{proposition} \label{prop:dilations}
    Let $k\in \mathbb Z_{\geqslant 1},$ $\mu\in M_+(\overline{\D}),$  and $f\in \mathscr D_{\mu, k}.$ For $r \in (0,1),$ we have that 
    \begin{equation*}
        D_{\mu, k} (f_r) \le 2 D_{\mu, k} (f),
    \end{equation*}
    and consequently,  $D_{\mu, k} (f_r - f) \to 0 $  as $r \to 1^{-}.$
\end{proposition}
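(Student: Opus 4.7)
The plan is to reduce the bound $D_{\mu,k}(f_r)\leq 2D_{\mu,k}(f)$ to a pointwise estimate in $\zeta$ via the identity $D_{\mu,k}(f_r) = \int_{\overline{\D}} D_{\zeta,k}(f_r)\,d\mu(\zeta)$ recorded in \eqref{eqn:local-dirichlet-integral}, and then to deduce the convergence statement from the uniform bound together with a polynomial approximation argument.

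For $\zeta \in \D$ and $k \geq 2$, Corollary \ref{cor:f_r} immediately supplies $D_{\zeta,k}(f_r) \leq \frac{k}{k-1} D_{\zeta,k}(f) \leq 2 D_{\zeta,k}(f)$. For $\zeta \in \T$ (and for the leftover case $\zeta \in \D$ with $k=1$), I would instead invoke the local Douglas formulas of Theorems \ref{thm:Littlewood-Paley-Higher-Order} and \ref{thm:local-douglas-formula-for-circle}, both of which recast
\begin{equation*}
    D_{\zeta,k}(f_r) \;=\; D_{\sigma,k-1}\!\left(\frac{f(rz)-f(r\zeta)}{z-\zeta}\right).
\end{equation*}
Writing $\frac{f(rz)-f(r\zeta)}{z-\zeta} = r\,(F_{r\zeta})_r(z)$, where $F_{\lambda}(w):=\frac{f(w)-f(\lambda)}{w-\lambda}$, and using that the dilation $h \mapsto h_r$ contracts $D_{\sigma,k-1}$ (an immediate monomial-basis computation, since $|a_n r^n|^2 \leq |a_n|^2$), this yields $D_{\zeta,k}(f_r) \leq r^2 D_{r\zeta,k}(f)$. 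After integrating against $\mu$, the remaining task is to control $\int_{\overline{\D}} D_{r\zeta,k}(f)\,d\mu(\zeta)$ by $D_{\mu,k}(f)$ with constant $2/r^2$: for $\mu_1 := \mu|_{\D}$ this follows from a direct pointwise comparison of the $U$-weights, while for $\mu_2 := \mu|_{\T}$, one uses Fubini together with the estimate \eqref{eqn:Green-estimate} and a pointwise comparison of the kernels $|1-r\bar\zeta z|^{-2}$ and $|1-\bar\zeta z|^{-2}$ for $\zeta \in \T$.

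For the convergence statement, given the uniform bound and density of polynomials in $\mathscr D_{\mu,k}$, a standard $\varepsilon$-argument applies: for $\varepsilon > 0$ choose a polynomial $p$ with $D_{\mu,k}(f-p) < \varepsilon$, and estimate
\begin{equation*}
    D_{\mu,k}(f_r - f)^{1/2} \;\leq\; D_{\mu,k}((f-p)_r)^{1/2} + D_{\mu,k}(p_r - p)^{1/2} + D_{\mu,k}(p-f)^{1/2}.
\end{equation*}
The first and third terms on the right are each at most $(1+\sqrt{2})\,\varepsilon^{1/2}$ by the uniform bound, while $D_{\mu,k}(p_r - p) \to 0$ as $r \to 1^-$ because $p$ is a polynomial (the convergence reduces to $r^n \to 1$ for each of its finitely many monomials). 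I expect the main obstacle to be securing the explicit constant~$2$ in the boundary case: naive estimates on the ratio $|1-r\bar\zeta z|^{-2}/|1-\bar\zeta z|^{-2}$ for $\zeta \in \T$ combined with the $\frac{k}{k-1}$ loss in \eqref{eqn:Green-estimate} give only a $k$-dependent constant, so a sharper kernel comparison, carefully balanced against the $r^2 \leq 1$ gain from dilation, is what is needed to reach the stated bound.
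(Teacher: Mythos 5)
Your argument splits into three cases, and they have rather different standing relative to the paper's proof. For $\zeta\in\D$ and $k\geq 2$ you invoke Corollary \ref{cor:f_r} exactly as the paper does, so that part matches. For $\zeta\in\T$ and $k\geq 2$ you propose a genuinely different route: instead of the paper's outright citation of \cite[Theorem~2.6]{Primerbook} for the bound $D_{\mu_2,k}(f_r)\le D_{\mu_2,k}(f)$, you pass through the local Douglas formula to obtain $D_{\zeta,k}(f_r)\le r^2 D_{r\zeta,k}(f)$ and then try to trade $D_{r\zeta,k}(f)$ back for $D_{\zeta,k}(f)$. This route can be closed, but you have not closed it. The missing ingredient is the elementary kernel inequality
\begin{equation*}
r\,\abs{1-\overline{\zeta}z}\le \abs{1-r\overline{\zeta}z},\qquad \abs{\overline{\zeta}z}<1,\ 0<r<1,
\end{equation*}
which follows from $\abs{1-ru}^2 - r^2\abs{1-u}^2 = (1-r)\bigl[(1+r)-2r\Re u\bigr] > 0$. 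Feeding this into the upper bound of \eqref{eqn:Green-estimate} and comparing with the exact formula for $D_{\zeta,k}$ at $\zeta\in\T$ gives $D_{r\zeta,k}(f)\le \tfrac{k}{(k-1)r^2}D_{\zeta,k}(f)$, so that the $r^2$ gain from dilation exactly cancels and one lands on the constant $\tfrac{k}{k-1}\le 2$. You correctly sensed that this is where the balancing act lives, but as written the key step is absent.

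The more serious gap is the case $k=1$. Corollary \ref{cor:f_r} is stated and proved only for $k\geq 2$, and the Green-function estimate \eqref{eqn:Green-estimate} on which the above kernel comparison rests is likewise a $k\geq 2$ statement, so neither tool is available to you there. Your suggestion that the $k=1$, $\zeta\in\D$ case should "follow from a direct pointwise comparison of the $U$-weights" is not substantiated: the logarithmic Green function $U_{\zeta,1}$ does not admit a clean pointwise comparison between nearby centers $\zeta$ and $r\zeta$, and the corresponding comparison of $D_{r\zeta,1}(f)$ against $D_{\zeta,1}(f)$ for $\zeta\in\T$ is a known but nontrivial fact. The paper handles $k=1$ entirely by citing \cite[Corollary~1.5]{MR2019HM}, and you would need to do the same or supply a separate argument. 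The concluding density-plus-triangle-inequality argument for $D_{\mu,k}(f_r-f)\to 0$ is fine and is the same as the paper's.
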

    
\begin{proof}
    We only deal with $k\ge 2$ as the case $k=1$ can be found in \cite[Corollary 1.5]{MR2019HM}. Write $\mu=\mu_1+\mu_2$ where $\mu_1 = \mu |_{\D}$ and $\mu_2 = \mu |_{\T}$. From Corollary \ref{cor:f_r}, we have that for any $\zeta \in \D$, $D_{\zeta, k} (f_r) \le 2 D_{\zeta, k} (f)$. Integrating both sides with respect to the measure $\mu_1$, we have  
    \begin{align*}
        D_{\mu_{1}, k} (f_r) \le 2 D_{\mu_{1}, k} (f).
    \end{align*}
    Combining this together with \cite[Theorem 1.1]{GGR22}, we have $D_{\mu, k} (f_r) \le 2 D_{\mu, k} (f).$

For the second part, the result is already known in the case $\mu=\mu_2$; see \cite[Theorem 1.1]{GGR22}. It therefore suffices to treat the case $\mu=\mu_1.$ We first note that $D_{\sigma,k-1}(f_r - f) \to 0$ as $r \to 1^{-}$ for any $f \in \mathscr{D}_{k-1}$. By Proposition~\ref{eqn:equivalent-norms-on-Dk}, it follows that $
D_{\zeta,k}(f_r - f) \to 0 \quad \text{as } r \to 1^{-} $
for every $f \in \mathscr{D}_{\zeta,k}$ and $\zeta \in \mathbb{D}.$ On the other hand, by the first part, for $\zeta \in \mathbb{D}$ we have
\begin{align*}
 D_{\zeta,k}(f_r - f)
\leqslant 2 D_{\zeta,k}(f_r) + 2 D_{\zeta,k}(f)
\leqslant 6 D_{\zeta,k}(f), \qquad f \in \mathscr{D}_{\zeta,k}.   
\end{align*}
In view of \eqref{eqn:local-dirichlet-integral}, the family $\{D_{\zeta,k}(f_r - f) : 0 < r < 1\}$ is dominated by an integrable function. Hence, the dominated convergence theorem yields
\begin{align*}
    D_{\mu_1,k}(f_r - f) \to 0 \quad \text{as } r \to 1^{-}.
\end{align*}
This completes the proof.  
\end{proof}
 We remark that the bound of $2$ in Proposition \ref{prop:dilations} is in no way sharp. In fact, a proof of induction using Littlewood-Paley formula and Local Douglas formula \eqref{eqn:local-douglas-formula} shows that the bound can be improved to $1$. We do not present the details here and refer the readers \cite[Theorem 2.6]{GGRLDF} for the proof. 

We note down the following corollary as this will be used to derive certain properties of the space $\mathscr D_{\mu, k}.$
 
\begin{corollary}
    Let $k\ge 1$ and $\mu\in M_+(\overline{\D}).$  For each $f\in \mathscr D_{\mu,k}$ and $\varepsilon >0 $, there exists an analytic polynomial $p\in\mathbb C[z]$ such that $D_{\mu, k} (f - p) < \varepsilon$.
    \label{cor:density-of-poly}
\end{corollary}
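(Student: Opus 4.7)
My plan is to mimic the standard two-step density argument and follow the strategy indicated by \cite[Corollary 7.3.4]{Primerbook} for the $k=1$ case. The first step is handed to us directly by Proposition \ref{prop:dilations}: given $\varepsilon > 0$ and $f \in \mathscr D_{\mu,k}$, I pick $r \in (0,1)$ sufficiently close to $1$ so that $D_{\mu,k}(f - f_r) < \varepsilon/4$.

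The second step is to approximate the dilation $f_r$ by a polynomial in the $D_{\mu,k}$-seminorm. Since $f_r(z) = f(rz)$ extends holomorphically to the disc $\{|z| < 1/r\}$, which strictly contains $\overline{\D}$, its Taylor partial sums $s_n$ at the origin converge to $f_r$ uniformly on $\overline{\D}$; by Cauchy's integral formula applied to derivatives (equivalently, termwise differentiation inside the radius of convergence), the $k$-th derivatives $s_n^{(k)}$ also converge to $f_r^{(k)}$ uniformly on $\overline{\D}$. To promote this uniform convergence of derivatives into convergence in $D_{\mu,k}$, I would establish integrability of the weight, $\int_{\D} U_{\mu,k}(z)\, dA(z) < \infty$. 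Taking the monomial $p(z) = z^k/k!$ gives $p^{(k)} \equiv 1$, so this integrability is equivalent to the statement $D_{\mu,k}(p) < \infty$. Using the fibering formula \eqref{eqn:local-dirichlet-integral} together with the local Douglas formulas (Theorems \ref{thm:Littlewood-Paley-Higher-Order} and \ref{thm:local-douglas-formula-for-circle}), one has
\begin{equation*}
    D_{\zeta,k}(p) \;=\; D_{\sigma,k-1}\!\left(\tfrac{1}{k!}\sum_{j=0}^{k-1} \zeta^{k-1-j}\, z^{j}\right),
\end{equation*}
which, being a polynomial expression in $|\zeta|^2$, is bounded uniformly for $\zeta \in \overline{\D}$; hence $D_{\mu,k}(p) \leqslant \mu(\overline{\D})\sup_{\zeta \in \overline{\D}} D_{\zeta,k}(p) < \infty$.

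With the integrability of $U_{\mu,k}$ in hand, the dominated convergence theorem (with majorant $(\sup_n \|s_n^{(k)} - f_r^{(k)}\|_{\infty,\overline{\D}})^{2}\, U_{\mu,k}$) yields $D_{\mu,k}(s_n - f_r) \to 0$. Combining with the first step via the triangle inequality for the seminorm $\sqrt{D_{\mu,k}(\cdot)}$, the polynomial $p := s_n$ satisfies $D_{\mu,k}(f - p) < \varepsilon$ for $n$ sufficiently large. The only nonroutine part of this plan is the integrability check for the weight $U_{\mu,k}$; the rest is a standard limiting argument.
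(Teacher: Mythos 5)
Your proposal is correct and follows essentially the same route the paper intends (dilation plus Taylor truncation, as in the Primer reference cited after Proposition \ref{prop:dilations}). The only substantive step the paper leaves implicit is the integrability $\int_{\D} U_{\mu,k}\,dA < \infty$, which you verify cleanly by noting $D_{\zeta,k}(z^k/k!) = D_{\sigma,k-1}\bigl(\tfrac{1}{k!}\sum_{j=0}^{k-1}\zeta^{k-1-j}z^j\bigr)$ is bounded uniformly in $\zeta\in\overline{\D}$ via the two local Douglas formulas; the remaining estimate then needs only the sup-norm bound $D_{\mu,k}(s_n - f_r) \le \norm{s_n^{(k)} - f_r^{(k)}}_{\infty,\overline{\D}}^2 \int_{\D} U_{\mu,k}\,dA$ rather than dominated convergence.
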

\begin{proof}
Let $f\in \mathscr D_{\mu,k}.$ By Proposition \ref{prop:dilations}, there exists $t\in (0,1)$ such that  $D_{\mu, k} (f - f_t) < \frac{\varepsilon}{2}.$ Fix this $t,$ and consider $s_n(f_t),$ the $n$-th degree Taylor polynomial of $f_t$ around zero. Since $f_t$ is holomorphic in a neighborhood of $\overline{\D},$ for each $j\in \mathbb Z_{\geqslant 0},$ the polynomials $s_n(f_t^{(j)})$ converges uniformly to $f_t^{(j)}$ in a neighborhood of $\overline{\D}$ as $n\to \infty.$ It follows that $D_{\mu, k} (f_t - s_n(f_t)) < \frac{\varepsilon}{2}$ for large $n.$ Hence we obtain that $D_{\mu, k} (f - s_n(f_t)) < \varepsilon$ for large $n.$
\end{proof}

The following relates the weighted Dirichlet integral $D_{\mu,1}(f)$ to the Dirichlet integral $D_{\sigma,2}(f)$. This is a natural generalization of \cite[Proposition 5.3]{Rydhe19}.

\begin{proposition} \label{Carleson measure}
    Let  $\mu\in M_+(\overline{\D}).$ Then, the measure $\nu$ given by $d\nu (z) = U_{\mu,1} (z)dA(z)$ determines a Carleson measure for $H^2,$ that is, there exist a scalar $C_{\mu}>0$ such that
    \begin{align*}
        \int_{\D}|g(z)|^2 U_{\mu,1} (z)dA(z) \leqslant C_{\mu} \|g\|_{H^2}^2,\qquad g\in\operatorname{Hol}(\D).
    \end{align*}
    Consequently, we have $D_{\mu,1}(f) \leqslant 4C_{\mu} \left(|f'(0)|^2 + D_{\sigma,2}(f)\right),$ for every $f\in\operatorname{Hol}(\D).$
\end{proposition}

\begin{proof}
    In view of \cite[Chapter VI, Lemma 3.3]{MR2261424}, we need to show that 
\begin{equation*}
    \sup_{w\in \D} \int_{\D} \frac{1-\abs{w}^2}{\abs{1-\overline{w}z}^2}\, d\nu (z) = \sup_{w\in \D} \int_{\D} \frac{1-\abs{w}^2}{\abs{1-\overline{w}z}^2}\, U_{\mu,1} (z) dA (z) < +\infty.
\end{equation*}

Fix $w\in \D$. Consider the function $F_{w} \in\operatorname{Hol}(\D),$ given by 
\begin{equation*}
    F_w (z) = \begin{cases}
        \frac{1}{\overline{w}} \log \frac{1}{1-\overline{w}z}, & w\ne 0, \\
        z, & w=0.
    \end{cases}
\end{equation*}

It is easy to check that $F_w ' (z) = \frac{1}{1-\overline{w}z}$ for each $z \in \D$. Consequently, 
\begin{equation*}
    D_{\mu, 1} (F_w) = \int_{\D} \frac{1}{\abs{1-\overline{w}z}^2} U_{\mu,1} (z) \, dA(z).
\end{equation*}

We show that for each $\zeta\in \overline{\D}, w \in \D$, we have
\begin{equation}
    D_{\zeta,1} (F_w) \le \frac{4}{1-\abs{w}^2}.
    \label{eq:D-zeta-F-w}
\end{equation}

We will be done, for then, 
\begin{align*}
    \sup_{w\in \D} \int_{\D} \frac{1-\abs{w}^2}{\abs{1-\overline{w}z}^2}\, U_{\mu} (z) dA (z) &= \sup_{w \in \D} (1-\abs{w}^2) D_{\mu, 1} (F_w) \\
    &= \sup_{w\in \D} (1-\abs{w}^2) \int_{\overline{\D}} D_{\zeta, 1} (F_w) \, d\mu (\zeta) \\
    &\le 4\mu(\overline{\D}) < \infty.
\end{align*}

We proceed to show inequality \eqref{eq:D-zeta-F-w}. To this end, first note that, if we take $w=0,$ then $D_{\zeta,1}(F_w)= 1$ for every $\zeta\in \overline{\D},$ and hence \eqref{eq:D-zeta-F-w} is established for $w=0.$  Now take $\zeta\in \overline{\D}, w \in \D \setminus \{0\}$ and notice that
\begin{equation*}
    F_w (z) = \sum_{m=1}^{\infty} \frac{\overline{w}^{m-1}}{m} z^m, \qquad z\in \D,
\end{equation*}
and for $z\in \D$,
\begin{align*}
    \frac{F_w (z)-F_w (\zeta)}{z-\zeta} &= \sum_{m=1}^{\infty} \frac{\overline{w}^{m-1}}{m}\, \frac{z^m - \zeta^m}{z-\zeta} \\
    &= \sum_{m=1}^{\infty} \frac{\overline{w}^{m-1}}{m}\, \left( \sum_{j=0}^{m-1} z^{j} \zeta^{m-1-j}\right) \\
    &= \sum_{j=0}^{\infty} \left( \sum_{m=j+1}^{\infty} \frac{\overline{w}^{m-1}}{m} \zeta ^{m-1-j} \right) z^j.
\end{align*}

Consequently, we have by the local Douglas formula in \eqref{eqn:local-douglas-formula} that,
\begin{equation*}
    D_{\zeta,1}(F_w) =\sum_{j=0}^{\infty} \abs{\sum_{j=0}^{\infty} \frac{\overline{w}^{m-1}}{m} \zeta^{m-1-j}}^2 \le \sum_{j=0}^{\infty} \left( \sum_{m=j+1}^{\infty} \frac{\abs{w}^{m-1}}{m} \right)^2
\end{equation*}
Set $\varrho = |w|$. We consider the sum
\begin{equation*}
    S(\varrho) := \sum_{j=0}^{\infty} \left( \sum_{m=j+1}^{\infty} \frac{\varrho^{m-1}}{m} \right)^2.
\end{equation*}
Using the equality
\begin{equation*}
    \sum_{m=j+1}^{\infty} \frac{\varrho^{m-1}}{m} = \frac{1}{\varrho} \int_{0}^{\varrho} \frac{x^j}{1-x} \, dx,
\end{equation*}
we have
\begin{align*}
    S(\varrho) &= \frac{1}{\varrho^2} \sum_{j=0}^{\infty} \int_{0}^{\varrho} \int_{0}^{\varrho} \frac{(xy)^j}{(1-x)(1-y)} \, dx\, dy\\
    &= \frac{1}{\varrho^2} \int_{0}^{\varrho} \int_{0}^{\varrho} \frac{1}{(1-x)(1-y)(1-xy)} \, dx\, dy. \\
    &= \frac{(1+\varrho) \ln(1+\varrho)}{\varrho^2(1-\varrho)} + \frac{\ln(1-\varrho)}{ \varrho ^2}\\
    &= \frac{2 \ln (1+\varrho) }{\varrho(1-\varrho)} +  \frac{\ln(1-\varrho^2)}{ \varrho ^2}.
\end{align*}
Using the inequalities $\ln (1+x) \le x$ for $x \ge 0$ and $\ln (1- x^2) \le 0$ for $x \in (0,1)$, we have that 
\begin{equation*}
    S (\varrho) \le \frac{2}{1-\varrho} = \frac{2 (1+\varrho)}{1-\varrho^2} \le \frac{4}{1-\varrho^2}.
\end{equation*}
Consequently, we have
\begin{equation*}
    D_{\zeta, 1} (F_w) \le \frac{4}{1-\abs{w}^2}.
\end{equation*}
This completes the proof of the first part of the Proposition. Consequently we have that
\begin{align*}
    D_{\mu,1}(f) = \int_{\D}|f'(z)|^2 U_{\mu,1} (z)dA(z) \leqslant C_{\mu} \|f'\|_{H^2}^2 \leqslant 4C_{\mu} \left ( |f'(0)|^2 + D_{\sigma,2}(f) \right),
\end{align*}
 for every $f\in\operatorname{Hol}(\D).$
\end{proof}

Now we establish a difference formula which will be essential in studying the properties of the operator $M_z$ of multiplication by the co-ordinate function on $ \mathscr D_{\mu, k}.$ The following formula was already established in \cite[Proposition 2.7]{GGR22} for the special case when $\mu\in  M_+(\T),$ see also \cite[Proposition 3.4]{Rydhe19}.
\begin{proposition}\label{prop:diff-formula}
    Let $\mu\in M_+(\overline{\D}),$ $k\in \mathbb Z_{\geqslant 1}$ and  $f\in \mathscr D_{\mu, k}.$ Then following holds:
    \begin{equation*}
        D_{\mu, k} (zf) = D_{\mu, k} (f) + D_{\mu, k-1} (f).
    \end{equation*}
\end{proposition}

\begin{proof}
      First let's consider the case of $k=1.$ In view of \cite[Theorem 4.1]{Richter91} and  \cite[Ch. IV, Proposition 1.6]{1993multiplication}, it follows that 
\begin{align*}
   D_{\mu,1}(zf)= D_{\mu,1}(f) +  D_{\mu,0}(f),\quad f\in\mathscr{D}_{\mu, 1},
\end{align*}
see also \cite[Ch. IV, Theorem 1.10]{1993multiplication}. 

Now, we consider the case $k \ge 2$. Write $\mu=\mu_1+\mu_2$ where $\mu_1 = \mu |_{\D}$ and $\mu_2 = \mu |_{\T}$. The result is already established in \cite[Proposition 2.7]{GGR22} when $\mu=\mu_2.$ Therefore, in view of the equation \eqref{eqn:local-dirichlet-integral}, it is sufficient to prove the result for the case of $\mu=\delta_{\zeta}$ for $\zeta\in\D.$  Consider $\zeta \in \D$ and  $D_{\zeta,k}(f) < \infty.$ Then we can write
\begin{equation*}
    \frac{zf - (zf) (\zeta)}{z-\zeta} = z \frac{f- f (\zeta)}{z-\zeta} + f(\zeta).
\end{equation*}
Using equation \eqref{eqn:local-douglas-formula}, it follows that
\begin{align*}
    \begin{split}
        D_{\zeta,k}(zf) &=  D_{\sigma, k-1} \left( z \frac{f-f\left( \zeta \right)}{z-\zeta} \right) \\
        &= D_{\sigma, k-1} \left( \frac{f-f\left( \zeta \right)}{z-\zeta} \right) + D_{\sigma, k-2} \left( \frac{f-f\left( \zeta \right)}{z-\zeta} \right) \\
        &= D_{\zeta,k}(f) + D_{\zeta,k-1}(f).
    \end{split}
    \end{align*}
    Note that the second equality follows from the difference formula in   \cite[Proposition 2.7]{GGR22}. This completes the proof.
\end{proof}
\begin{remark}\label{zf vs f}
Let $\mu\in M_+(\overline{\D}),$ $k\in \mathbb Z_{\geqslant 1}.$ The above proof also shows that if $D_{\mu,k}(zf) < \infty$ for some $f\in H^2,$ then $D_{\mu,k}(f) < \infty$ and $D_{\mu,k-1}(f) < \infty.$ Moreover, in this case we have $D_{\mu,k}(zf) \geqslant D_{\mu,k}(f).$ 
\end{remark}

Let $L: \operatorname{Hol} (\D) \to \operatorname{Hol} (\D)$ be the backward shift operator given by
\begin{equation*}
    (Lf)(z) := \frac{f(z)-f(0)}{z}, \qquad z \in \D .
\end{equation*}
In the following lemma we show that $L$ maps $\mathscr D_{\mu, k}$ into itself, that is, $\mathscr D_{\mu,k}$ is invariant under backward shift.
\begin{lemma}\label{prop:closed-under-backward-shift}
	Let $k\in \mathbb Z_{\geqslant 1}$, $\mu\in M_+(\overline{\D}),$ and $f \in\mathscr D_{\mu,k}.$  Then
\begin{itemize}
    \item[\rm (i)] $D_{\mu, k} \left( Lf \right) \le D_{\mu, k} \left( f \right).$
    \item[\rm (ii)] $D_{\mu, k} \left( L^{j}f \right) \to 0$ as $j\to \infty$
\end{itemize}
    \end{lemma}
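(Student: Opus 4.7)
For part (i) my plan is to derive the contraction property directly from the difference formula of Proposition \ref{prop:diff-formula}, exploiting the identity $z(Lf) = f - f(0)$. Since $D_{\mu,k}$ depends only on the $k$-th derivative of its argument (and $k \ge 1$), the additive constant $f(0)$ is invisible to it, so $D_{\mu,k}(z Lf) = D_{\mu,k}(f - f(0)) = D_{\mu,k}(f)$. Before invoking Proposition \ref{prop:diff-formula} I need to know $Lf \in \mathscr D_{\mu,k}$; this is exactly the content of Remark \ref{zf vs f}, since $z(Lf) = f - f(0) \in \mathscr D_{\mu,k}$. Then Proposition \ref{prop:diff-formula} yields
\[
D_{\mu,k}(f) \;=\; D_{\mu,k}(zLf) \;=\; D_{\mu,k}(Lf) + D_{\mu,k-1}(Lf) \;\ge\; D_{\mu,k}(Lf),
\]
which is (i). As a byproduct one also obtains the telescoping identity $D_{\mu,k}(L^{j}f) - D_{\mu,k}(L^{j+1}f) = D_{\mu,k-1}(L^{j+1}f)$.

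For part (ii) the plan is to couple the contraction (i) with the polynomial density result Corollary \ref{cor:density-of-poly}. Given $\varepsilon > 0$, choose a polynomial $p \in \mathbb C[z]$ with $D_{\mu,k}(f - p) < \varepsilon$. For any $j > \deg p$ we have $L^{j}p = 0$, so $L^{j}f = L^{j}(f - p)$. Iterating the contraction from (i) a total of $j$ times then gives
\[
D_{\mu,k}(L^{j}f) \;=\; D_{\mu,k}(L^{j}(f-p)) \;\le\; D_{\mu,k}(f-p) \;<\; \varepsilon,
\]
which proves (ii).

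The argument is essentially mechanical given the tools already assembled, so I expect no serious obstacle. The only subtle point is the tacit verification that $Lf \in \mathscr D_{\mu,k}$ so that the difference formula actually applies; this is where Remark \ref{zf vs f} is needed. One alternative route for (ii) would be to try to extract decay from the telescoping identity $\sum_{j \ge 1} D_{\mu,k-1}(L^{j}f) \le D_{\mu,k}(f)$, but this only yields $D_{\mu,k-1}(L^{j}f) \to 0$, not the desired $D_{\mu,k}$-decay; the polynomial approximation route bypasses this difficulty cleanly and is the reason I favor it.
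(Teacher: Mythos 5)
Your proof is correct and follows essentially the same route as the paper: part (i) rests on $z(Lf)=f-f(0)$ together with the monotonicity $D_{\mu,k}(g)\le D_{\mu,k}(zg)$ (Remark \ref{zf vs f} already gives this inequality for all $g\in\operatorname{Hol}(\D)$, so the detour through Proposition \ref{prop:diff-formula} and the membership check $Lf\in\mathscr D_{\mu,k}$, while valid, is not needed), and part (ii) combines (i) with polynomial density exactly as in the paper. Your observation that $L^{j}f=L^{j}(f-p)$ for $j>\deg p$ is in fact a slightly cleaner way to phrase the paper's estimate, since it avoids writing an additive bound for the quadratic form $D_{\mu,k}$.
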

    \begin{proof}
	 (i) Write $g = Lf$. Then $zg(z) = f(z)-f\left( 0 \right)$. Since $D_{\mu, k} \left( f \right)= D_{\mu, k} \left( zg \right),$ by Remark \ref{zf vs f}, it follows that  $D_{\mu, k} \left( f \right) \ge D_{\mu, k} \left( g \right) =D_{\mu,k } \left( Lf \right).$\\
     
     (ii) Let $\epsilon >0.$ By Corollary \ref{cor:density-of-poly}, there exists an analytic polynomial $p \in \C \left[ z \right]$ such that $D_{\mu,k} \left( f-p \right) < \varepsilon$. Using the part (i) result, we obtain that for $j \ge $ degree $(p)+1$,
	\begin{equation*}
	    D_{\mu, k} \left( L^{j}f \right) = D_{\mu, k} \left( L^{j}\left( f-p \right) \right) \le D_{\mu, k} \left( f-p \right) + D_{\mu, k} \left( L^{j}p \right) < \varepsilon.
	\end{equation*}
	Consequently,  we have that $D_{\mu, k} \left( L^{j}f \right) \to 0$ as $j\to \infty.$
    \end{proof}

    \begin{proposition}\label{one step up formula for measure}
    Let $k\in \mathbb Z_{\geqslant 0},$  and $\mu\in M_+(\overline{\D}).$ Then for any $f \in\mathscr D_{\mu,k+1},$ we have that 
    \begin{equation*}
	\sum_{j=1}^{\infty} D_{\mu, k} \left( L^{j}f \right) = D_{\mu, k+1} (f).
    \end{equation*}
    \end{proposition}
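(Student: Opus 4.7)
The plan is to derive the identity by iterating the one-step difference formula of Proposition \ref{prop:diff-formula}. Given $f\in\mathscr D_{\mu,k+1}$, I would first write $f(z)=f(0)+z(Lf)(z)$. Since $D_{\mu,k+1}$ depends only on the $(k+1)$-th derivative of its argument and $k+1\ge 1$, the constant term $f(0)$ contributes nothing, so $D_{\mu,k+1}(f)=D_{\mu,k+1}(z\cdot Lf)$. Applying Proposition \ref{prop:diff-formula} (with $k$ replaced by $k+1$) to the function $Lf$, which lies in $\mathscr D_{\mu,k+1}$ by Lemma \ref{prop:closed-under-backward-shift}(i), then yields
\begin{equation*}
D_{\mu,k+1}(f)=D_{\mu,k+1}(Lf)+D_{\mu,k}(Lf).
\end{equation*}

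Next, I would iterate this identity. Substituting $L^{j-1}f$ for $f$ (each $L^{j}f$ remains in $\mathscr D_{\mu,k+1}$ by repeated application of Lemma \ref{prop:closed-under-backward-shift}(i)) and summing the telescoping relations for $j=1,\dots,N$ produces
\begin{equation*}
D_{\mu,k+1}(f)=\sum_{j=1}^{N} D_{\mu,k}(L^{j}f)+D_{\mu,k+1}(L^{N}f).
\end{equation*}

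Finally, I would pass to the limit $N\to\infty$. Lemma \ref{prop:closed-under-backward-shift}(ii) guarantees that $D_{\mu,k+1}(L^{N}f)\to 0$, so the partial sums on the right converge and the claimed identity $\sum_{j=1}^{\infty}D_{\mu,k}(L^{j}f)=D_{\mu,k+1}(f)$ follows at once. I do not anticipate any serious obstacle here: the one-step difference formula and the backward-shift decay lemma already do the heavy lifting. The only subtlety worth flagging is the innocuous observation that the constant $f(0)$ can be discarded, which is immediate from the integral representation of $D_{\mu,k+1}$ since $(f(0))^{(k+1)}=0$ whenever $k+1\ge 1$. The case $k=0$ on the outer index is handled uniformly by the same argument, using the $k=1$ instance of Proposition \ref{prop:diff-formula} (which reduces to Richter's original identity involving $D_{\mu,0}$).
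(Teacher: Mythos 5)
Your proof is correct and, for $k\ge 1$, it is essentially the argument the paper gives: telescope the one-step difference formula $D_{\mu,k+1}(f)=D_{\mu,k+1}(Lf)+D_{\mu,k}(Lf)$ (obtained from Proposition \ref{prop:diff-formula} applied at level $k+1$ to $Lf$, using $f-f(0)=z\,Lf$) and then let the remainder $D_{\mu,k+1}(L^{N}f)$ vanish via Lemma \ref{prop:closed-under-backward-shift}(ii). The one place you diverge from the paper is the case $k=0$: the paper first reduces, via the disintegration $D_{\mu,k}(f)=\int D_{\zeta,k}(f)\,d\mu(\zeta)$, to $\mu=\delta_\zeta$ with $\zeta\in\D$ (citing \cite{GGR22} for the circle part) and then proves the $k=0$ identity by a direct power-series computation $D_{\zeta,1}(f)=\sum_{m\ge1}\lvert (L^m f)(\zeta)\rvert^2$, whereas you run the same telescoping machinery with Proposition \ref{prop:diff-formula} at level $1$ (Richter--Aleman's formula $D_{\mu,1}(zg)-D_{\mu,1}(g)=D_{\mu,0}(g)$). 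This is a legitimate and slightly cleaner route: it avoids the reduction to the Dirac case and the separate base-case computation, treating all $k\ge 0$ uniformly. Nothing is circular, since Proposition \ref{prop:diff-formula} and Lemma \ref{prop:closed-under-backward-shift} are established independently of the present proposition.
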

    
\begin{proof}
Write $\mu=\mu_1+\mu_2$ where $\mu_1 = \mu |_{\D}$ and $\mu_2 = \mu |_{\T}$. The result is already established in \cite[Lemma 2.10 and Corollary 3.3]{GGR22} when $\mu=\mu_2.$ Therefore, in view of the equation \eqref{eqn:local-dirichlet-integral}, it is sufficient to prove the result for the case of $\mu=\delta_{\zeta}$ for $\zeta\in\D.$ First we consider the case $k=0$.  Consider a function $f\in \mathscr D_{\zeta,1}$ and assume that the power series representation of $f$ is given by $f\left( z \right) = \sum_{m \ge 0} a_m z^m$. For $z, \zeta\in\mathbb D,$ we have
    \begin{align*}
	\frac{f\left( z \right) - f\left( \zeta \right)}{z-\zeta} 
	&= \sum_{m\ge 1} \left( a_{m} \sum_{j=0}^{m-1} z^{m-1-j} \zeta ^{j} \right) \\
	&=\sum_{m\ge 1} \left( \sum_{j\ge m} a_{j} \zeta^{j-m} \right) z^{m-1}  \\
	&= \sum_{m\ge 1} \left(\left( L^{m}f \right) \left( \zeta \right)\right) z^{m-1}.
    \end{align*}
    Consequently, we have 
    \begin{align*}
	D_{\zeta, 1} \left( f \right) &= \norm{\frac{f-f\left( \zeta \right)}{z-\zeta}}_{H^{2}}^{2} = \sum_{m\ge 1} \abs{\left( L^{m}f \right) \left( \zeta \right)}^{2} \\
    &= \sum_{m\ge 1} \abs{\left( L^{m}f \right) \left( \zeta \right)}^{2}
    = \sum_{m\ge 1} D_{\zeta, 0} \left( L^{m}f \right).
    \end{align*}
   This establishes the identity when $k=0.$ 
   
   Now consider the case of $k\ge1$. Let $\mu\in M_+(\overline{\D})$ and $f\in \mathscr D_{\mu,k+1}.$ Since $zL(f) = f-f\left( 0 \right),$ it follows that $D_{\mu, k+1} \left( zLf \right)= D_{\mu,k+1} \left( f \right).$ By the difference formula in Proposition \ref{prop:diff-formula} and Remark \ref{zf vs f}, we have that $Lf\in \mathscr D_{\mu,k+1} \cap  \mathscr D_{\mu,k},$ and 
   $D_{\mu, k+1} \left( f \right) = D_{\mu,k+1} \left( Lf \right) + D_{\mu, k} \left( Lf \right)$ for any $f \in \mathscr D _{\mu, k+1}$. By induction, we have that for each $m \ge 1$,
    \begin{equation*}
	D_{\mu, k+1} \left( f \right) = D_{\mu, k+1} \left( L^{m}f \right) + \sum_{n=1}^{m} D_{\mu, k} \left( L^{n}f \right).
    \end{equation*}
The required identity now follows from part $\rm(ii)$ of Lemma \ref{prop:closed-under-backward-shift}.
\end{proof}

In the following two propositions, we explore the embedding relationship between the weighted Dirichlet-type space $\mathscr{D}_{\mu, k}$ and $\mathscr{D}_{\sigma,j}= \mathscr D_j.$ These results were already established in \cite[Proposition 5.5, Proposition 5.7]{Rydhe19} in the special case where the measure $\mu$ is supported on the unit circle $\mathbb T.$ 

Before proceeding, we record an auxiliary lemma that will be instrumental in proving the next two propositions. The proof of the following lemma follows from routine computations combined with an application of the higher-order Littlewood–Paley formula in Theorem  \ref{thm:Littlewood-Paley-Higher-Order}.

\begin{lemma}
    Let $\zeta\in \D$, $n \in \Z_{\ge 0}.$ Let $f \in \operatorname{Hol} (\D)$ and $f(z)= \sum_{j\geqslant 0} \hat{f}(j) z^j,$ for $z\in\D.$ Define 
    \begin{equation*}
        g_{\zeta} (z):= \frac{f(z) - f(\zeta)}{z-\zeta} = \sum_{j =0}^{\infty} \hat{g}(j) z^j, \qquad z \in \D.
    \end{equation*}
    Then 
    \begin{equation*}
        D_{\zeta, n+1} (f) = \sum\limits_{j= n}^{\infty} \binom{j}{n} \abs{\hat{g}(j)}^{2},
    \end{equation*}
    where 
    \begin{equation*}
        \hat{g}(j) = \sum\limits_{m=j+1}^{\infty} \hat{f} (m) \zeta^{m-1-j},\qquad j\in \mathbb Z_{\geqslant 0},
    \end{equation*}  
    \begin{equation*}
        \hat{f} (m) = \hat{g}(m-1)-\zeta \hat{g}(m), \qquad m \in \Z_{\ge 1}.
    \end{equation*}
    \label{lem:local-dirichlet-integral-in-terms-of-coefficients}
\end{lemma}

\begin{proposition}\label{prop:sigma-vs-mu}
    Let $\mu \in M_{+} (\overline{\D})\setminus \{0\}.$ Then following holds :
    \begin{itemize}
        \item[(i)] If $k \in \Z_{\ge 0},$ then
        \begin{equation*}
            D_{\sigma, k} (f) \lesssim \abs{\hat{f} (k)}^2 + D_{\mu, k+1} (f), \qquad f\in \operatorname{Hol}(\D).
        \end{equation*}
        \item[(ii)] If $k\in \mathbb Z_{\geqslant 1},$ then
        \begin{equation*}
         D_{\mu,k}(f) \lesssim D_{\sigma,k}(f) + D_{\sigma, k+1}(f), \qquad f\in \operatorname{Hol}(\D).
        \end{equation*}
    \end{itemize}
\end{proposition}

\begin{proof}\begin{itemize}
    \item[(i)] We decompose $\mu=\mu_1+\mu_2,$ where $\mu_{1}= \mu|_{\D}$ and $\mu_2 = \mu|_{\T}$. If $\mu_1=0,$ the proof is complete by \cite[Proposition 5.5 (i)]{Rydhe19}. Therefore, we may assume $\mu_2 =0,$ without loss of generality. Now, let $f(z) = \sum_{j \ge 0} \hat{f}(j) z^j.$ Then we have 
    \begin{align*}
        D_{\sigma, k} (f)
        &= \abs{\hat{f}(k)}^2 + \sum\limits_{j=k+1}^{\infty} \binom{j}{k} \abs{\hat{f} (j)}^2 \\
        &\le \abs{\hat{f}(k)}^2 + 2 \sum_{j=k+1}^{\infty} \binom{j}{k} \left( \abs{\hat{g}(j)}^2 + \abs{\hat{g}(j-1)}^2 \right),
    \end{align*}
    where $\hat{g}(j)$ is defined as in Lemma \ref{lem:local-dirichlet-integral-in-terms-of-coefficients}. Since $\binom{j+1}{k}\le (k+1) \binom{j}{k}$, for $j\geqslant k,$ we find that
    \begin{align*}
        D_{\sigma, k} (f) &\le \abs{\hat{f} (k)}^2  + 2 \sum\limits_{j=k+1}^{\infty} \binom{j}{k} \abs{\hat{g}(j)}^2 + 2 (k+1) \sum\limits_{j=k}^{\infty} \binom{j}{k} \abs{\hat{g}(j)}^2\\
        &\le \abs{\hat{f(k)}}^2 + 2(k+2) D_{\zeta, k+1} (f),
    \end{align*}
    where the last inequality can be inferred from Lemma \ref{lem:local-dirichlet-integral-in-terms-of-coefficients}. Integrating both sides with respect to the measure $\mu$, we obtain 
    \begin{equation*}
        D_{\sigma, k} (f)  \le \abs{\hat{f}(k)}^2 + \frac{2 (k+2)}{\mu (\D)} D_{\mu, k+1} (f),
    \end{equation*}
    which is exactly what we wanted to show.
    
    \item[(ii)] We prove the result by induction on $k.$ The case of $k=1$ is follows from Proposition \ref{Carleson measure}. Assume that the result holds true for $k=1,2,\ldots,n,$ for some $n\geqslant 1.$ Therefore there exists a $C>0$ such that $D_{\mu,n}(f) \leqslant C\left( D_{\sigma,n}(f) + D_{\sigma, n+1}(f)\right),$ for every $f\in \operatorname{Hol}(\D).$  In particular, we have
    \begin{align*}
        D_{\mu,n}(L^jf) \leqslant C\left( D_{\sigma,n}(L^jf) + D_{\sigma, n+1}(L^jf)\right),
    \end{align*} for each $j\in \mathbb Z_{\geqslant 1}$ and $f\in \operatorname{Hol}(\D).$ In view of Proposition \ref{one step up formula for measure}, it then follows that 
    \begin{align*}
        D_{\mu,n+1}(f) \leqslant C \left ( D_{\sigma,n+1}(f) + D_{\sigma, n+2}(f)\right).
    \end{align*} This completes the induction step for $k=n+1.$
\end{itemize}  
\end{proof}

\begin{proposition} \label{prop:boundedness-of-mu}
    Let $\mu \in M_{+} (\overline{\D})\setminus \{0\},$ and $k \in \Z_{\ge 1}$. Then 
    \begin{equation*}
        D_{\mu, k} (f) \le D_{\mu, k} (zf) \lesssim \abs{\hat{f} (k-1)}^2 + D_{\mu, k} (f),\qquad f\in \operatorname{Hol}(\D).
    \end{equation*}
    Consequently, for a function $f\in \operatorname{Hol} (\D),$ we have that $f \in \mathscr{D}_{\mu, k}$ if and only if $zf \in \mathscr{D}_{\mu, k}$. Furthermore, it follows that $\mathscr{D}_{\mu, k} \subset \mathscr{D}_{\mu, k-1}$ for every $k\in \mathbb Z_{\geqslant 2}.$
\end{proposition}
\begin{proof}
     Decompose $\mu=\mu_1+\mu_2,$ where $\mu_{1}= \mu|_{\D}$ and $\mu_2 = \mu|_{\T}$. The result is established in \cite[Proposition 5.7]{Rydhe19} for the case when $\mu=\mu_2 \in M_{+} (\T)\setminus \{0\},$ therefore, it is sufficient to consider the case when $\mu=\mu_1.$ Moreover, note that the result for $k=1$ follows from \cite[IV, Proposition 1.6]{1993multiplication}. Therefore we assume that $k\geqslant 2.$ To this end, let $\zeta \in \D$. Write  $f(z)= \sum_{j\geqslant 0} \hat{f}(j) z^j,$ for $z\in\D.$ Define
    \begin{equation*}
        g(z) := \frac{f(z) - f(\zeta)}{z-\zeta}= \sum_{j =0}^{\infty} \hat{g}(j) z^j, \qquad z\in \D.
    \end{equation*}
    Then
    \begin{equation*}
        \frac{zf - (zf) (\zeta)}{z-\zeta} = zg+f(\zeta), \qquad z\in \D.
    \end{equation*}
    
    Consequently, we have
    \begin{align*}
        D_{\zeta, k} (zf) & = D_{\sigma, k-1} (zg)  = \abs{\hat{g}(k-2)}^2 + \sum\limits_{j=k-1}^{\infty} \binom{j+1}{k-1} 
        \abs{\hat{g} (j)}^2,\\
        & \leqslant \abs{\hat{g}(k-2)}^2 + k \sum\limits_{j=k-1}^{\infty} \binom{j}{k-1} \abs{\hat{g} (j)}^2\\
        &= \abs{\hat{g}(k-2)}^2 + k D_{\sigma, k-1} (g).
    \end{align*}
By Lemma \ref{lem:local-dirichlet-integral-in-terms-of-coefficients}, we have
    \begin{equation*}
        \hat{g} (k-2) = \hat{f} (k-1) + \zeta\hat{g}(k-1)
    \end{equation*}
    and consequently,
    \begin{equation*}
        \abs{\hat{g} (k-2)}^{2} \le 2 \abs{\hat{f}(k-1)}^2 + 2 \abs{\hat{g} (k-1)}^{2} \leqslant  2 \abs{\hat{f}(k-1)}^2 + 2 D_{\sigma, k-1} (g).
    \end{equation*}
    Thus,
    \begin{equation*}
        D_{\zeta, k} (zf) \leqslant  2 \abs{\hat{f} (k-1)}^2 + (k+2) D_{\sigma, k-1} (g) = 2 \abs{\hat{f}(k-1)}^2 + (k+2) D_{\zeta, k} (f).
    \end{equation*}
    Integrating both sides with respect to the measure $\mu_1$ and using equation \eqref{eqn:local-dirichlet-integral}, we obtain the first part of the Proposition. 

Now the last part, namely the inclusion $\mathscr D_{\mu,k}\subset \mathscr D_{\mu,k-1}$ for each $k\in \mathbb Z_{\geqslant 1},$ follows immediately from the Proposition \ref{prop:diff-formula}.
\end{proof}

Let $k\in \mathbb Z_{\geqslant 1}.$ From Proposition \ref{prop:sigma-vs-mu},  we find that $ \mathscr D_{\mu, k} \subset \mathscr D_{\sigma, k-1},$ for each $\mu\in M_+(\overline{\D}).$ Since $\mathscr D_{\sigma,k-1}\subset H^2,$ we may define the norm on $\mathscr D_{\mu, k}$ by 
\begin{equation*}
    \norm{f}_{\mathscr D_{\mu, k}} ^2 := \norm{f}_{H^2} ^2 + D_{\mu, k} (f), \qquad f \in \mathscr D_{\mu, k}.
\end{equation*}
It is straightforward to verify that $\mathscr D_{\mu,k}$ is a Hilbert space with respect to the above norm. Let $n\in\mathbb Z_{\geqslant 0}.$ Since for each $w\in \D,$ the map $f\mapsto f^{(n)}(w)$ is a bounded linear functional on $H^2,$ it follows that the map $f\mapsto f^{(n)}(w)$ is also a bounded linear functional on $\mathscr D_{\mu,k}.$ This gives, in particular,  $\mathscr D_{\mu,k}$ is a reproducing kernel Hilbert space.
    
\begin{proposition}
       Let $\mu\in M_+(\overline{\D}).$ For each $k \ge 1$, the operator $M_z$ on $\mathscr D_{\mu, k} $ is a bounded and cyclic operator with cyclic vector $1.$ 
    \end{proposition}
    \begin{proof}
        Since $\mathscr D _{\mu, k}$ is a reproducing kernel Hilbert space, an application of the Proposition \ref{prop:boundedness-of-mu} together with closed graph theorem  will give us that $M_z$ on $\mathscr D_{\mu, k} $ is a bounded.
The proof of cyclicity of $M_z$ with cyclic vector $1$ follows immediately from Proposition \ref{prop:dilations} and the arguments used in Corollary \ref{cor:density-of-poly}.
    \end{proof}

We now study some properties of the multiplication operator $M_z$ on $\mathscr D_{\mu,k}.$ Recall that for a non negative integer $n\in \mathbb Z_{\geqslant 0}$ and a bounded operator $T$ acting on a Hilbert space $\mathcal H,$  the $n$-th backward difference $B_n(T)$ and the $n$-th forward difference $\beta_n(T)$ of is defined by
\begin{align*}
    B_n(T)= \sum\limits_{j=0}^n (-1)^j\binom{n}{j}{T^*}^j T^j,\quad  \beta_n(T)= \sum\limits_{j=0}^n (-1)^{n-j}\binom{n}{j}{T^*}^j T^j.
\end{align*} 

 An operator $T \in \mathcal{B}(\mathcal{H})$  is called completely hyperexpansive if $B_n(T) \leqslant 0$ for every $n \geqslant 1$,  see \cite{Atha95,1993multiplication,Athvale2000,Jablonski2002} and the references therein for further developments on completely hyperexpansive operators. More generally, an operator \(T\) is called a completely hyperexpansion of order \(k\) if \(B_n(T)\leqslant 0\) for every \(n \geqslant k\), see \cite[Sec. 3]{CS17}. In a similar manner, an operator \(T \in \mathcal{B}(\mathcal{H})\) is called completely hypercontraction if \(B_n(T) \geqslant 0\) for every \(n \geqslant 1\), see \cite{Aglerhyperc}. More generally, \(T\) is called a completely hypercontraction of order \(k\) if \(B_n(T) \geqslant 0\) for every \(n \geqslant k\); see \cite[Sec. 4]{CMF}. 

 We remark that every completely hyperexpansive operator of order $k$ is necessarily completely hyperexpansive of order $(k-1),$ whenever $k$ is an even positive integer, see  \cite[Lemma 3.13]{CS17} and \cite[Theorem 2.5]{2015Gu}. Similarly every completely hypercontractive operator of order $k$ is necessarily completely hypercontractive of order $(k-1),$ whenever $k$ is an odd positive integer, see \cite[Theorem 2.5]{2015Gu}. 

In order to unify and study these two classes of operators together, namely complete hyperexpansion and complete hypercontraction of finite order, and motivated by the terminology in \cite[IV, Definition 1.2]{1993multiplication},  we introduce the notion of \textit{Dirichlet type operators of finite order}.  Let $k\in \mathbb Z_{\geqslant 1}.$ An operator $T \in \calB \left( \calH \right)$ is called a Dirichlet type operator of order $k$ if $(-1)^kB_{n} \left( T \right) \ge 0$ for all $n \ge k.$  In the following Proposition we find that the operator $M_z$ on $\mathscr D_{\mu, k}$ is a Dirichlet type operator of order $k.$ 
\begin{proposition}\label{thm:shift-completely-hyperstuff}
 Let $k\in \mathbb Z_{\geqslant 1}$  and $\mu\in M_+(\overline{\D}).$ The operator $M_{z}$ on $\mathscr D_{\mu, k}$ is a Dirichlet type operator of order $k.$ Moreover $M_{z}$ is a $(k+1)$-isometry if and only if $\mu$ is supported on $\mathbb T.$
	    \end{proposition}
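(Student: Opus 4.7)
The plan is to compute the quadratic form $f \mapsto \langle B_n(M_z) f, f\rangle_{\mathscr D_{\mu,k}}$ explicitly by iterating the recursion $B_{n+1}(T) = B_n(T) - T^* B_n(T)\,T$, which at the level of sesquilinear forms reads
\begin{equation*}
\langle B_{n+1}(M_z) f, g\rangle_{\mathscr D_{\mu,k}} \;=\; \langle B_n(M_z) f, g\rangle_{\mathscr D_{\mu,k}} \;-\; \langle B_n(M_z)(zf), zg\rangle_{\mathscr D_{\mu,k}}.
\end{equation*}
The main engine is Proposition \ref{prop:diff-formula}, which lets us drop one order in $D_{\mu,j}$ each time an $M_z$ is pushed through. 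Throughout, write $d_j$ for the polarization of $D_{\mu,j}$ and carry out the computations on analytic polynomials first, extending by density (Corollary \ref{cor:density-of-poly}).

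I would first show by induction on $n\in\{1,\dots,k\}$ that
\begin{equation*}
\langle B_n(M_z) f, g\rangle_{\mathscr D_{\mu,k}} \;=\; (-1)^n\, d_{k-n}(f, g), \qquad f, g \in \C[z].
\end{equation*}
The base case $n=1$ uses $\|zf\|_{H^2} = \|f\|_{H^2}$ together with Proposition \ref{prop:diff-formula}, and the inductive step applies Proposition \ref{prop:diff-formula} at level $k-n$ to conclude $(-1)^n[d_{k-n}(f,g) - d_{k-n}(zf,zg)] = (-1)^{n+1} d_{k-n-1}(f,g)$. Specializing to $n=k$ yields $(-1)^k\langle B_k(M_z) f, f\rangle = D_{\mu,0}(f) \ge 0$, settling the Dirichlet-type positivity at $n=k$.

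For $n \ge k+1$, Proposition \ref{prop:diff-formula} no longer applies, so I would compute $d_0(zf, zg) - d_0(f, g)$ directly from the definition of $D_{\mu,0}$. Writing $\mu = \mu_1 + \mu_2$ with $\mu_1 = \mu|_{\D}$ and $\mu_2 = \mu|_{\T}$, the $\mu_1$ piece contributes $-\int_{\D}(1-|z|^2)\,f\bar g\, d\mu_1$, while the boundary piece
\[
\lim_{r\to 1^-}(r^2-1)\int_{\T} f(r\zeta)\overline{g(r\zeta)}\, P_{\mu_2}(r\zeta)\, d\sigma(\zeta)
\]
vanishes for polynomial $f, g$, since the inner integral is bounded uniformly in $r$ by $\|f\|_\infty \|g\|_\infty\, \mu_2(\T)$ (using $\int_{\T} P_{\mu_2}(r\zeta)\, d\sigma(\zeta) = \mu_2(\T)$). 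Plugging this into the recursion and a simple induction on $j \ge 0$ — whose step is the identity $(1-|z|^2)^j - |z|^2(1-|z|^2)^j = (1-|z|^2)^{j+1}$ — gives
\begin{equation*}
(-1)^k \langle B_{k+j}(M_z) f, f\rangle_{\mathscr D_{\mu,k}} \;=\; \int_{\D} (1-|z|^2)^{j}\, |f(z)|^2\, d\mu_1(z) \;\ge\; 0
\end{equation*}
for every polynomial $f$. Both sides extend continuously from $\C[z]$ to $\mathscr D_{\mu,k}$: the left side because $B_{k+j}(M_z)$ is bounded (Proposition \ref{Boundedness of M_z}), and the right side by the resulting a priori bound combined with polynomial density. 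This completes the proof that $M_z$ is a Dirichlet-type operator of order $k$.

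For the $(k+1)$-isometry assertion, the $j=1$ formula yields $(-1)^k \langle B_{k+1}(M_z) f,f\rangle = \int_{\D} (1-|z|^2)|f|^2\, d\mu_1$, so $B_{k+1}(M_z) = 0$ is equivalent to the vanishing of this integral for all $f \in \mathscr D_{\mu,k}$ and, since $1-|z|^2 > 0$ on $\D$, to $\mu_1 = 0$, i.e., $\mu$ being supported on $\T$. The main technical hurdle I anticipate is the clean justification of the vanishing of the boundary piece of $d_0(zf, zg) - d_0(f, g)$ and the continuity-extension step from polynomials to $\mathscr D_{\mu,k}$; handling the identity first on polynomials, where the relevant integrals are trivially finite, and then invoking boundedness of $B_n(M_z)$, is the cleanest way to circumvent these issues.
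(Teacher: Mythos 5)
Your proof is correct and follows essentially the same route as the paper's: iterate Proposition \ref{prop:diff-formula} to descend from $D_{\mu,k}$ to $D_{\mu,0}$ for $n\le k$, then handle $n>k$ by a direct computation of $d_0(zf,zg)-d_0(f,g)$, which isolates the $\mu_1$ contribution because the circle piece cancels. The one small difference is that where you verify the vanishing of the boundary term by hand (the $(r^2-1)$ estimate), the paper simply cites the identity $\sum_{j=0}^{n}(-1)^j\binom{n}{j}D_{\mu_2,k}(z^jf)=0$ for $n>k$ from \cite{GGR22}; your version is a touch more self-contained. One indexing slip to fix: your displayed formula $(-1)^k\langle B_{k+j}(M_z)f,f\rangle=\int_{\D}(1-|z|^2)^{j}|f|^2\,d\mu_1$ is only valid for $j\ge 1$; at $j=0$ the correct value is $D_{\mu,0}(f)$, which still contains the circle contribution $\lim_{r\to 1^-}\int_{\T}|f(r\zeta)|^2P_{\mu_2}(r\zeta)\,d\sigma(\zeta)$. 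Since you have already established $(-1)^k\langle B_k(M_z)f,f\rangle=D_{\mu,0}(f)\ge 0$ separately, this does not affect the conclusion, but the base of the $j$-induction should be stated as $j=1$.
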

    \begin{proof}
     Let $\mu=\mu_1+\mu_2$ where $\mu_1 = \mu |_{\D}$ and $\mu_2 = \mu |_{\T}$. Suppose $f\in \mathscr D_{\mu, k}.$  Proposition \ref{prop:diff-formula} along with an induction argument will give us 
\begin{align}\label{Less difference}
  \sum_{j=0}^{n} \left( -1 \right)^{j} \binom{n}{j} D_{\mu,k}(z^{j}f) 
&= \left( -1 \right)^{n} D_{\mu, k-n} \left( f \right),\quad 1\leqslant n\leqslant k.  
\end{align}
From \cite[Eq. (4.1), p.461]{GGR22}, we also have
 \begin{align*}
     \sum_{j=0}^{n} \left( -1 \right)^{j} \binom{n}{j} D_{\mu_2,k}(z^{j}f)= 0,\quad \mbox{for every\,}\, n> k.
 \end{align*}
 Therefore, again an induction argument together with Proposition \ref{prop:diff-formula}, will give 
   \begin{align}\label{more difference}
  \sum_{j=0}^{n} \left( -1 \right)^{j} \binom{n}{j}  D_{\mu,k}(z^{j}f)  &= \left( -1 \right)^{k} \int_{\D} \left( 1-\abs{z}^{2} \right)^{n-k} \abs{f\left( z \right)}^{2} \mathrm{d}\mu_1 \left( z \right),\quad n > k. 
\end{align}  
The norm of $f\in\mathscr D_{\mu,k}$ is given by
\begin{align*}
    \|f\|^2= \|f\|_{H^2}^2 + D_{\mu,k}(f).
\end{align*}
 Note that $\|z^jf\|_{H^2}^2=\|f\|_{H^2}^2$ for every $j\in \mathbb Z_{\geqslant 1}.$ Now it is straightforward to verify that 
     \begin{align*}
        \langle B_n(M_z)f,f\rangle = \begin{cases}
            (-1)^kD_{\mu,0}(f), & \mbox{if}\,\,n=k,\\
            \left( -1 \right)^{k} \displaystyle \int_{\D} \left( 1-\abs{z}^{2} \right)^{n-k} \abs{f\left( z \right)}^{2} \mathrm{d}\mu_1 \left( z \right), &  \mbox{if}\,\, n > k.
        \end{cases}
     \end{align*}
     Thus if $k$ is an odd positive integer, $B_n(M_z)\leqslant 0$ for every $n\geqslant k.$ And if $k$ is an even positive integer, then $B_n(M_z)\geqslant 0$ for every $n\geqslant k.$ Moreover it follows that $B_{k+1}(M_z)=0$ if and only if $\mu_1=0,$ that is, $\mu$ is supported on $\mathbb T.$
    \end{proof}
For various spectral properties of a Dirichlet type operator of finite order, we refer the readers to \cite[Sec. 4]{CMF} and \cite[Sec. 3]{CS17}.  The study of Dirichlet type operator of order $m$ is intimately related to the completely alternating functions of finite order and completely monotone functions of finite order,  see \cite[Sec. 2]{CS17}, \cite[Sec. 3]{CMF}. A sequence $\{\psi(n)\}_{n \in \mathbb Z_{\geqslant 0}}$ of real number is called completely alternating of order $m$ if $(\triangledown^j \psi)(n) \leqslant 0$ for every $j\geqslant m$ and $n\in \mathbb Z_{\geqslant 0},$ where the backward difference sequence $(\triangledown \psi)$ is defined by $(\triangledown \psi)(n)= \psi(n)-\psi(n+1)$ for $n\in \mathbb Z_{\geqslant 0}$ and $(\triangledown ^j\psi)$ are inductively defined by 
\begin{align*}
    (\triangledown ^0 \psi)=\psi, \qquad (\triangledown ^{j+1}\psi)= \triangledown (\triangledown^j \psi),\qquad j\in \mathbb Z_{\geqslant 0}.
\end{align*}
On the other hand, a sequence $\{\psi(n)\}_{n \in \mathbb Z_{\geqslant 0}}$ of real number is called completely monotone of order $m$ if $(\triangledown^j \psi)(n) \geqslant 0$ for every $j\geqslant m$ and $n\in \mathbb Z_{\geqslant 0}.$ It is straightforward to verify that an operator $T\in \mathcal B(\calH)$ is a Dirichlet type operator of order $(2k-1)$ if and only if the sequence $\psi(n)= \|T^nh\|^2$ for $n\in \mathbb Z_{\geqslant 0},$ is completely alternating of order $(2k-1)$ for every $h\in \mathcal H.$  Similarly, an operator $T\in \mathcal B(\calH)$ is a Dirichlet type operator of order $2k$ if and only if the sequence $\psi(n)= \|T^nh\|^2$ for $n\in \mathbb Z_{\geqslant 0},$ is completely monotone of order $2k$ for every $h\in \mathcal H.$ In the following lemma we find a necessary condition on the growth of the sequence $\psi(n)=\|T^nh\|^2$ when $T$ is a Dirichlet type operator of order $m.$
\begin{lemma}\label{order of CAk}
    Let $m\in\mathbb Z_{\geqslant 1}$ and $T\in \mathcal B(\calH)$ be a Dirichlet type operator of order $m.$ Then for each $h\in\mathcal H,$ we have that $\|T^nh\|^2=O(n^{m}).$
\end{lemma}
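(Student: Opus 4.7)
The plan is to reduce the claim to a statement about the scalar sequence $\psi(n) := \|T^n h\|^2$ and then conclude by telescoping. First I would observe that a direct expansion of $B_j(T)$ yields
\begin{align*}
\langle B_j(T) T^k h, T^k h\rangle = \sum_{l=0}^{j} (-1)^l \binom{j}{l} \|T^{k+l} h\|^2 = (\triangledown^j \psi)(k)
\end{align*}
for every $j, k \in \mathbb{Z}_{\geqslant 0}$, where $\triangledown$ is the backward difference operator defined in the paper. Consequently, the hypothesis that $T$ is Dirichlet of order $m$, i.e.\ $(-1)^m B_j(T) \ge 0$ for $j \ge m$, applied to the vector $T^k h$ translates to the scalar inequality
\begin{align*}
(-1)^m (\triangledown^j \psi)(k) \ge 0, \quad j \ge m,\ k \in \mathbb{Z}_{\geqslant 0}.
\end{align*}

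Next, I would introduce the auxiliary sequence $\phi(n) := (-1)^m (\triangledown^m \psi)(n)$. Taking $j=m$ in the displayed inequality gives $\phi \ge 0$, and taking $j=m+1$ gives $(\triangledown \phi)(n) = (-1)^m (\triangledown^{m+1}\psi)(n) \ge 0$, so $\phi$ is non-increasing on $\mathbb{Z}_{\geqslant 0}$. In particular, $\phi(n) \le \phi(0) < \infty$ for every $n$, so $(\triangledown^m \psi)(n)$ is bounded.

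The growth estimate then follows by a short downward induction. For any sequence $f$ on $\mathbb{Z}_{\geqslant 0}$, one has the telescoping identity $f(n) = f(0) - \sum_{k=0}^{n-1} (\triangledown f)(k)$. I would prove by induction on $i \in \{0,1,\ldots,m\}$ that there exists a constant $C_i = C_i(T,h) > 0$ with
\begin{align*}
|(\triangledown^{m-i}\psi)(n)| \le C_i (n+1)^i, \quad n \in \mathbb{Z}_{\geqslant 0}.
\end{align*}
The base case $i=0$ is exactly the uniform bound on $\phi$ obtained above, and the step from $i$ to $i+1$ applies the telescoping identity to $f = \triangledown^{m-i-1}\psi$, combined with the elementary estimate $\sum_{k=0}^{n-1}(k+1)^i = O(n^{i+1})$. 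Taking $i=m$ yields $\psi(n) = O(n^m)$, which is exactly the claim.

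No step presents a serious obstacle. The only mildly delicate point is noting that the monotonicity of $\phi$ (rather than merely its positivity) is essential for the induction to start, and this monotonicity uses the Dirichlet hypothesis at $j = m+1$ in addition to $j = m$; it would fail if one only assumed $(-1)^m B_m(T) \ge 0$.
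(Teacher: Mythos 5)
Your proof is correct, and it takes a genuinely different and more elementary route than the paper's. The paper invokes the full integral representation theorems for completely alternating sequences of finite order (\cite[Theorem 2.5]{CS17}) and completely monotone sequences of finite order (\cite[Theorem 3.6]{CMF}), which produce a polynomial plus an integral against a Radon measure on $[0,1]$; it then needs a combinatorial rearrangement (the identity in equation \eqref{magic identity} of the paper) to extract the $O(n^m)$ bound, and it treats the odd and even cases of $m$ separately. Your argument sidesteps all of this: it observes that the Dirichlet hypothesis at $j=m$ and $j=m+1$ already forces $(-1)^m\triangledown^m\psi$ to be a nonnegative, non-increasing sequence, hence bounded, and then recovers the polynomial growth of $\psi$ by $m$ successive telescoping summations. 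This gives a uniform treatment of both parities of $m$, uses no outside structure theorems, and makes transparent that only the two lowest-order Dirichlet inequalities are actually needed for the growth bound. The paper's heavier machinery does, in principle, yield finer structural information about $\psi$ (the explicit polynomial-plus-measure decomposition), but for the stated $O(n^m)$ conclusion your argument is both shorter and more self-contained.
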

\begin{proof}
 Let $\psi(n)= \|T^nh\|^2$ for $n\in \mathbb Z_{\geqslant 0}.$ First we consider the case when $m$ is an odd positive integer. In this case $\psi$ is a completely alternating sequence of order $m,$ from \cite[Theorem 2.5]{CS17}, we find that there exists a polynomial $p_m$ of degree at most $m$ and a finite positive Radon measure $\mu$ on $[0,1]$ such that for every $n\in \mathbb Z_{\geqslant 0},$
 \begin{align*}
   \psi(n) = p_{m}(n) + \int\limits_{[0,1]}\sum_{j=0}^{n-m-1}\left((-1)^{m}\frac{(x-1)^{j}}{(j+m+1)!}(n)_{\overline{j+m+1}}\right)d\mu(x),
\end{align*}
    where the integral in the last expression is absent if $n < m+1$ and $(a)_{\overline{j}}=a(a-1)\cdots(a-j+1)$ denotes the falling factorial for any $a\in \mathbb R$. We also use $(a)^{\overline{j}}=a(a+1)\cdots(a+j-1)$ to denote the rising factorial for any $a\in \mathbb R.$ We claim that for every $n\in  \mathbb Z_{\geqslant 0},$
    \begin{equation}\label{CA representation}
	\psi \left( n \right) = p_{m} \left( n \right) + \left( -1 \right)^{m} \sum_{j=0}^{n-m-1} \frac{\left( n-m-j \right)^{\overline{m}}}{m!} \hat{\mu} \left( j \right)
    \end{equation}
    where 
    $\hat{\mu} \left( j \right) = \int\limits_{[0,1]} x^{j} \mathrm{d}\mu \left( x \right)$.
    Observe that to prove the claim it suffices to show that for each $n \in \mathbb{Z}_{\ge 0}$ and $x \in \mathbb R$,
    \begin{equation}\label{magic identity}
	\sum_{j=0}^{n-m-1} \binom{n}{j+m+1} \left( x-1 \right)^{j} = 
	\sum_{j=0}^{n-m-1} \binom{n-j-1}{m} x^{j}.
    \end{equation}
The above identity can easily be verified by an induction argument and we omit the details.  Since $m$ is an odd positive integer and $\mu$ is a finite positive measure on $[0,1],$ using \eqref{CA representation}, we obtain that for each $n \in \mathbb Z_{\geqslant 0},$ 
    \begin{equation*}
	0 \leqslant \psi \left( n \right) \le p_{m} \left( n \right) = O \left( n^{m} \right).
    \end{equation*}  This completes the proof for the case when $m$ is an odd positive integer. Now consider that $m$ is an even positive integer. Using the characterization of the completely monotone sequences of finite order as given in \cite[Theorem 3.6]{CMF}, we get that there exists a polynomial $p_{m-1}$ of degree at most $(m-1)$ and a finite positive Radon measure $\mu$ on $[0,1]$ such that for every $n\in \mathbb Z_{\geqslant 0},$
    \begin{align*}
        \psi(n) &= p_{m-1}(n) + \int\limits_{[0,1]} \sum_{j=0}^{n-m} \frac{(x - 1)^j}{(j + m)!}  (n)_{\overline{j+m}} \, d\mu(x),\\
        &= p_{m-1} \left( n \right) +  \sum_{j=0}^{n-m} \frac{\left( n-m-j-1 \right)^{ \overline{m-1} }}{\left( m-1 \right)!} \hat{\mu} \left( j \right)
    \end{align*}
Here the second equality follows from the identity \eqref{magic identity}. Hence we obtain that 
\begin{equation*}
	\abs{\psi \left( n \right)} \le |p_{m-1} \left( n \right)|+ \frac{\mu \left( [0,1] \right)}{(m-1)!} \left( n-m-1 \right)^{ \overline{m-1}} \left( n-m+1 \right). 
    \end{equation*}
As $p_{m-1}$ is a polynomial of degree at most $(m-1),$ it follows that $\psi(n)=O(n^m).$ This completes the proof of the lemma.
\end{proof}

\begin{remark}\label{another example}
    To provide another class of examples of operator of Dirichlet type of finite order, we consider the operator $M_z$ on $\mathscr D_{\alpha}$ as introduced in the Introduction. Let $k\in\mathbb Z_{\geqslant 1}$ and $\alpha \in [k-1, k].$ Then $M_z$ is a Dirichlet type operator of order $k.$ This follows by considering the function $f: \R \to \R$ given by $f(x)= (x+1)^{\alpha}$ and using the higher order mean value theorem which says if $f : \R_{\geqslant 0} \to \R$ is a $C^{\infty}$ function then for each $n \ge 1$, $h > 0$, $x \in \R$, there is some $y \in \left( x, x+nh \right)$ such that 
	    $\frac{1}{h^{n}} \Delta ^{n} _{h} f \left( x \right) = f^{\left( n \right)} \left( y \right),$ where $\Delta ^1 _{h} f (x) := f(x+h) - f(h)$ and for $n \ge 2$, $\Delta ^n _h f(x)$ is defined recursively by $\Delta ^n _h f(x): =\Delta ^1 _h \Delta ^{n-1} _ h f(x)$.
\end{remark}

\section{Representation of Cyclic Dirichlet Type Operators of Finite Order}

Let $\mathcal D$ denote the Fr\'{e}chet space of $C^{\infty}$ functions on $\T,$ and let $\mathcal D'$ denote its topological dual, i.e., the space of distributions on the unit circle $\mathbb T.$  For $k\in\mathbb Z,$ let $\hat{\mu}(k)= \mu (\Bar{\zeta}^k)$ be the standard Fourier coefficients of $\mu\in \mathcal D'.$ The Poisson extension of $\mu\in \mathcal D'$ is the harmonic function $P_{\mu}(z)$ defined by
 \begin{align*}
    P_{\mu}(z):= \sum\limits_{k=0}^{\infty} \hat{\mu}(k)z^k + \sum\limits_{k=1}^{\infty} \hat{\mu}(-k)\Bar{z}^k,\quad z\in\mathbb D.
 \end{align*}
 Given $\mu\in \mathcal D'$ and $f\in \mathbb C[z],$ the set of analytic polynomials, the weighted Dirichlet integral of order $k\in \mathbb Z_{\geqslant 0}$ is defined by
 \begin{align*}
    D_{\mu,k}(f)= 
    \begin{dcases}
         \lim_{r\to 1^{-}}\frac{1}{k! (k-1)!} \int_{r\D}  \abs{f^{\left( k \right)}\left( z\right) }^2 P_{\mu}(z) (1-\abs{z}^2)^{k-1} \, \mathrm{d}A(z), & \mbox{if\,\,\,} k\in\mathbb Z_{\geqslant 1},\\
         \lim_{r\to 1^{-}} \int_{\T}  \abs{f\left( r\zeta\right) }^2 P_{\mu}(r\zeta) \, \mathrm{d}\sigma(\zeta), & \mbox{if\,\,\,} k=0.
     \end{dcases}
 \end{align*}
The following formula provides a convenient way to compute $D_{\mu,k}(f)$ in terms of the Fourier coefficients of both $\mu$ and $f,$ see \cite[Lemma 3.2]{Rydhe19} for the details:
\begin{align}\label{Formula for semi norm}
   D_{\mu,k}(f)= \sum\limits_{j,l=k}^{d}\binom{j\wedge l}{k} \hat{f}(j) \overline{\hat{f}(l)} \hat{\mu}(l-j), \qquad k \in \mathbb Z_{\geqslant 0},
\end{align}
where $j\wedge l= \min\{j,l\}$ and $f \in \mathbb C[z]$ is expressed as $f(z)= \sum_{j=0}^{d}\hat{f}(j) z^j.$  
We refer the reader to \cite{Rydhe19} for additional properties of the higher-order weighted Dirichlet integral.

Let $\overrightarrow{\mu}= (\mu_0,\mu_1,\ldots,\mu_{m})\in (\mathcal D')^{m}\times M_+(\overline{\D})$ and consider the sesquilinear form 
\begin{align*}
    \|f\|^2_{_{\overrightarrow{\mu}}}:= \sum\limits_{j=0}^{m} D_{\mu_j,j}(f), \quad f\in \mathbb C[z].
\end{align*}
If there exists a $C>0$ such that 
\begin{align}\label{allowable}
 0 \leqslant  \|zf\|^2_{_{\overrightarrow{\mu}}}  \leqslant C  \|f\|^2_{_{\overrightarrow{\mu}}},\quad f\in \mathbb C[z],
\end{align}
then we say  $\overrightarrow{\mu}= (\mu_0,\mu_1,\ldots,\mu_{m})$ is an \textit{allowable $(m+1)$-tuple} in $(\mathcal D')^{m}\times M_+(\overline{\D})$. Moreover we say $\overrightarrow{\mu}$ is \textit{normalized} if $\|1\|^2_{_{\overrightarrow{\mu}}}= \hat{\mu_0}(0)=1.$ For any such  allowable tuple, let $\mathcal K_{\overrightarrow{\mu}}= \{f\in \mathbb C[z]: \|f\|_{_{\overrightarrow{\mu}}} =0\}$ and define $\mathscr D_{\overrightarrow{\mu}}$ as the completion of $\mathbb C[z]/\mathcal K_{\overrightarrow{\mu}}$ with respect to the norm induced by the sesquilinear form $\|\cdot\|{_{\overrightarrow{\mu}}}.$ Note that the operator $M_z$ is well defined on  $\mathbb C[z]/\mathcal K_{\overrightarrow{\mu}}$ and extends uniquely as a bounded linear operator on  $\mathscr D_{\overrightarrow{\mu}}.$ Now we are ready to state the other main theorem of this article, namely the model theorem for cyclic  Dirichlet operator type of order $m.$  

In what follows, we show that $M_z$ is a cyclic Dirichlet-type operator of order $m,$ and we will establish that $(M_z,\mathscr D_{\overrightarrow{\mu}})$ serves as a model for every cyclic Dirichlet-type operator of order $m.$

\begin{proof}[Proof of Theorem \ref{Model Theorem}:]
    Let $\overrightarrow{\mu}= (\mu_0,\mu_1,\ldots,\mu_{m})\in (\mathcal D')^{m}\times M_+(\overline{\D})$ be a normalized allowable $(m+1)$-tuple. Further write $\mu_m=\mu_{_{m,1}}+\mu_{_{m,2}}$ where $\mu_{_{m,1}} = {\mu_m} |_{\D}$ and $\mu_{_{m,2}} = {\mu_m} |_{\T}$. If $\mu\in \mathcal D',$ then from \cite[Proposition 3.4]{Rydhe19}, it follows that for every $f\in\mathbb C[z],$
    \begin{align}\label{difference higher order}
        \sum_{j=0}^{n} \left( -1 \right)^{j} \binom{n}{j} D_{\mu_{_{m,2}},k}(z^{j}f)= 0,\quad n> m.
    \end{align}
Using \eqref{Less difference} and \eqref{more difference}, we then have for every $f\in\mathbb C[z],$
\begin{align*}
        \sum\limits_{r=0}^{m}\sum_{j=0}^{n} \left( -1 \right)^{j} \binom{n}{j} D_{\mu_r,r}(z^{j}f) &= 
        \begin{cases}
         \left( -1 \right)^{m}  D_{\mu_m,0}(f) , & \quad \mbox{if\,\,} n=m,\\
        \left( -1 \right)^{m} \displaystyle \int_{\D} \left( 1-\abs{z}^{2} \right)^{n-m} \abs{f\left( z \right)}^{2} \mathrm{d}{\mu_{_{m,1}}} \left( z \right), &\quad \mbox{if\,\,} n > m.
        \end{cases} 
    \end{align*}
    It follows that if $m$ is an odd positive integer, $B_n(M_z)\leqslant 0$ for every $n\geqslant m.$ And if $m$ is an even positive integer, then $B_n(M_z)\geqslant 0$ for every $n\geqslant m.$ Therefore $M_z$ is a cyclic Dirichlet-type operator of order $m.$ Moreover it is straightforward to see that $B_{m+1}(M_z)=0$ if and only if $\mu_{_{m,1}}=0,$ that is, $\mu_m$ is supported on $\mathbb T.$

For the converse direction, let $T\in\mathcal B(\calH)$ be an operator of Dirichlet type of order $m$ with a cyclic unit vector $e.$ The following lemma gives the existence of a normalized  allowable $(m+1)$-tuple $\overrightarrow{\mu} = \left( \mu_{0}, \mu_{1}, \ldots , \mu_{m} \right).$ 
\begin{lemma}\label{existence of distributions}
    Let $m\in \mathbb Z_{\geqslant 1}$ and $T\in \mathcal B(\calH)$ be an operator of Dirichlet type of order $m$ with unit cyclic vector $e.$ Then the following holds:
    \begin{itemize}
    \item[\rm (i)] There exists a positive measure  $\mu_{m}\in M_+(\overline{\D})$ such that 
 \begin{equation*}
	\ip{\beta_{m} (T) T^k e, T^l e}_{\calH} = \int_{\mathbb{\overline{\D}}} z^{k} \overline{z}^{l} \mathrm{d}\mu_{m} (z) \qquad k,l \in \Z _{\ge 0}.
    \end{equation*}
    \item[\rm (ii)] There exist distributions $\mu_j\in \mathcal D'$  for $j=0,1,\ldots,(m-1)$ such that
    \begin{equation*}
    \hat{\mu}_{j} \left( k \right) = \overline{\hat{\mu}_{j} \left( -k \right)} = \ip{\beta_{j} \left( T \right) e, T^{k}e}_{\calH} \qquad  k \in \mathbb Z_{\geqslant 0}.
\end{equation*}
\end{itemize}
\end{lemma}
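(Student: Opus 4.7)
The plan is to establish (i) via a two-dimensional Hausdorff-type moment problem on $\overline{\mathbb{D}}$ and (ii) via a direct construction of distributions from polynomially bounded Fourier sequences. The key algebraic input for both parts is the identity $\beta_n(T) = (-1)^n B_n(T)$ together with the Pascal-type recurrence $\beta_{n+1}(T) = T^*\beta_n(T)T - \beta_n(T)$ (equivalent to $B_{n+1}(T) = B_n(T) - T^* B_n(T) T$, itself an elementary binomial manipulation). Under the Dirichlet type hypothesis $(-1)^m B_n(T) \geqslant 0$ for every $n \geqslant m$, these identities give $\beta_m(T) \geqslant 0$, $\beta_{m+1}(T) \leqslant 0$, and hence the crucial positivity $\beta_m(T) - T^*\beta_m(T) T = -\beta_{m+1}(T) \geqslant 0$.

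For part (i), I would set $s_{k,l} := \langle \beta_m(T) T^k e, T^l e\rangle_{\mathcal H}$ and, for an arbitrary analytic polynomial $p(z) = \sum_k a_k z^k$, verify the two positivity conditions
\[
\sum_{k,l} a_k \overline{a_l}\, s_{k,l} = \langle \beta_m(T) p(T) e,\, p(T) e\rangle \geqslant 0,
\]
\[
\sum_{k,l} a_k \overline{a_l}\, (s_{k,l} - s_{k+1,l+1}) = \langle (\beta_m(T) - T^*\beta_m(T) T)\, p(T) e,\, p(T) e\rangle \geqslant 0.
\]
These conditions say precisely that the linear functional $L$ on $\mathbb C[z, \bar z]$ defined by $L(z^k \bar z^l) := s_{k,l}$ is nonnegative on polynomials of the form $|q|^2$ and $(1-|z|^2)|q|^2$, which are the Hausdorff-type positivity conditions on $\overline{\mathbb D}$. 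Combining a F\'ejer-type factorization identifying (up to uniform closure on the compact set $\overline{\mathbb D}$) nonnegative polynomials as elements of the cone generated by these products with the Riesz--Haviland theorem yields a unique $\mu_m \in M_+(\overline{\mathbb D})$ representing $L$, and hence having the stated moments.

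For part (ii), fix $j \in \{0, 1, \ldots, m-1\}$ and define the bilateral sequence $c_j(k) := \langle \beta_j(T) e, T^k e\rangle$ for $k \geqslant 0$, extended by $c_j(-k) := \overline{c_j(k)}$ for $k \geqslant 1$. By Cauchy--Schwarz and Lemma~\ref{order of CAk}, $|c_j(k)| \leqslant \|\beta_j(T) e\|\, \|T^k e\| = O(|k|^{m/2})$, so the sequence has at most polynomial growth. Defining
\[
\mu_j(\phi) := \sum_{k \in \mathbb Z} \hat\phi(k)\, c_j(-k), \qquad \phi \in \mathcal D,
\]
yields an absolutely convergent series (since $\hat\phi(k)$ is rapidly decreasing) and a continuous linear functional on $\mathcal D$. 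A direct computation of $\hat\mu_j(k) = \mu_j(\bar\zeta^k)$ then recovers $\hat\mu_j(k) = c_j(k) = \langle \beta_j(T) e, T^k e\rangle$ for $k \geqslant 0$, while the reality relation $\hat\mu_j(k) = \overline{\hat\mu_j(-k)}$ holds by construction.

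The main obstacle I foresee lies in part (i): while the Riesz--Haviland framework is classical, the reduction of full nonnegativity of $L$ on the cone of polynomials nonnegative on $\overline{\mathbb D}$ to the two elementary conditions above requires a positivstellensatz on the closed disc. An explicit citation (e.g.\ to Atzmon's moment problem for positive measures on $\overline{\mathbb D}$, or to the closed-disc specialization of Schm\"udgen-type results for compact basic semi-algebraic sets) will be needed at this step. The remaining ingredients --- the binomial recurrences for $B_n$ and $\beta_n$, the growth estimate from Lemma~\ref{order of CAk}, and the standard construction of a distribution from a polynomially bounded bilateral Fourier sequence --- are routine.
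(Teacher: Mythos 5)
Your part~(ii) is essentially the paper's argument: use Lemma~\ref{order of CAk} to get $\norm{T^n e}^2=O(n^m)$, hence $|\langle\beta_j(T)e,T^ke\rangle|=O(k^{m/2})$, and construct the distribution from the polynomially bounded bilateral sequence. That is fine.

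Part~(i) has a genuine gap. The two positivity conditions you verify --- positivity of the form $\sum a_k\bar a_l s_{k,l}$ (from $\beta_m(T)\ge 0$) and positivity of $\sum a_k\bar a_l(s_{k,l}-s_{k+1,l+1})$ (from $-\beta_{m+1}(T)\ge 0$) --- say only that $M_z$ is a contraction on the auxiliary Hilbert space built from the form, and this is \emph{not} sufficient for a representing measure on $\overline{\D}$ to exist. The F\'ejer-type factorization you invoke is false: the cone generated by $\{|q|^2,(1-|z|^2)|q|^2 : q \text{ analytic}\}$ does not exhaust (even after uniform closure on $\overline{\D}$) the nonnegative polynomials, because $\sum_i|r_i|^2$ is subharmonic for analytic $r_i$ and the maximum principle rules out approximating, e.g., $(1-|z|^2)^2$ this way. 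More concretely, take $s_{k,l}=\delta_{k,l}c_k$ with $(c_k)$ positive, decreasing, but not completely monotone (say $c_0=c_1=1,\ c_k=0.9$ for $k\ge 2$); then both of your conditions hold, yet any representing measure on $\overline{\D}$ would have to be rotation-invariant and would force $(c_k)$ to be a Hausdorff moment sequence, a contradiction (indeed $L((1-|z|^2)^2)=c_0-2c_1+c_2<0$ while $(1-|z|^2)^2\ge 0$). What your argument misses is that the solvability of the $\overline{\D}$-moment problem for the full Hermitian moment matrix is equivalent to \emph{subnormality} of $M_z$ on the auxiliary space, which in turn needs the entire hierarchy $B_n(M_z)\ge 0$ for all $n\ge 0$, not just $n=0,1$. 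The paper supplies exactly this: it computes $\langle\beta_n(M_z)p,p\rangle_{\beta_m(T)}=\langle\beta_{m+n}(T)p(T)e,p(T)e\rangle$, so the hypothesis $(-1)^mB_n(T)\ge 0$ for \emph{all} $n\ge m$ translates into $B_n(M_z)\ge 0$ for all $n\ge 0$, and then Agler's theorem \cite{Aglerhyperc} gives a subnormal contraction whose minimal normal extension's scalar spectral measure is $\mu_m$. To repair your argument you must use the full family of inequalities $(-1)^nB_n(M_z)\ge 0$ (equivalently, nonnegativity of $L$ on all Hermitian squares in $\mathbb{C}[z,\bar z]$ and their $(1-|z|^2)$-multiples, not just analytic squares) and then either proceed via a genuine positivstellensatz for $\overline{\D}$ in the sense of Putinar/Schm\"udgen, or route through Agler's theorem as the paper does.
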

\begin{proof}
   $(\rm i)$ For any polynomial $p\in\mathbb C[z]$ with $p(z)= \sum_{j=0}^lc_jz^j,$ we  define $p(T)e:=\sum_{j=0}^lc_jT^j e.$ Since $T$ is a Dirichlet type operator of order $m$, we have $\beta_{m}(T)\geqslant 0.$  Consider the semi-inner product on $\mathbb C[z]$ given by
   \begin{align*}
       \ip{p,q}_{\beta_m(T)}:= \ip{\beta_m(T) p(T)e, q(T)e}_{\calH},\qquad p,q\in \mathbb C[z].
   \end{align*}
   Let $\mathcal W_T$ be the Hilbert space obtained as the completion of the quotient space $\mathbb C[z]/{\mathcal N_T}$ with respect to the above semi-inner product, where $\mathcal N_T=\ker (\beta_m(T)).$ As $T^*\beta_{k}(T)T-\beta_k(T)=\beta_{k+1}(T),$ by an induction argument it follows that 
   \begin{align*}
       \ip{\beta_n(M_z) p,p}_{\beta_m(T)}= \ip{\beta_{m+n}(T)p(T)e, p(T)e}_{\calH},\qquad p\in\mathbb C[z],\,n\geqslant 0.
   \end{align*}
   Since $T$ is of Dirichlet type operator of order $m,$ it follows that $B_{n}(M_z)\geqslant 0$ for every $n\in\mathbb Z_{\geqslant 0}.$ It follows from \cite[Theorem 3.1]{Aglerhyperc}, $M_z$ extends to a unique contractive subnormal operator on $\mathcal W_T,$ which we will also denote it by $M_z.$ Moreover, the equivalence class $[1]$ in  $\mathbb C[z]/{\mathcal N_T},$ determined by the the constant polynomial $1,$ is a cyclic vector for $M_z$  on $\mathcal W_T.$ Thus $M_z$ can be extended to a normal operator $N$ on a Hilbert space $\mathcal K_T$ which contains $\mathcal W_T$ as a closed subspace and $N$ is the minimal normal extension of $M_z$ so that $\|N\|=\|M_z\|\leqslant 1.$ Let $E$ be the spectral measure of $N$ and $\mu_m$ be the scalar spectral measure given by $\mu_m(A)= \ip{E(A)[1],[1]}_{\mathcal K_T}$ for every Borel set $A$ in the complex plane. Since $\|N\|\leqslant 1,$ the measure $\mu_m$ is supported on $\overline{\D}.$ It also follows that for every $p,q\in\mathbb C[z],$
   \begin{align*}
       \int_{\overline{\D}}p(z)\overline{q(z)} d\mu_m(z)&= \ip{p(N)[1],q(N)[1]}_{\mathcal K_T}\\
       &= \ip{p,q}_{\beta_m(T)}\\
     &=\ip{\beta_m(T)p(T)e,q(T)e}_{\mathcal H}.
   \end{align*}
In particular taking $p(z)= z^k$ and $q(z)=z^l,$ we obtain the required identity. 

(\rm ii) Consider the sequence $\{x_n\}_{n\geqslant 0}$ given by $x_n:=\norm{T^ne}^2$  for $n\in\mathbb Z_{\geqslant 0}.$ If $T$ is an operator of Dirichlet type of order $m$ for some $m\in\mathbb Z_{\geqslant 1},$ from Lemma \ref{order of CAk} it follows that  $x_n=O(n^{m}).$  Therefore we have that 
\begin{align*}
    |\ip{\beta_j(T)e, T^ne}|\leqslant \|\beta_j(T)\|\|T^ne\| \lesssim (1+n)^{\frac{m}{2}}, \qquad n\in \mathbb Z_{\geqslant 0}.
\end{align*} 
for each $j=0,1,\ldots,m-1.$ Hence there exist real distributions $\mu_j\in\mathcal D'$ satisfying 
\begin{equation*}
    \hat{\mu}_{j} \left( n \right) = \overline{\hat{\mu}_{j} \left( -n \right)} = \ip{\beta_{j} \left( T \right) e, T^{n}e}_{\calH} \qquad  n \in \mathbb Z_{\geqslant 0},
\end{equation*} for each $j=0,1,\ldots,m-1.$  This completes the proof of the Lemma.
\end{proof}

We consider the tuple $\overrightarrow{\mu} = \left( \mu_{0}, \mu_{1}, \ldots , \mu_{m} \right) \in (\mathcal D')^{m}\times M_+(\overline{\D})$ as obtained from Lemma \ref{existence of distributions}. We will show that
    \begin{equation}\label{norm formula}
	\norm{p\left( T \right) e}_{\calH}^{2}= \sum_{j=0}^{m} D_{\mu_{j} , j} \left( p \right),\qquad p\in \mathbb C[z],
    \end{equation}
where $e$ is the unit cyclic vector for $T.$ Let $\mathcal L$ be the subspace of $\calH$ given by $\mathcal L= \{p(T)e: p\in \mathbb C[z]\}.$ Let $\hat{L}$ be the backward shift operator on $\mathcal L$ defined by 
\begin{align*}
    \hat{L}\left (\sum\limits_{j=0}^nc_jT^je \right )= \sum\limits_{j=1}^nc_jT^{j-1}e,\qquad c_j\in\mathbb C.
\end{align*}
For any linear map $A$ on $ \mathcal L,$ we define the sesquilinear form $\ip{S(A)x,y}$ on $\mathcal L$ by
\begin{align*}
  \ip{S(A)x,y}:= \sum\limits_{j=1}^{\infty} \ip{A \hat{L}^jx,\hat{L}^jy},\qquad x,y\in \mathcal L.
\end{align*}
In order to establish \eqref{norm formula}, we need the following identities. 
\begin{lemma}\label{moment through diff}
Let $p\in\mathbb C[z].$ Then for any $r=0,1,\ldots,(m-1),$ we have
\begin{align*}
   D_{{\mu_r},0}(p) = \begin{cases}
       \ip{\Big(\beta_r(T)-S(\beta_{r+1}(T))\Big)p(T)e,p(T)e}, & \mbox{if\,\,}\,\, 0\leqslant r \leqslant m-1,\\
       \ip{\beta_m(T) p(T)e, p(T)e}, & \mbox{if\,\,}\,\, r =m.
   \end{cases}
\end{align*}
\end{lemma}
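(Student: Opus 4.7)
The plan is to verify the two cases of Lemma \ref{moment through diff} separately. The case $r=m$ is essentially immediate from the definitions, while the case $0 \le r \le m-1$ requires a careful telescoping expansion.

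For $r = m$, I would first reduce $D_{\mu_m,0}(p)$ to an integral against the measure $\mu_m$. Writing $p(z) = \sum_{j=0}^d c_j z^j$, the function $|p|^2$ is continuous on $\overline{\D}$, so a standard weak-$*$ fact for Poisson integrals of finite Borel measures on $\T$ gives $\int_\T |p(r\zeta)|^2 P_{\mu_{m,2}}(r\zeta)\,d\sigma(\zeta) \to \int_\T |p|^2\,d\mu_{m,2}$ as $r \to 1^-$. Combining this with the interior piece yields $D_{\mu_m,0}(p) = \int_{\overline{\D}} |p(z)|^2\,d\mu_m(z)$. Expanding $|p(z)|^2 = \sum_{j,l} c_j \overline{c_l}\, z^j \overline{z}^l$ and invoking the moment identity for $\mu_m$ from Lemma \ref{existence of distributions} immediately produces $\langle \beta_m(T) p(T) e, p(T) e\rangle$.

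For $0 \le r \le m-1$, the strategy is to compare the Fourier description of $D_{\mu_r,0}(p)$ with a telescoping expansion of $\langle \beta_r(T) p(T) e, p(T) e\rangle$. From \eqref{Formula for semi norm} applied with $k=0$,
\[
D_{\mu_r,0}(p) = \sum_{j,l=0}^d c_j \overline{c_l}\, \hat{\mu}_r(l-j).
\]
The main tool is the recursion $T^*\beta_r(T) T - \beta_r(T) = \beta_{r+1}(T)$, which gives for all $j,l \ge 1$
\[
\langle \beta_r(T) T^j e, T^l e\rangle - \langle \beta_r(T) T^{j-1} e, T^{l-1} e\rangle = \langle \beta_{r+1}(T) T^{j-1} e, T^{l-1} e\rangle.
\]
Iterating this $n := \min(j,l)$ times so that one index reduces to zero, and using the self-adjointness of $\beta_r(T)$ together with $\langle \beta_r(T) T^j e, e\rangle = \hat{\mu}_r(-j)$ and $\langle \beta_r(T) e, T^l e\rangle = \hat{\mu}_r(l)$, I obtain
\[
\langle \beta_r(T) T^j e, T^l e\rangle = \hat{\mu}_r(l-j) + \sum_{t=1}^{\min(j,l)} \langle \beta_{r+1}(T) T^{j-t} e, T^{l-t} e\rangle.
\]

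Multiplying by $c_j \overline{c_l}$ and summing over $j,l$, the first term aggregates to $D_{\mu_r,0}(p)$. For the second term, I would swap the order of summation: for each fixed $t \ge 1$,
\[
\sum_{j,l \ge t} c_j \overline{c_l} \langle \beta_{r+1}(T) T^{j-t} e, T^{l-t} e\rangle = \langle \beta_{r+1}(T) \hat{L}^t p(T) e, \hat{L}^t p(T) e\rangle,
\]
since $\hat{L}^t p(T) e = \sum_{j \ge t} c_j T^{j-t} e$ by the definition of the backward shift on $\mathcal L$. Summing over $t \ge 1$ (a finite sum, as $\hat{L}^t p(T) e = 0$ for $t > \deg p$) produces $\langle S(\beta_{r+1}(T)) p(T) e, p(T) e\rangle$, and rearranging yields the desired identity. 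The main obstacle is the index bookkeeping in the telescoping-to-$S$-sum passage, specifically recognizing that the triples $(j,l,t)$ with $1 \le t \le \min(j,l)$ reorganize correctly so that the inner sum over $\{j,l \ge t\}$ matches the backward-shifted vector $\hat{L}^t p(T) e$; all other ingredients (the recursion for $\beta_r(T)$, self-adjointness, and the formula \eqref{Formula for semi norm}) are either standard or already established.
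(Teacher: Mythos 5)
Your proof is correct and takes essentially the same approach as the paper: both arguments hinge on telescoping the recursion $T^*\beta_r(T)T-\beta_r(T)=\beta_{r+1}(T)$ against the moment identities of Lemma \ref{existence of distributions} and the Fourier formula \eqref{Formula for semi norm}, with the $r=m$ case reduced to the $\mu_m$-moment identity. The paper organizes the telescoping by expanding $\ip{S(\beta_{r+1}(T))T^ke,T^le}$ directly and peeling off $\hat\mu_r(l-k)$, whereas you expand $\ip{\beta_r(T)T^je,T^le}$ downward and then swap the $(j,l,t)$ summation order to reassemble $S(\beta_{r+1}(T))$, but this is only a cosmetic reorganization of the same computation.
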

\begin{proof}
Note that $\hat{L}e=0$ and $(\hat{L}T)(q)=q$ for every $q\in \mathbb C[z].$ Since $T^*\beta_{r}(T)T-\beta_r(T)=\beta_{r+1}(T),$  we find that  
\begin{align*}
        \ip{\Big (S(\beta_{r+1}(T))\Big) T^ke,T^le}  &=   \sum\limits_{j=1}^{k\wedge l} \ip{\beta_{r+1}(T) T^{k-j}e,T^{l-j}e}\\
        &= \sum\limits_{j=1}^{k\wedge l} \ip{\beta_{r}(T) T^{k-j+1}e,T^{l-j+1}e} - \sum\limits_{j=1}^{k\wedge l} \ip{\beta_{r}(T) T^{k-j}e,T^{l-j}e}\\
        &= \ip{\beta_r(T)T^{k}e,T^le}- \ip{\beta_r(T) T^{k- (k\wedge l)}, T^{l- (k\wedge l)}},\\
        &=\ip{\beta_r(T)T^{k}e,T^le}- \hat{\mu}_r(l-k),
    \end{align*}
    for each $r=0,1,\ldots,(m-1)$and $l,k\in\mathbb Z_{\geqslant 0.}$ Note that the last equality follows from Lemma \ref{existence of distributions} (ii). In view of the formula of $D_{\mu_r,0}(p)$ mentioned in \eqref{Formula for semi norm} we thus obtain that 
    \begin{align*}
         D_{\mu_r,0}(p)= \ip{\Big (\beta_r(T)-S(\beta_{r+1}(T))\Big) p(T)e, p(T)e},\qquad r=0,1,\ldots,(m-1).
    \end{align*} 
    From Lemma \ref{existence of distributions} (i), it also follows that 
    $D_{\mu_m,0}(p)= \ip{\beta_{m}(T) p(T)e, p(T)e}.$ This completes the proof of the lemma.
    \end{proof}
From \cite[Proposition 3.4]{Rydhe19}, we have that for any $\mu\in \mathcal D',p\in \mathbb C[z]$ and $k\in \mathbb Z_{\geqslant 0},$
\begin{align*}
    D_{\mu,k}(zp)-D_{\mu,k}(p) = \begin{cases}
        0, & \mbox{if}\,\,k=0,\\
        D_{\mu,k-1}(p), & \mbox{if}\,\,k\geqslant 1.
    \end{cases}
\end{align*}
In view of the above identity and using the fact $L^n(p)=0$ for $n>$ deg$(p),$ we obtain that for  any $\mu\in \mathcal D',p\in \mathbb C[z]$ and $k\in \mathbb Z_{\geqslant 0},$
 \begin{equation}\label{one step up for dist}
	\sum_{j=1}^{\infty} D_{\mu, k} \left( L^{j}p \right) = D_{\mu, k+1} (p),
    \end{equation}
see the arguments of Proposition \ref{one step up formula for measure} for a proof.  Using \eqref{one step up for dist} along with an induction argument, we find that for $\mu_r\in \mathcal D'$ as obtained in Lemma \ref{existence of distributions} (ii),
 \begin{align*}
    \ip{S^j\Big(\beta_r(T)- S(\beta_{r+1}(T))\Big) p(T)e,p(T)e}= D_{\mu_r,j}(p),\qquad p\in\mathbb C[z],\,j\in \mathbb Z_{\geqslant 0}.
 \end{align*} 
 This gives us that for any $p\in\mathbb C[z],$
 \begin{align*}
  \sum\limits_{r=0}^{m} D_{\mu_r,r}(p) &= \Bigg (\sum\limits_{r=0}^{m-1} \ip{S^r\Big(\beta_r(T)- S(\beta_{r+1}(T))\Big) p(T)e,p(T)e}\Bigg) + \ip{\beta_{m}(T) p(T)e, p(T)e} \\
  &= \norm{p(T)e}^2. 
 \end{align*}
  Since $\mathcal L$ is dense in $\mathcal H$ and $T$ is a bounded operator, it follows that   $\overrightarrow{\mu} = \left( \mu_{0}, \mu_{1}, \ldots , \mu_{m} \right)$ is an allowable $(m+1)$-tuple. Moreover this is a normalized allowable $(m+1)$-tuple as $e$ is a unit vector. The identity \eqref{norm formula} also shows that the map $U: \mathcal L \to \mathbb C[z]$ given by 
  \begin{align*}
     U(p(T)e):= p,\qquad p\in\mathbb C[z],
  \end{align*} defines an isometry from $\mathcal L$ into $\mathscr D_{\overrightarrow{\mu}}.$ Since $\mathcal L$ is dense in $\mathcal H$ and $\mathbb C[z]$ is dense in $\mathscr D_{\overrightarrow{\mu}},$ the map $U$ extends to an unitary map from $\mathcal H$ onto $\mathscr D_{\overrightarrow{\mu}}.$ Moreover it is straightforward to see that $M_zU=UT.$

For the proof of the last part of the Theorem \ref{Model Theorem}, let $T_j,$  be two cyclic Dirichlet type operator of order $m$ with unit cyclic vector $e_j$ for $j\in\{1,2\}.$ Let  $\overrightarrow{\mu}_j$ be the associated normalized allowable $(m+1)$-tuple for $j=1,2,$ as obtained in the first part of the Theorem. Suppose $V:\mathcal H_1\to \mathcal H_2$ be a unitary such that $VT_1=T_2V$ and $Ve_1=e_2.$ Then in view of the Lemma \ref{moment through diff}, it follows that $\overrightarrow{\mu}_1= \overrightarrow{\mu}_2.$ Conversely, if $\overrightarrow{\mu}_1= \overrightarrow{\mu}_2,$ from \eqref{norm formula}, it follows that $U:\mathcal H_1 \to \mathcal H_2$ given by $U(p(T_1)e)=p(T_2)e_2,$ for $p\in\mathbb C[z]$ defines a unitary from $\calH_1$ onto $\calH_2$ satisfying $UT_1=T_2U$ and $Ue_1=e_2.$
\end{proof}

\begin{remark}\label{positivity of measure}
Let $T\in \mathcal B(\mathcal H)$ be a cyclic Dirichlet type operator of order $m$ with cyclic unit vector $e$ and $T$ be unitarily equivalent to $M_z$ on $\mathscr D_{\overrightarrow{\mu}}$ where $\overrightarrow{\mu}=(\mu_0,\mu_1,\ldots,\mu_m)$ is the associated allowable normalized $(m+1)$-tuple as given in Theorem \ref{Model Theorem}. Then from Lemma \ref{moment through diff}, it follows that for each $r=0,1,\ldots,(m-1)$ the distribution $\mu_r$ is a non negative distribution on $\T$ (or equivalently $\mu_r$ is a non negative measure in $M_+(\mathbb T)$) if and only if $\ip{S(\beta_{r+1}(T))x,x}_{\calH}\leqslant \ip{\beta_r(T)x,x}_{\calH}$ for every $x\in \mathcal H.$ It also follows from equation \eqref{difference higher order} and Proposition \ref{thm:shift-completely-hyperstuff} that $\mu_m$ is supported on $\mathbb T$ if and only if $\beta_{m+1}(T)=0,$ that is, $T$ is a cyclic $(m+1)$-isometry.
\end{remark}

\begin{example}
    Let $\calH = \mathscr  D _{\alpha}$ as defined in equation \eqref{eq:D-alpha}, $e=1 \in \mathscr D_{\alpha}$ and $T=M_z \in \mathcal B(\mathscr D_{\alpha})$. By Remark \ref{another example}, it follows that $T$ is a Dirichlet type operator of order $m$ where $m$ is the unique integer satisfying $m-1 < \alpha \le m$. Here we describe the distributions of the circle $\mu_j$, $0\le j \le m-1$ and the measure on the closed unit disc $\mu_m$ as in Theorem \ref{Model Theorem}  for this operator $T$. 
    
    Set $x_n := (n+1)^{\alpha}$ and take $m \in \Z$ such that $m-1 < \alpha \le m$. Recall $\sigma$ denotes the normalized Lebesgue measure on $\T.$ We claim that 
\begin{equation*}
    \mu_j := \begin{cases}
        \left( \Delta ^j x_0 \right) \sigma, & 0\le j \le m-1, \\
        (m!) \sigma, & j=m, \, \alpha=m, \\
        \frac{(-1)^m}{\Gamma(-\alpha)} (1-\abs{z}^2)^{m} (-\log |z|^2)^{-\alpha-1} \, dA(z), & j=m,\,  m-1 < \alpha < m. 
    \end{cases}
\end{equation*}
It can be easily seen that 
\begin{equation*}
    \beta_j(T)z^k
    =\sum_{l=0}^j(-1)^{j-l}\binom{j}{l}T^{*l}T^l z^k
    =\sum_{l=0}^j(-1)^{j-l}\binom{j}{l}\frac{x_{k+l}}{x_k}z^k
    =\frac{\Delta^j x_k}{x_k}z^k.
\end{equation*}

We proceed to compute $\mu_j$ when $0\le j \le m-1$. By Lemma \ref{existence of distributions}, we have that
\begin{equation*}
    \hat{\mu}_{j} \left( k \right) = \overline{\hat{\mu}_{j} \left( -k \right)} = (\Delta^j x_0) \delta_{0, k}, \qquad k \in \Z_{\ge 0}.
\end{equation*}

By uniqueness of the Fourier coefficients of a distribution $\mathbb T$, we have that $ \mu_j = \left( \Delta ^j x_0 \right) \sigma$ for each $j=0,1,\ldots,m-1.$

We now consider the case $j=m$ . By \ref{existence of distributions}, we have 
\begin{equation*}
    \int_{\overline\D} z^k\overline z^{\ell}\,d\mu_m(z) =\ip{\beta_m(T)z^k,{z^{l}}}
=\frac{\Delta^m x_k}{x_k}\ip{z^k, z^{l}}
=\delta_{kl}\,\Delta^m x_k =  \delta_{kl}\,\Delta^m (k+1)^{\alpha}
\end{equation*}

Moreover, if $\alpha =m$, an inductive argument shows that $\Delta ^{m} (k+1)^m = m!$ and this shows that $\mu_m = (m!)\, \sigma$.

We now consider the case $m-1 < \alpha < m$. We use the following identity valid for every $C^m$ function $f$ on an interval containing $[x,x+m]$:
\begin{equation} 
\Delta^m f(x) =\int_{[0,1]^m} f^{(m)}(x+t_1+\cdots+t_m) \, dt_1 \cdots dt_m.
\label{eq:difference-to-integral}
\end{equation}
The proof of the identity is an application of Fundamental Theorem of Calculus and induction.

Applying \eqref{eq:difference-to-integral} to $f(x)=x^\alpha$ and $x=n+1$, we have
\begin{equation}
    \Delta^m (n+1)^\alpha
=\alpha(\alpha-1)\cdots(\alpha-m+1)
\int_{[0,1]^m}(n+1+t_1+\cdots+t_m)^{\alpha-m}\,dt_1 \, \ldots \, dt_m.
\label{eqn:delta-m-n+1-alpha}
\end{equation}

For a fixed positive number $s\in \R$, we have
\begin{equation*}
    \int_{0}^{\infty} e^{-su} u^{m-\alpha-1} \, du = \frac{\Gamma(m-\alpha)}{s^{m-\alpha}},
\end{equation*} 
where $\Gamma (z)$ denotes the Gamma function. Take $s:=n+1+t_1+\cdots + t_m$. Clearly $s > 0$ and
\begin{equation*}
    \left( n + 1 + t_1 + \cdots + t_m \right)^{\alpha-m} = \frac{1}{\Gamma(m-\alpha)} \int_{0}^{\infty} e^{-(n+1+t_1 + \cdots + t_m) u} u^{m-\alpha-1} du.
\end{equation*}

Substituting in the equation \eqref{eqn:delta-m-n+1-alpha}, we have
\begin{align*}
    \Delta^m (n+1)^{\alpha} &= \frac{\alpha(\alpha-1)\cdots(\alpha-m+1)}{\Gamma(m-\alpha)} \int\limits_{[0,1]^m} \int_{0}^{\infty} e^{-(n+1+t_1 + \cdots + t_m) u} u^{m-\alpha-1} \,du\, dt_1\, \ldots\, dt_m \\
    &= \frac{(-1)^m}{\Gamma(-\alpha)} \int_{0}^{\infty} \int\limits_{[0,1]^m} e^{-(n+1+t_1 + \cdots + t_m) u} u^{m-\alpha-1} \, dt_1\, \ldots\, dt_m \,du \\
    &= \frac{(-1)^m}{\Gamma(-\alpha)} \int_{0}^{\infty} e^{-(n+1)u} \left( \int_{0}^{1} e^{-tu} \,dt\right)^{m} u^{m-\alpha-1} \, du \\
    &= \frac{(-1)^m}{\Gamma (-\alpha)} \int_{0}^{\infty} e^{-(n+1)u} (1-e^{-u})^{m} u^{-\alpha-1} \, du.
\end{align*}

Substituting $t=e^{-u}$, we have
\begin{equation*}
    \Delta^m (n+1)^{\alpha} = \frac{(-1)^m}{\Gamma(-\alpha)} \int_{0}^{1} t^n (1-t)^m \left( -\log t\right)^{-\alpha -1} \, dt.
\end{equation*}

Define \begin{equation*}
    d\mu_m (z) = \frac{(-1)^m}{\Gamma(-\alpha)} (1-\abs{z}^2)^{m} (-\log |z|^2)^{-\alpha-1} \, dA(z)
\end{equation*}

We now show that
\begin{equation*}
    \int_{\D} z^k \overline{z}^l \, d\mu_m (z) = \Delta^m (k+1)^{\alpha} \delta_{k,l}.
\end{equation*}

Since $\mu_m$ is a radial measure, the conclusion is evident when $k \ne l$. We may assume $k=l$ where $k\in \Z_{\ge 0}$. Then 
\begin{align*}
    \int_{\D} \abs{z}^{2k} \, d\mu_m (z) &= \frac{(-1)^m}{\Gamma(-\alpha)} \int_{0}^{1} r^{2k} (1-r^2)^{m} (-\log r^2 )^{-\alpha -1 } (2r) dr \\
    &= \frac{(-1)^m}{\Gamma(-\alpha)} \int_{0}^{1}t^{k}(1-t)^m (-\log t)^{-\alpha-1}\, dt\\
    &= \Delta^{m}(k+1)^{\alpha}.
\end{align*}
This completes the proof of our claim.
\end{example}

 The question of finding a qualitative characterization of when a $m$-tuple $\overrightarrow{\mu} = \left( \mu_{0}, \mu_{1}, \ldots , \mu_{m-1} \right)\in (\mathcal D')^{m-1}\times M_+(\overline{\D})$ is allowable seems to be a bit challenging at the moment.  However to furnish further examples of ``allowable"  $m$-tuple $\overrightarrow{\mu}= (\mu_0,\mu_1,\ldots,\mu_{m-1})\in (\mathcal D')^{m-1}\times M_+(\overline{\D}),$ we state the following theorem, which extends \cite[Theorem 6.4]{Rydhe19}. In view of \cite[Lemma 5.8]{Rydhe19},  Proposition \ref{prop:sigma-vs-mu}, Proposition \ref{prop:boundedness-of-mu} mentioned here, the proof follows verbatim from the arguments presented in the aforementioned reference. Accordingly, we omit the details and refer the interested reader to \cite[Theorem 6.4]{Rydhe19} for a proof. In the following, for a distribution $\mu\in \mathcal D',$ we use the symbol $D\mu$ to denote the distribution in $\mathcal D'$ determined by $\widehat{D(\mu)}(k)= |k| \hat{\mu}(k),$ for $k\in \mathbb Z.$ 

\begin{theorem}
Let $m \ge 4$, $\overrightarrow{\mu}= (\mu_0,\dots,\mu_{m-2}) \in (\mathcal D')^{m-1},$ $\mu_{m-1}\in M_+(\overline{\D})\setminus \{0\},$ and 
$\nu \in \mathcal D'$. Assume:

    \begin{enumerate}[label=(\roman*)]
        \item $\nu \ge 0$ and $\abs{\hat f(0)}^2 \lesssim D_{\nu,0}(f)$ for $f\in \mathbb C[z],$ 
        \item If $0 \le n \le \frac{m-4}{2}$, then $\abs{\hat{\mu_n}(k)} \lesssim (1+|k|)^{(m-4)/2},$ for every $k \in \mathbb Z,$       
        \item If $m$ is odd and $n=\frac{m-3}{2}$, then $\mu_n=\sum_{j=0}^{\frac{m-3}{2}} D^j \nu_j,$ where each $\nu_j$ is a finite measure, 
        \item If $\frac{m-2}{2} \le n \le m-3$, then $\mu_n=\sum_{j=0}^{m-3-n} D^j \nu_j,$
        where each $\nu_j$ is a finite measure. In particular, $\mu_{m-2}=0$.
    \end{enumerate}
Then, for sufficiently large $C>0$, the tuple $\overrightarrow{\mu}= (\mu_0+C\nu,\mu_1,\dots,\mu_{m-2},C\mu_{m-1})$ is allowable.
\end{theorem}

 We note down another special class of normalized allowable $(m+1)$-tuple. Suppose each $\mu_j$ is a positive distribution in $\mathcal D',$ that is, $\mu_j \in M_+(\mathbb T)$ for $j=1,2,\ldots,m-1$ and $\mu_m\in M_+(\overline{\D}).$ Then in view of \cite[Proposition 5.7, Lemma 5.8]{Rydhe19} and the Remark \ref{zf vs f}, it follows that the $(m+1)$-tuple  $\overrightarrow{\mu} = \left( \sigma, \mu_{1}, \ldots ,\mu_{m} \right)$ is always a normalized allowable $(m+1)$ tuple. Moreover in this case $\mathscr D_{\overrightarrow{\mu}}$ can be described in terms of weighted Dirichlet type spaces in the following manner. 
 
 Consider $\mathscr H_{\overrightarrow{\mu}},$ the linear subspace of $\operatorname{Hol} (\D),$ defined by 
\begin{align*}
   \mathscr H_{\overrightarrow{\mu}}:= \bigcap\limits_{j=1}^m \mathscr D_{\mu_j,j} = \left \{ f \in \operatorname{Hol} (\D) \, : \, D_{\mu_j, j} (f) < \infty, \,\,\mbox{for every} \,\,j=1,2,\ldots,m\right \}.
 \end{align*}
Since $\mu_j\in M_+(\mathbb T)$ for each $j=1,2,\ldots,m-1,$ and $\mu_m\in M_+(\overline{\D}),$ we have $\mathscr D_{\mu_j,j}\subseteq H^2$ for every $j=1,2,\ldots,m.$ Consequently, $\|\cdot \|_{_{\overrightarrow{\mu}}}$ given by 
\begin{align*}
    \|f\|^2_{_{\overrightarrow{\mu}}}:= \|f\|_{H^2}^2+ \sum\limits_{j=1}^{m} D_{\mu_j,j}(f), \quad f\in  \mathscr H_{\overrightarrow{\mu}},
\end{align*}
defines a norm on $\mathscr H_{\overrightarrow{\mu}}.$ It follows that $\mathscr H_{\overrightarrow{\mu}}$ is a Hilbert space with respect to the norm $\|\cdot \|_{_{\overrightarrow{\mu}}}.$ In view of Proposition \ref{prop:dilations}, we also have  $\|f_r-f\|_{_{\overrightarrow{\mu}}} \to 0$ as $r\to 1^{-}$ for each $f\in\mathscr H_{\overrightarrow{\mu}}.$ For $0<r <1$ and $f\in \mathscr H_{\overrightarrow{\mu}},$ the $r$-dilation $f_r$ is holomorphic in a neighborhood of $\overline{\D}.$ Let $s_n(f_r)$ denotes the degree $n$ Taylor polynomial of $f_r$ around zero. Since for each $j\in \mathbb Z_{\geqslant 0},$ the polynomial $s_n(f_r^{(j)})$ converges uniformly to $f_r^{(j)}$ in a neighborhood of $\overline{\D},$ it follows that $\|s_n(f_r)-f_r\|_{_{\overrightarrow{\mu}}} \to 0$ as $n\to \infty.$ Thus we obtain that $\mathbb C[z]$ is dense $\mathscr H_{\overrightarrow{\mu}}$ and consequently we find that 
\begin{align*}
   \mathscr D_{\overrightarrow{\mu}}=\mathscr H_{\overrightarrow{\mu}},\qquad  \overrightarrow{\mu} = \left( \sigma, \mu_{1}, \ldots ,\mu_{m} \right) \in \left(M_+(\mathbb T)\right) ^m \times M_+(\overline{\D}).
\end{align*}
A bounded operator $T\in \mathcal B(\mathcal H)$ is called analytic if $\bigcap\limits_{n=1}^{\infty} T^n(\mathcal H)= \{0\}.$
As $\mathscr H_{\overrightarrow{\mu}}\subset \operatorname{Hol} (\D),$ the operator $M_z$ on $\mathscr H_{\overrightarrow{\mu}}$ turns out to be analytic. In view of Remark \ref{positivity of measure}, it follows that $M_z$ satisfies the operator inequality
\begin{align*}
    \beta_r(M_z)\geqslant \sum\limits_{j=1}^{\infty} {L^{*}}^j \beta_{r+1}(M_z) {L}^j,\qquad r=1,2,\ldots,m-1.
\end{align*}
The above inequality also characterize the operator $M_z$ on $\mathscr H_{\overrightarrow{\mu}}$ in a unique fashion. As a corollary to Theorem \ref{Model Theorem}, We note down the following model theorem for a class of analytic Dirichlet type operator of finite order which generalize \cite[Theorem 1.2]{GGR22} in a natural way. For a left invertible operator $T\in \mathcal B(\mathcal H),$ we use $L_T$ to denote the Moore-Penrose inverse of $T$ given by $L_T=(T^*T)^{-1}T^*.$
\begin{theorem}\label{analytic model}
    Let $m\in \mathbb Z_{\geqslant 1}$ and $T\in \mathcal B(\mathcal H)$ be an analytic, Dirichlet type operator of order $m$ satisfying 
    \begin{itemize}
        \item[\rm (i)] dim $(\ker T^*)=1$, 
        \item[\rm (ii)] $\beta_r(T)\geqslant \sum\limits_{j=1}^{\infty} {L_T^{*}}^j \beta_{r+1}(T) {L_T}^j,$ for each $r=1,2,\ldots,m-1.$
    \end{itemize}
    Then there exists a unique allowable $(m+1)$-tuple  $\overrightarrow{\mu} = \left( \sigma, \mu_{1}, \ldots ,\mu_{m} \right)$ in $\left(M_+(\mathbb T)\right) ^m \times M_+(\overline{\D})$ such that $T$ is unitarily equivalent to the operator $M_z$ on $\mathscr H_{\overrightarrow{\mu}}.$
\end{theorem}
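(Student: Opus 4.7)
The plan is to derive this theorem as a specialization of Theorem \ref{Model Theorem}. Observe first that condition (ii) tacitly requires $T$ to be left-invertible (so that $L_T=(T^*T)^{-1}T^*$ is bounded on $\mathcal H$). Since $\dim(\ker T^*)=1,$ fix a unit vector $e \in \ker T^*$ (unique up to a unimodular scalar). Analyticity of $T,$ left-invertibility, and the one-dimensional cokernel together imply, via the standard Wold-type decomposition for left-invertible analytic operators, that $\{T^n e : n \in \mathbb Z_{\geqslant 0}\}$ has dense linear span in $\mathcal H,$ so $e$ is cyclic for $T.$ I would then apply Theorem \ref{Model Theorem} to the pair $(T,e)$ to produce a normalized allowable $(m+1)$-tuple $\overrightarrow{\mu}=(\mu_0,\ldots,\mu_m)\in (\mathcal D')^m\times M_+(\overline{\D})$ and a unitary $U:\mathcal H\to \mathscr D_{\overrightarrow{\mu}}$ with $UT=M_zU$ and $Ue=1.$

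Next I would verify that $\overrightarrow{\mu}$ has the prescribed form $(\sigma,\mu_1,\ldots,\mu_m)$ with $\mu_r\in M_+(\mathbb T)$ for $1\leqslant r\leqslant m-1.$ The membership $\mu_m\in M_+(\overline{\D})$ is automatic from Lemma \ref{existence of distributions}. The identity $\mu_0=\sigma$ follows directly from the moment formula $\hat{\mu}_{0}(k)=\langle e,T^k e\rangle_{\mathcal H}$ together with $e\in \ker T^*,$ which gives $\hat{\mu}_{0}(0)=1$ and $\hat{\mu}_{0}(k)=\langle T^{*k}e,e\rangle_{\mathcal H}=0$ for $k\geqslant 1.$ For the positivity of $\mu_r$ with $1\leqslant r\leqslant m-1,$ I would invoke Remark \ref{positivity of measure}, which reduces $\mu_r\in M_+(\mathbb T)$ to the inequality
\begin{equation*}
\langle S(\beta_{r+1}(T))x,x\rangle_{\mathcal H}\leqslant \langle \beta_r(T)x,x\rangle_{\mathcal H},\qquad x\in \mathcal H.
\end{equation*}
The crux is then to identify $S(A)$ with $\sum_{j\geqslant 1} L_T^{*j} A L_T^j.$ The backward shift $\hat L$ of Section 5 is defined on $\mathcal L=\{p(T)e:p\in \mathbb C[z]\}$ by $\hat L(T^n e)=T^{n-1}e,$ and since $L_T T=I$ on $\mathcal H$ while $L_T e=0$ (because $e\in \ker T^*$), one has $\hat L=L_T|_{\mathcal L}.$ Hence $\hat L$ extends to the bounded operator $L_T$ on all of $\mathcal H,$ and, as analyticity of $T$ forces $L_T^j x\to 0$ for each $x\in \mathcal H,$ the defining series for $S(A)$ converges weakly and realizes $\sum_{j\geqslant 1} L_T^{*j} A L_T^j.$ Condition (ii) is therefore exactly the inequality required, yielding $\mu_r\in M_+(\mathbb T)$ for each such $r.$

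Assembling these pieces gives $\overrightarrow{\mu}=(\sigma,\mu_1,\ldots,\mu_m)\in (M_+(\mathbb T))^m\times M_+(\overline{\D}),$ and the identification $\mathscr D_{\overrightarrow{\mu}}=\mathscr H_{\overrightarrow{\mu}}$ established in the paragraph preceding the theorem supplies the desired unitary equivalence. For uniqueness, if $\overrightarrow{\nu}$ is another allowable tuple of the same form with $T$ unitarily equivalent to $M_z$ on $\mathscr H_{\overrightarrow{\nu}},$ then the image of any unit vector in $\ker T^*$ under this unitary lies in $\ker M_z^*=\mathbb C\cdot 1$ (a computation via the formula \eqref{Formula for semi norm} gives $M_z^*1=0$ whenever $\nu_0=\sigma$); after phase adjustment we may take this image to be $1,$ and then Lemma \ref{existence of distributions} determines the moments of each $\nu_j$ from $T$ alone. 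Since these moment identities are phase-independent in $e$ (both sides absorb the factor $|\alpha|^2=1$ under $e\mapsto \alpha e$), one concludes $\overrightarrow{\nu}=\overrightarrow{\mu}.$ The step I expect to require the most care is the rigorous identification $S(A)=\sum L_T^{*j} A L_T^j$ as a quadratic form on $\mathcal H$ with proper convergence, which hinges on both the analyticity of $T$ and the bounded extension of $\hat L$ to $L_T.$
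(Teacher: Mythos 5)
Your reduction to Theorem \ref{Model Theorem} and the subsequent derivations --- $\mu_0=\sigma$ from $e\in\ker T^*$, positivity of $\mu_r$ via Remark \ref{positivity of measure}, and the identification $\mathscr D_{\overrightarrow{\mu}}=\mathscr H_{\overrightarrow{\mu}}$ --- all track the paper's proof. But there is a genuine gap at the very first step, where you assert that ``analyticity of $T$, left-invertibility, and the one-dimensional cokernel together imply, via the standard Wold-type decomposition for left-invertible analytic operators,'' that the unit vector $e\in\ker T^*$ is cyclic. Shimorin's Wold-type theorem for left-invertible analytic operators represents $T$ as a shift on a reproducing-kernel Hilbert space of $\ker T^*$-valued analytic functions, but it does not, by itself, yield the wandering subspace property $\mathcal H=\bigvee_{n\geqslant 0}T^n(\ker T^*)$, which is what cyclicity of $e$ requires when $\dim\ker T^*=1$. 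That property is not a consequence of analyticity and left-invertibility alone; it must come from the Dirichlet-type hypothesis. The paper makes this precise: from ``Dirichlet type of order $m$'' it first extracts $\beta_{m+1}(T)\leqslant 0$ (so $T$ is $(m+1)$-concave), and then appeals to \cite[Theorem~1.3]{GGR22}, whose conclusion is exactly that such an analytic $(m+1)$-concave operator with $\dim\ker T^*=1$ possesses a unit cyclic vector in $\ker T^*$. Your argument never invokes the order-$m$ Dirichlet-type hypothesis at this step, so the cyclicity claim is unjustified as written; you need the concavity and the corresponding wandering-subspace theorem.

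Two smaller remarks. For the positivity of $\mu_r$ you worry about realizing $\sum_{j\geqslant 1}L_T^{*j}\beta_{r+1}(T)L_T^j$ as a convergent quadratic form on all of $\mathcal H$; this is unnecessary, since Remark \ref{positivity of measure} only needs the inequality tested against vectors in $\mathcal L=\{p(T)e:p\in\mathbb C[z]\}$, where the series terminates (as $\hat L^j(p(T)e)=0$ for $j>\deg p$) and $\hat L$ coincides with $L_T$, so hypothesis (ii) restricted to $\mathcal L$ already suffices. For uniqueness, your argument via recomputing moments through Lemma \ref{existence of distributions} works, but the paper gets there more directly: any intertwining unitary sends $1$ to a unimodular scalar multiple of $1$ (since $\ker M_z^*=\mathbb C\cdot 1$ in both spaces), and after absorbing the phase one invokes the uniqueness clause of Theorem \ref{Model Theorem} outright.
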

\begin{proof}
    Since $T$ is a Dirichlet type operator of order $m,$ we have that $T$ is $(m+1)$-concave, that is, $\beta_{m+1}(T)\leqslant 0.$ By our assumption on $T,$ it follows from \cite[Theorem 1.3]{GGR22} that $T$ admits a unit cyclic vector, say $e,$ in $\ker T^*.$ Therefore by Theorem \ref{Model Theorem}, there exists an allowable $(m+1)$-tuple  $\overrightarrow{\mu} = \left( \mu_0, \mu_{1}, \ldots ,\mu_{m} \right)$ in $\left (\mathcal D'\right) ^m \times M_+(\overline{\D})$ such that $T$ is unitarily equivalent to the operator $M_z$ on $\mathscr D_{\overrightarrow{\mu}}.$ Since $e\in \ker T^*,$ we have $\hat{\mu_0}(k)= \delta_{0,k},$ for every $k\in \mathbb Z.$
    
    This gives us $\mu_0=\sigma,$ the normalized Lebesgue measure on $\T.$ Also note that $\hat{L}(p(T)e)= L_T(p(T)e)$ for every $p\in\mathbb C[z].$ Therefore it follows from Remark \ref{positivity of measure}, that $\mu_j\in M_+(\mathbb T)$ for every $j=1,2,\ldots,m-1.$ Thus we obtain that $\mathscr D_{\overrightarrow{\mu}}= \mathscr H_{\overrightarrow{\mu}}.$
    
    For the uniqueness part assume that $M_z$ on $\mathscr H_{\overrightarrow{\mu}}$ is unitarily equivalent to $M_z$ on $\mathscr H_{\overrightarrow{\nu}}$ for some other allowable tuple  $\overrightarrow{\nu} = \left( \sigma, \nu_{1}, \ldots ,\nu_{m} \right)$ in  $\left(M_+(\mathbb T)\right) ^m \times M_+(\overline{\D}).$ Let $U: \mathscr H_{\overrightarrow{\mu}} \to \mathscr H_{\overrightarrow{\nu}}$ be the associated unitary such that $UM_z=M_zU.$ Note that $\ker M_z^*=\mathbb C$ in both the spaces $\mathscr H_{\overrightarrow{\mu}}$ and $\mathscr H_{\overrightarrow{\nu}},$ therefore we must have $U(1)= \xi$ for some $\xi\in \mathbb T.$ Therefore $V=\bar{\xi}U$ is also an unitary from $\mathscr H_{\overrightarrow{\mu}}$ onto $\mathscr H_{\overrightarrow{\nu}}$ such that $V1=1$ and $VM_z=M_zV.$ By the uniqueness part of Theorem \ref{Model Theorem}, we then obtain that $\overrightarrow{\mu}=\overrightarrow{\nu}.$
\end{proof}
The uniqueness of the $(m+1)$-tuple  $\overrightarrow{\mu} = \left( \sigma, \mu_{1}, \ldots ,\mu_{m} \right) \in \left(M_+(\mathbb T)\right) ^m \times M_+(\overline{\D}),$ in Theorem \ref{analytic model} suggests that the operator $M_z$ on $\mathscr H_{\overrightarrow{\mu}}$ serves as a canonical model for a class of cyclic, analytic, Dirichlet type operator of finite order on a Hilbert space. The uniqueness of this model can also be described in another way. For an $(m+1)$-tuple  $\overrightarrow{\mu} = \left( \sigma, \mu_{1}, \ldots ,\mu_{m} \right) \in \left(M_+(\mathbb T)\right) ^m \times M_+(\overline{\D}),$ the associated Hilbert space  $\mathscr H_{\overrightarrow{\mu}}\subseteq H^2$ and we have $\|f\|_{H^2} \leqslant \|f\|_{\overrightarrow{\mu}}.$ Therefore it follows that $\mathscr H_{\overrightarrow{\mu}}$ is a reproducing kernel Hilbert space on $\mathbb D$, that is, the map $ev_z:\mathscr H_{\overrightarrow{\mu}} \to \mathbb C$ given by $ev_z(f)=f(z)$ is continuous for each $z\in \D,$ see \cite{zbMATH06562440} for basic properties of reproducing kernel Hilbert spaces. Moreover, the kernel $K(z,w)$ associated to $\mathscr H_{\overrightarrow{\mu}}$ turns out to be normalized, that is, $K(\cdot,0)=1,$ or equivalently, 
\begin{align*}
\ip{f,1}_{\overrightarrow{\mu}}=f(0),\qquad f\in \mathscr H_{\overrightarrow{\mu}}.
\end{align*}
This shows that the model for the class of analytic Dirichlet type operator as described in Theorem \ref{analytic model} is a canonical model and coincides with the model described by Shimorin in \cite[Page~154]{zbMATH01572594} for an arbitrary left invertible analytic operator on a Hilbert spaces, see also \cite[Lemma 4.8]{zbMATH03933844} for the uniqueness of the normalized kernel among all equivalent reproducing kernels. 

\section{Concluding Remarks}

The model theorem of Aleman for cyclic, analytic, Dirichlet type operators of order 1 provides a Beurling-type description of every closed $M_z$ invariant subspace of $D(\mu)$ for $\mu\in M_+(\overline{\D}),$ see \cite[IV, Theorem 4.9]{1993multiplication}. It is therefore natural to ask whether an analogous Beurling-type characterization can be obtained for all closed $M_z$-invariant subspaces of $\mathscr D_{\overrightarrow{\mu}},$ where ${\overrightarrow{\mu}}$ is an allowable tuple. The model theorem for cyclic Dirichlet-type operators of finite order developed in this work provides a systematic framework and serves as a foundation for further developments in this direction.

In this work, we have focused on higher order weighted Dirichlet-type integrals $D_{w,k}(f)$ with weights $w$ belonging to a specific subclass of positive $k$-superharmonic functions on $\D$, namely those of the form $w=U_{\mu},$ where $\mu\in M_+(\overline{\D}).$ This restriction is motivated by our aim to model a particular class of operators, namely Dirichlet-type operators of finite order. Nevertheless, it is of independent interest to understand which aspects of the theory persist for more general positive $k$-superharmonic weights. 

It is worth noting that every positive $k$-superharmonic function $w$ admits a decomposition of the form $w(z) = S_{\mu,k}(z) + v(z),$  where $S_{\mu,k}$ is a ``pure" $k$-superharmonic component (in the sense that it carries no $k$-harmonic part), and $v$ is $k$-harmonic function on $\D$. Indeed, if we define $\mu(z)= (1-|z|^2)^k (-\triangle_z)^k w(z)$ (in the distributional sense), then it follows from Proposition \ref{Basic properties of Green's function} (f), that 
\begin{align*}
   (-\triangle_z)^k S_{\mu,k}(z)= \frac{d\mu(z)}{(1-|z|^2)^k}= (-\triangle)^k w(z). 
\end{align*}
Thus $w(z)-S_{\mu,k}(z)$ defines a $k$-harmonic function on $\D.$ This decomposition suggests a natural direction for further investigation, namely the study of weighted Dirichlet-type integrals corresponding to a $k$-harmonic weights. Furthermore, by the Almansi decomposition, any $k$-harmonic function $v$ can be expressed as $v(z) = \sum_{j=0}^{k-1} (1-|z|^2)^j h_j(z),$
where each $h_j$ is harmonic in $\mathbb{D}.$ This representation naturally leads to the consideration of weights of the form
$w(z) = (1-|z|^2)^j h(z),$ with $h$ harmonic, as fundamental building blocks in the study of general $k$-superharmonic weights.

In this direction E. Rydhe \cite[Sec. 5]{Rydhe19} studied the weighted Dirichlet integral $D_{w,k}(f),$ with $w(z)=(1-|z|^2)^{k-1}P_{\nu}(z),$ where $\nu\in \mathcal D'.$ These weights play a crucial role in the description of model spaces associated with cyclic $m$-isometric operators. A systematic study of weighted Dirichlet-type integrals $D_{w,k}(f)$ corresponding to weights of the form $w(z) = (1-|z|^2)^j h(z),$ , with $h$ harmonic and $j<k-1,$ remains an interesting direction for future research and is expected to shed further light on the structure of higher-order Dirichlet-type operators and their invariant subspace structure. 

\section*{Declarations}
\subsection*{Funding} Ashish Kujur is supported through the Senior Research Fellowship of the Council of Scientific and Industrial Research (CSIR), India, (Ref. No. 09/0997(18166)/2024-EMR-I).

\subsection*{Competing interests} The authors declare that they have no competing interests.
\subsection*{Availability of data} No new data were created or analyzed in this study. Data sharing is not applicable.

\bibliographystyle{amsplain}
\nocite{*}
\bibliography{main}

\end{document}